\documentclass[12pt]{amsart}
\usepackage{amsmath, amssymb, amsfonts}
\usepackage[all]{xypic}
\usepackage[pdftex]{graphicx}
\usepackage{stmaryrd}


\paperheight=297mm
\paperwidth=210mm
\setlength{\oddsidemargin}{0pt}
\setlength{\evensidemargin}{0pt} \setlength{\headheight}{12pt}
\setlength{\footskip}{36pt}
\setlength{\hoffset}{0in}\setlength{\voffset}{-24pt}
\setlength{\topmargin}{0pt}
\setlength{\headsep}{12pt} \setlength{\marginparwidth}{0pt}
\setlength{\marginparpush}{0pt} \setlength{\textwidth}{210mm}
\addtolength{\textwidth}{-2in} \setlength{\textheight}{297mm}
\addtolength{\textheight}{-24pt}
\addtolength{\textheight}{-2in}


\theoremstyle{plain}
\newtheorem{theorem}{Theorem}[section]
\newtheorem{lemma}[theorem]{Lemma}
\newtheorem{corollary}[theorem]{Corollary}
\newtheorem{prop}[theorem]{Proposition}

\newtheorem{conj}[theorem]{Conjecture}

\theoremstyle{remark}

\newtheorem{remark}[theorem]{Remark}

\newtheorem{example}[theorem]{Example}

\newtheorem*{note*}{Note}
\newtheorem*{remark*}{Remark}
\newtheorem*{example*}{Example}

\theoremstyle{definition}
\newtheorem*{definition*}{Definition}
\newtheorem*{hypothesis*}{Hypothesis}
\newtheorem*{assumptions*}{Assumptions}
\newtheorem{definition}[theorem]{Definition}


\newcommand{\Z}{\mathbb{Z}}

\newcommand{\Q}{\mathbb{Q}}
\newcommand{\C}{\mathbb{C}}
\newcommand{\N}{\mathbb{N}}
\newcommand{\F}{\mathbb{F}}
\newcommand{\Aff}{\mathrm{Aff}}

\newcommand{\Ann}{\mathrm{Ann}}
\newcommand{\Aut}{\mathrm{Aut}}
\newcommand{\Gal}{\mathrm{Gal}}

\newcommand{\GL}{\mathrm{GL}}

\newcommand{\cl}{\mathrm{cl}}

\newcommand{\nr}{\mathrm{nr}}

\newcommand{\Hom}{\mathrm{Hom}}

\newcommand{\res}{\mathrm{res}}

\newcommand{\Irr}{\mathrm{Irr}}
\newcommand{\ind}{\mathrm{ind}}
\newcommand{\PMod}{\mathrm{PMod}}

\newcommand{\Spec}{\mathrm{Spec}}
\newcommand{\ram}{\mathrm{ram}}
\newcommand{\ab}{\mathrm{ab}}
\newcommand{\Br}{\mathrm{Br}}

\numberwithin{equation}{section}


\newcommand{\Fitt}{\mathrm{Fitt}}
\newcommand{\mal}{^{\times}}
\newcommand{\et}{\mathrm{\acute{e}t}}
\newcommand{\fl}{\mathrm{fl}}

\newcommand{\perf}{\mathrm{perf}}
\newcommand{\tor}{_{\mathrm{tor}}}
\newcommand{\Det}{\mathrm{Det}}
\newcommand{\aug}{\mathrm{aug}}

\newcommand{\ord}{\mathrm{ord}}

\newcommand{\Ext}{\mathrm{Ext}}

\newcommand{\Hyp}{\mathrm{Hyp}}

\title[On the non-abelian Brumer--Stark conjecture and the EIMC]{On the non-abelian Brumer--Stark conjecture\\ and the equivariant Iwasawa main conjecture}

\author{Henri Johnston}
\address{
Department of Mathematics\\
University of Exeter\\
Exeter\\
EX4 4QF\\
U.K.
}
\email{H.Johnston@exeter.ac.uk}
\urladdr{http://emps.exeter.ac.uk/mathematics/staff/hj241}

\author{Andreas Nickel}
\address{Universit\"{a}t Duisburg--Essen\\
    Fakult\"{a}t f\"{u}r Mathematik\\
    Thea-Leymann-Str. 9\\
    45127 Essen\\
    Germany}
\email{andreas.nickel@uni-due.de}
\urladdr{https://www.uni-due.de/$\sim$hm0251/english.html}

\subjclass[2010]{11R23, 11R42}
\keywords{Iwasawa main conjecture, Brumer's conjecture, Stark's conjectures, equivariant $L$-values, class groups, annihilation}
\date{Version of 1st August 2018}
\begin{document}

\maketitle

\begin{abstract}
We show that for an odd prime $p$, the $p$-primary parts of refinements of
the (imprimitive) non-abelian Brumer and Brumer--Stark conjectures are
implied by the equivariant Iwasawa main conjecture (EIMC) for totally real fields.
Crucially, this result does not depend on the vanishing of the relevant Iwasawa $\mu$-invariant.
In combination with the authors' previous work on the EIMC, 
this leads to unconditional proofs of the non-abelian Brumer and Brumer--Stark conjectures in many new cases.
\end{abstract}


\section{Introduction}

Let $K$ be a totally real number field and let $L$ be a CM field such that $L/K$ is a finite Galois extension; in this article,
such an extension will be called a (finite Galois) CM-extension. Let $G=\Gal(L/K)$.
To each finite set $S$ of places of $K$ containing all the archimedean places, one can associate a so-called `Stickelberger element'
$\theta_{S}(L/K)$ in the centre of the complex group algebra $\C[G]$. 
This element is constructed from values at $s=0$ of $S$-truncated Artin $L$-functions attached to the complex characters of $G$.
Let $\mu_{L}$ and $\cl_{L}$ denote the roots of unity and the class group of $L$, respectively.
Assume further that $S$ contains the set of all finite primes of $K$ that ramify in $L/K$.

Now suppose that $G$ is abelian.
It was independently shown in \cite{MR524276, MR579702, MR525346} that we have the containment
\[
\Ann_{\Z[G]} (\mu_{L}) \theta_{S}(L/K) \subseteq \Z[G].
\]
Moreover, Brumer's conjecture asserts that in fact 
\[
\Ann_{\Z[G]} (\mu_{L}) \theta_{S}(L/K) \subseteq \Ann_{\Z[G]}(\cl_{L}).
\]
The Brumer--Stark conjecture is a refinement of Brumer's conjecture that not only asserts that the class of a given ideal 
is annihilated in $\cl_{L}$, so it becomes a principal ideal, but also gives information about a generator of that ideal. 

In the case that $G$ is non-abelian, the second named author \cite{MR2976321} formulated generalisations of 
the Brumer and Brumer--Stark conjectures and of the so-called strong Brumer--Stark property. 
(Independently, Burns \cite{MR2845620} formulated non-abelian versions of the Brumer and Brumer--Stark conjectures in even greater generality.)
The extension $L/K$ satisfies the strong Brumer--Stark property if certain Stickelberger elements are contained in
the (non-commutative) Fitting invariants of corresponding ray class groups. 
It is important to note that this property does not hold in general, even in the case that $G$ is abelian,
as follows from results of Greither and Kurihara \cite{MR2443336}. 
If this property does hold, however, it also implies the validity of the (non-abelian) Brumer and 
Brumer--Stark conjectures.
The main result of this article is that for an odd prime $p$,
the relevant case of the equivariant Iwasawa main conjecture (EIMC) for totally real fields implies 
the $p$-primary part of a dual version of the strong Brumer--Stark property under the hypotheses
that $S$ contains all the $p$-adic places of $K$ and that a certain identity between complex and $p$-adic Artin $L$-functions at $s=0$ holds. 
This identity is conjecturally always true and, in particular, is satisfied when $G$ is monomial 
(i.e.\ every complex irreducible character of $G$ is induced from a one-dimensional character of a subgroup);
moreover, every metabelian or supersoluble finite group is monomial.

When the relevant classical Iwasawa $\mu$-invariant vanishes, the above result on the dual version 
of the strong Brumer--Stark property has been already established by the second named author \cite{MR3072281};
this result was in turn a non-abelian generalisation of work of Greither and Popescu \cite{MR3383600}.
(A weaker version of the result of  \cite{MR3383600} specialised to the setting of Brumer's conjecture was previously shown by Nguyen Quang Do \cite{MR2211312}.)
The main reason for the assumption of the vanishing of the $\mu$-invariant in \cite{MR3383600, MR3072281} is, of course, 
to ensure the validity of the EIMC. 
However, both articles use a version of the EIMC involving the Tate module of a certain Iwasawa-theoretic abstract
$1$-motive, which requires the vanishing of $\mu$ even for its formulation. This formulation is inspired by Deligne's theory
of $1$-motives \cite{MR0498552} and previous work of Greither and Popescu \cite{MR2899958}
on the Galois module structure of $p$-adic realisations of Picard $1$-motives.
Our new approach is different to, though partly inspired by, the approaches in \cite{MR3383600, MR3072281}.
More precisely, we reinterpret certain well-known exact sequences involving ray class groups in terms of
\'{e}tale and flat cohomology. Taking direct limits along the cyclotomic $\Z_{p}$-extension of $L$,
this allows us to establish a concrete link between the canonical complex
occurring in the EIMC and certain ray class groups. The theory of non-commutative Fitting invariants
then plays a crucial role in the Iwasawa co-descent.\\

This article is organised as follows. 
In \S \ref{sec:algebraic-prelims} we review some algebraic background material.
In particular, we discuss non-commutative Fitting invariants, which were introduced by the second named author in \cite{MR2609173} 
and were further developed by both the present authors in \cite{MR3092262}. 
In \S \ref{sec:Brumer--Stark} we recall the statements of the non-abelian Brumer and Brumer--Stark conjectures and show that a dual
version of the strong Brumer--Stark property implies both of these conjectures.
Then in \S \ref{sec:EIMC} we recall a reformulation of the EIMC introduced in \cite{MR3749195}.
The material presented up until this point then allows us to state the main theorem of this article (discussed above) in 
\S \ref{sec:statement-of-main-theorem-and-corollary}.
The next three sections are then devoted to the proof of this result. 
First we present further auxiliary results on Iwasawa algebras and the EIMC in 
\S \ref{subsec:iwasawa-algs-and-mcs-revisited}.
We then complete the proof in \S \ref{sec:proof-finite-level} and \S \ref{sec:proof-infinite-level} 
by working with complexes at the finite and infinite levels, respectively.
In \S \ref{sec:hybrid-and-frobenius} we recall the notion of hybrid $p$-adic group rings introduced in \cite{MR3461042}.
This notion was further developed in \cite{MR3749195} where it played a key role in obtaining the first unconditional proofs of the EIMC in cases where the vanishing of the relevant $\mu$-invariant is not known.
Finally, in \S \ref{sec:unconditional} we combine these results on the EIMC with the 
main result of this article to give unconditional proofs of the non-abelian Brumer and Brumer--Stark 
conjectures in many new cases. 

\subsection*{Acknowledgements}
It is a pleasure to thank Lennart Gehrmann, Cornelius Greither, Andreas Langer, Cristian Popescu, 
J\"urgen Ritter, Al Weiss and Malte Witte for helpful discussions and correspondence.
The first named author acknowledges financial support provided by EPSRC First Grant EP/N005716/1 `Equivariant Conjectures in Arithmetic'.
The second named author acknowledges financial support provided by the DFG within the Collaborative Research Center 701
`Spectral Structures and Topological Methods in Mathematics'.

\subsection*{Notation and conventions}
All rings are assumed to have an identity element and all modules are assumed
to be left modules unless otherwise  stated. We fix the following notation:

\medskip

\begin{tabular}{ll}
$R^{\times}$ & the group of units of a ring $R$\\
$\zeta(R)$ & the centre of a ring $R$\\
$\Ann_{R}(M)$ & the annihilator of the $R$-module $M$\\
$M_{m \times n} (R)$ & the set of all $m \times n$ matrices with entries in a ring $R$\\
$\zeta_{n}$ & a primitive $n$th root of unity\\
$K_{\infty}$ & the cyclotomic $\Z_{p}$-extension of the number field $K$\\
$\mu_{K}$ & the roots of unity of a field $K$\\
$\cl_{K}$ & the class group of a number field $K$ \\
$K^{\mathrm{c}}$ & an algebraic closure of a field $K$ \\
$K^{+}$ & the maximal totally real subfield of a field $K$ embeddable into $\C$\\
$\Irr_{F}(G)$ & the set of $F$-irreducible characters of the (pro)-finite group $G$\\
& (with open kernel) where $F$ is a field of characteristic $0$\\
$\check{\chi}$ & the character contragredient to $\chi$
\end{tabular}

\section{Algebraic Preliminaries}\label{sec:algebraic-prelims}

\subsection{Algebraic $K$-theory}\label{subsec:K-theory}
Let $R$ be a noetherian integral domain with field of fractions $E$.
Let $A$ be a finite-dimensional semisimple $E$-algebra and let $\mathfrak{A}$ be an $R$-order in $A$.
Let $\PMod(\mathfrak{A})$ denote the category of finitely generated projective (left) $\mathfrak{A}$-modules.
We write $K_{0}(\mathfrak{A})$ for the Grothendieck group of $\PMod(\mathfrak{A})$ (see \cite[\S 38]{MR892316})
and $K_{1}(\mathfrak{A})$ for the Whitehead group (see \cite[\S 40]{MR892316}).
Let $K_{0}(\mathfrak{A}, A)$ denote the relative algebraic $K$-group associated to the ring homomorphism
$\mathfrak{A} \hookrightarrow A$.
We recall that $K_{0}(\mathfrak{A}, A)$ is an abelian group with generators $[X,g,Y]$ where
$X$ and $Y$ are finitely generated projective $\mathfrak{A}$-modules
and $g:E \otimes_{R} X \rightarrow E \otimes_{R} Y$ is an isomorphism of $A$-modules;
for a full description in terms of generators and relations, we refer the reader to \cite[p.\ 215]{MR0245634}.
Moreover, there is a long exact sequence of relative $K$-theory (see \cite[Chapter 15]{MR0245634})
\begin{equation}\label{eqn:long-exact-seq}
K_{1}(\mathfrak{A}) \longrightarrow K_{1}(A) \stackrel{\partial}{\longrightarrow} K_{0}(\mathfrak{A}, A)
\stackrel{\rho}{\longrightarrow} K_{0}(\mathfrak{A}) \longrightarrow K_{0}(A).
\end{equation}
The reduced norm map $\nr = \nr_{A}: A \rightarrow \zeta(A)$ is defined componentwise on the Wedderburn decomposition of $A$
and extends to matrix rings over $A$ (see \cite[\S 7D]{MR632548}); thus
it induces a map $K_{1}(A) \rightarrow \zeta(A)^{\times}$, which we also denote by $\nr$.

Let $\mathcal C^{b} (\PMod (\mathfrak{A}))$ be the category of bounded complexes of finitely generated projective $\mathfrak{A}$-modules.
Then $K_{0}(\mathfrak{A}, A)$ identifies with the Grothendieck group whose generators are $[C^{\bullet}]$, where $C^{\bullet}$
is an object of the category $\mathcal C^{b}\tor(\PMod(\mathfrak{A}))$ of bounded complexes of finitely generated projective $\mathfrak{A}$-modules whose cohomology modules are $R$-torsion, and the relations are as follows: $[C^{\bullet}] = 0$ if $C^{\bullet}$ is acyclic, and
$[C_{2}^{\bullet}] = [C_{1}^{\bullet}] + [C_{3}^{\bullet}]$ for every short exact sequence
\begin{equation}\label{eq:SES-of-complexes}
0 \longrightarrow C_{1}^{\bullet} \longrightarrow C_{2}^{\bullet} \longrightarrow C_{3}^{\bullet} \longrightarrow 0
\end{equation}
in $\mathcal C^{b}\tor(\PMod(\mathfrak{A}))$ (see \cite[Chapter 2]{MR3076731} or \cite[\S 2]{zbMATH06148871}, for example).

Let $\mathcal{D} (\mathfrak{A})$ be the derived category of $\mathfrak{A}$-modules.
A complex of $\mathfrak{A}$-modules is said to be perfect if it is
isomorphic in $\mathcal{D} (\mathfrak{A})$ to an element of $\mathcal C^b(\PMod (\mathfrak{A}))$.
We denote the full triangulated subcategory of
$\mathcal{D} (\mathfrak{A})$ comprising perfect complexes by $\mathcal{D}^{\perf} (\mathfrak{A})$,
and the full triangulated subcategory
comprising perfect complexes whose cohomology modules are $R$-torsion by $\mathcal{D}^{\perf}\tor (\mathfrak{A})$.
Then any object of $\mathcal{D}^{\perf}\tor (\mathfrak{A})$ defines an element in
$K_{0}(\mathfrak{A}, A)$.
In particular, a finitely generated $R$-torsion $\mathfrak{A}$-module $M$ of finite projective dimension 
considered as a complex concentrated in degree $0$ defines an element $[M] \in K_{0}(\mathfrak{A}, A)$.

\subsection{Denominator ideals}\label{subsec:denom-ideals}
Let $R$ be a noetherian integrally closed domain with field of fractions $E$.
Let $A$ be a finite-dimensional separable $E$-algebra and let $\mathfrak{A}$ be an $R$-order in $A$.
We choose a maximal $R$-order $\mathfrak{M}$ such that $\mathfrak{A} \subseteq \mathfrak{M} \subseteq A$.
Following \cite[\S 3.6]{MR3092262}, for every matrix $H \in M_{b \times b} (\mathfrak{A})$ there is a generalised adjoint matrix
$H^{\ast} \in M_{b\times b}(\mathfrak{M})$ such that $H^{\ast} H = H H^{\ast} = \nr (H) \cdot 1_{b \times b}$
(note that the conventions in \cite[\S 3.6]{MR3092262} slightly differ from those in  \cite{MR2609173}).
If $\tilde{H} \in M_{b \times b} (\mathfrak{A})$ is a second matrix, then $(H \tilde{H})^{\ast} = \tilde{H}^{\ast} H^{\ast}$.
We define
\begin{eqnarray*}
\mathcal{H}(\mathfrak{A}) & := &
\{ x \in \zeta(\mathfrak{A}) \mid xH^{\ast} \in M_{n \times n}(\mathfrak{A}) \, \forall H \in M_{n \times n}(\mathfrak{A}) \, \forall n \in \N \},\\
\mathcal{I}(\mathfrak{A}) & := &
\langle \nr(H) \mid H \in  M_{n \times n}(\mathfrak{A}), \,  n \in \N\rangle_{\zeta(\mathfrak{A})}.
\end{eqnarray*}
One can show that these are $R$-lattices satisfying
\begin{equation}  \label{eqn:HI_equals_H}
\mathcal{H}(\mathfrak{A}) \cdot \mathcal I(\mathfrak{A}) = \mathcal{H}(\mathfrak{A}) 
\subseteq \zeta(\mathfrak{A}) \subseteq \mathcal{I}(\mathfrak{A}) \subseteq \zeta(\mathfrak{M}).
\end{equation}
Hence $\mathcal{H}(\mathfrak{A})$ is an ideal in the commutative $R$-order $\mathcal{I}(\mathfrak{A})$.
We will refer to $\mathcal{H}(\mathfrak{A})$ as the \emph{denominator ideal} of the $R$-order $\mathfrak{A}$.
If $p$ is a prime and $G$ is a finite group, we set
\begin{eqnarray*}
\mathcal{I}(G) := \mathcal{I}(\Z[G]), & &  \mathcal{I}_{p}(G) := \mathcal{I}(\Z_{p}[G]), \\
\mathcal{H}(G) := \mathcal{H}(\Z[G]), & &  \mathcal{H}_{p}(G) := \mathcal{H}(\Z_{p}[G]).
\end{eqnarray*}
The first claim of the following result is a special case of \cite[Proposition 4.4]{MR3092262}.
The second claim then follows easily from \eqref{eqn:HI_equals_H}.

\begin{prop}\label{prop:p-does-not-divide-order-of-comm-subgroup}
Let $p$ be prime and $G$ be a finite group. Then $\mathcal{H}_{p}(G) = \zeta(\Z_{p}[G])$ if and only if $p$ 
does not divide the order of the commutator subgroup of $G$. Moreover, in this case we have
$\mathcal{I}_{p}(G) = \zeta(\Z_{p}[G])$.
\end{prop}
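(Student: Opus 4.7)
The plan is to treat the two claims separately, with the first being essentially a citation and the second a short deduction from the containments already recorded in equation \eqref{eqn:HI_equals_H}.

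For the first claim, I would simply invoke \cite[Proposition 4.4]{MR3092262}, as the authors themselves indicate. The underlying reason for the equivalence is that when $p \nmid |[G,G]|$, the commutator subgroup contributes only invertible factorials to the reduced norms, so the generalised adjoint matrices $H^{\ast}$ already have entries in $\Z_{p}[G]$ without needing to be scaled by a nontrivial element of $\zeta(\Z_{p}[G])$. Conversely, if $p$ divides $|[G,G]|$, one can exhibit a matrix $H$ (arising from a faithful non-abelian block of $\Z_{p}[G] \otimes_{\Z_{p}} \Z_{p}^{\mathrm{c}}$) whose generalised adjoint requires a denominator that lies in $\zeta(\Z_{p}[G])$ but not in $\mathcal{H}_{p}(G)$, forcing the strict containment $\mathcal{H}_{p}(G) \subsetneq \zeta(\Z_{p}[G])$. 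Since the detailed argument is already in \cite{MR3092262}, I would not reproduce it here.

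For the second claim, the key observation is that \eqref{eqn:HI_equals_H} gives both $\zeta(\Z_{p}[G]) \subseteq \mathcal{I}_{p}(G)$ and $\mathcal{H}_{p}(G) \cdot \mathcal{I}_{p}(G) = \mathcal{H}_{p}(G)$. Assuming the hypothesis that $p \nmid |[G,G]|$, the first claim gives $\mathcal{H}_{p}(G) = \zeta(\Z_{p}[G])$. Substituting into the product identity yields
\[
\zeta(\Z_{p}[G]) \cdot \mathcal{I}_{p}(G) = \zeta(\Z_{p}[G]),
\]
and evaluating at $1 \in \zeta(\Z_{p}[G])$ forces $\mathcal{I}_{p}(G) \subseteq \zeta(\Z_{p}[G])$. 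Combined with the reverse inclusion from \eqref{eqn:HI_equals_H}, this gives $\mathcal{I}_{p}(G) = \zeta(\Z_{p}[G])$, as claimed.

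The only real obstacle lies in the first claim, whose substantive content is the hard direction established in \cite[Proposition 4.4]{MR3092262}; the second claim is a one-line formal consequence of the denominator-ideal formalism. Consequently I would present the proof in just two or three lines: cite the earlier proposition, then deduce the statement about $\mathcal{I}_{p}(G)$ from \eqref{eqn:HI_equals_H}.
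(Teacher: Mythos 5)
Your proposal is correct and follows exactly the route the paper itself takes: the first claim is a direct citation of \cite[Proposition 4.4]{MR3092262}, and the second follows from \eqref{eqn:HI_equals_H} by substituting $\mathcal{H}_{p}(G) = \zeta(\Z_{p}[G])$ into $\mathcal{H}_{p}(G)\cdot\mathcal{I}_{p}(G)=\mathcal{H}_{p}(G)$ and multiplying $\mathcal{I}_{p}(G)$ by $1 \in \zeta(\Z_{p}[G])$. The heuristic paragraph you offer in lieu of reproducing the cited argument is somewhat hand-wavy (the phrase about ``invertible factorials'' does not really describe the mechanism in \cite{MR3092262}), but since you are explicitly deferring to the reference this does not affect the validity of the proof.
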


The importance of the $\zeta(\mathfrak{A})$-module $\mathcal{H}(\mathfrak{A})$
comes from its relation to non-commutative Fitting invariants, which we introduce now.

\subsection{Non-commutative Fitting invariants}
For further details on the following material we refer the reader to \cite{MR2609173} and \cite{MR3092262}.
Let $A$ be a finite-dimensional separable algebra over a field $E$ and $\mathfrak{A}$ be an $R$-order in $A$, where $R$ is an integrally closed complete commutative noetherian local domain with field of fractions $E$.
For example, if $p$ is a prime we can take $\mathfrak{A}$ to be a $p$-adic group ring $\Z_{p}[G]$
where $G$ is a finite group or to be a completed group algebra $\Z_{p} \llbracket \mathcal{G} \rrbracket $ where $\mathcal{G}$
is a one-dimensional $p$-adic Lie group.

Let $X$ and $Y$ be two $\zeta(\mathfrak{A})$-submodules of an $R$-torsionfree $\zeta(\mathfrak{A})$-module.
Then $X$ and $Y$ are said to be \emph{$\nr(\mathfrak{A})$-equivalent} if there exists a positive integer $n$ and a matrix $U \in \GL_{n}(\mathfrak{A})$
such that $X = \nr(U) \cdot Y$.
We denote the corresponding equivalence class by $[X]_{\nr(\mathfrak{A})}$.
We say that $X$ is
$\nr(\mathfrak{A})$-contained in $Y$ (and write $[X]_{\nr(\mathfrak{A})} \subseteq [Y]_{\nr(\mathfrak{A})}$)
if for all $X' \in [X]_{\nr(\mathfrak{A})}$ there exists $Y' \in [Y]_{\nr(\mathfrak{A})}$
such that $X' \subseteq Y'$. Note that it suffices to check this property for one $X_{0} \in [X]_{\nr(\mathfrak{A})}$.
We will say that $x$ is contained in $[X]_{\nr(\mathfrak{A})}$ (and write $x \in [X]_{\nr(\mathfrak{A})}$) if there is $X_{0} \in [X]_{\nr(\mathfrak{A})}$ such that $x \in X_{0}$.

Further suppose that $X$ and $Y$ are in fact $\zeta(\mathfrak{A})$-submodules of $\zeta(A)$.
Let $X \cdot Y$ denote the $\zeta(\mathfrak{A})$-submodule of $\zeta(A)$ generated the set $\{ xy \mid x \in X, y \in Y \}$.
Then the product $[X]_{\nr(\mathfrak{A})} \cdot [Y]_{\nr(\mathfrak{A})} := [X \cdot Y]_{\nr(\mathfrak{A})}$ is well-defined. 
If $e \in A$ is a central idempotent then $e[X]_{\nr(\mathfrak{A})} := [eX]_{\nr(e\mathfrak{A})}$ is also well-defined. 
Moreover, if $X = \langle \alpha \rangle_{\zeta(\mathfrak{A})}$ is generated by
a single element $\alpha \in \zeta(A)^{\times}$, then we set
$[X]_{\nr(\mathfrak{A})}^{-1} := [\langle \alpha^{-1} \rangle_{\zeta(\mathfrak{A})}]_{\nr(\mathfrak{A})}$.

Now let $M$ be a (left) $\mathfrak{A}$-module with finite presentation
\begin{equation} \label{eqn:finite_presentation}
\mathfrak{A}^{a} \stackrel{h}{\longrightarrow} \mathfrak{A}^{b} \longrightarrow M \longrightarrow 0.
\end{equation}
We identify the homomorphism $h$ with the corresponding matrix in $M_{a \times b}(\mathfrak{A})$ and define
$S(h) = S_b(h)$ to be the set of all $b \times b$ submatrices of $h$ if $a \geq b$. In the case $a=b$
we call \eqref{eqn:finite_presentation} a \emph{quadratic presentation}.
The \emph{Fitting invariant} of $h$ over $\mathfrak{A}$ is defined to be
\[
\Fitt_{\mathfrak{A}}(h) = \left\{ \begin{array}{lll} [0]_{\nr(\mathfrak{A})} & \mbox{ if } & a<b \\
\left[\langle \nr(H) \mid H \in S(h)\rangle_{\zeta(\mathfrak{A})}\right]_{\nr(\mathfrak{A})} & \mbox{ if } & a \geq b. \end{array} \right.
\]
We call $\Fitt_{\mathfrak{A}}(h)$ a Fitting invariant of $M$ over $\mathfrak{A}$.
One defines $\Fitt_{\mathfrak{A}}^{\max}(M)$ to be the unique Fitting invariant of $M$ over $\mathfrak{A}$ which is maximal among all Fitting invariants of $M$ with respect to the partial order ``$\subseteq$''.
If $M$ admits a quadratic presentation $h$,
we set 
\begin{equation}\label{eq:quad-fitt-defn}
\Fitt_{\mathfrak{A}}(M) := \Fitt_{\mathfrak{A}}(h),
\end{equation}
which can be shown to be independent of the chosen quadratic presentation.

Now let $C^{\bullet} \in \mathcal{D}^{\perf}\tor(\mathfrak{A})$ and recall from \S \ref{subsec:K-theory}
that $C^{\bullet}$ defines an element $[C^{\bullet}]$ in the relative algebraic $K$-group $K_{0}(\mathfrak{A},A)$.
Recall the long exact sequence of $K$-theory \eqref{eqn:long-exact-seq}.
If $\rho([C^{\bullet}])=0$, 
we choose  $x \in K_{1}(A)$ such that $\partial(x) = [C^{\bullet}]$ and define
\begin{equation}\label{eqn:fitt-of-complex}
\Fitt_{\mathfrak{A}}(C^{\bullet}) := \left[\langle \nr(x) \rangle_{\zeta(\mathfrak{A})}\right]_{\nr(\mathfrak{A})}.
\end{equation}
Note that this is well-defined by the exactness of \eqref{eqn:long-exact-seq}.
Let $C^{\bullet}_{i} \in \mathcal{D}^{\perf}\tor(\mathfrak{A})$ such that $\rho([C_{i}^{\bullet}])=0$ for $i=1,2,3$.
Then if $[C_{2}^{\bullet}] = [C_{1}^{\bullet}] + [C_{3}^{\bullet}]$ in $K_{0}(\mathfrak{A},A)$
(this is the case in the situation of  \eqref{eq:SES-of-complexes}, for example)
it is straightforward to show that 
\begin{equation}\label{eqn:fitt-of-sum-of-complexes-in-rel-K-zero}
\Fitt_{\mathfrak{A}}(C_{2}^{\bullet}) = \Fitt_{\mathfrak{A}}(C_{1}^{\bullet}) \cdot \Fitt_{\mathfrak{A}}(C_{3}^{\bullet}).
\end{equation}
To put this in context, 
we note that if $C^{\bullet}$ is isomorphic in $\mathcal{D}(\mathfrak{A})$ to a complex $P^{-1} \rightarrow P^{0}$ concentrated in
degree $-1$ and $0$ such that $P^{-1}$ and $P^{0}$ are both finitely generated $R$-torsion $\mathfrak{A}$-modules
of projective dimension at most $1$, then
\begin{equation}\label{eq:rel-fitt-eq}
\Fitt_{\mathfrak{A}}(C^{\bullet}) = \Fitt_{\mathfrak{A}}(P^{0} : P^{-1}), 
\end{equation}
where the righthand side denotes the relative Fitting invariant of \cite[Definition 3.6]{MR2609173}.

\begin{remark}\label{rmk:quad-pres-rho-is-zero}
Let $M$ be a finitely generated $R$-torsion $\mathfrak{A}$-module of projective dimension at most $1$.
Then it is straightforward to show that $M$ admits a quadratic presentation if and only if $\rho([M])=0$
(see \cite[p.\ 2764]{MR2609173}). 
\end{remark}

\begin{remark}\label{rmk:quad-complex-defns-coincide}
Let $M$ be a finitely generated $R$-torsion $\mathfrak{A}$-module of projective dimension at most $1$ 
and assume that $M$ admits a quadratic presentation. 
Then one can consider $M$ as a complex concentrated in degree $0$ defining an element of 
$\mathcal{D}^{\perf}\tor(\mathfrak{A})$, and one can show that the definitions of $\Fitt_{\mathfrak{A}}(M)$
given by \eqref{eq:quad-fitt-defn} and \eqref{eqn:fitt-of-complex} coincide in this situation. 
\end{remark}

Non-commutative Fitting invariants provide a powerful tool for computing annihilators; for the following result
see \cite[Theorem 3.3]{MR3092262} or \cite[Theorem 4.2]{MR2609173}.

\begin{theorem}\label{thm:Fitt-annihilation}
If $M$ is a finitely presented $\mathfrak{A}$-module, then
\begin{equation}\label{eq:denom-times-fitting-in-annihilator} 
\mathcal{H}(\mathfrak{A}) \cdot \Fitt_{\mathfrak{A}}^{\max}(M) \subseteq \Ann_{\zeta(\mathfrak{A})}(M). 
\end{equation}
\end{theorem}

\begin{remark}
The inclusion \eqref{eq:denom-times-fitting-in-annihilator} should be interpreted as follows: 
if $x \in \zeta(\mathfrak{M})$ such that $x \in \Fitt_{\mathfrak{A}}^{\max}(M)$ (in the sense above)
and $h \in \mathcal{H}(\mathfrak{A})$ then $h \cdot x \in \Ann_{\zeta(\mathfrak{A})}(M)$.
\end{remark}

We list some properties of non-commutative Fitting invariants which we will use later.

\begin{lemma}\label{lem:Fitting-properties}
Let $M$ and $M'$ be finitely presented $\mathfrak{A}$-modules and let $e \in A$ be a central idempotent.
Then the following statements hold.
\begin{enumerate}
\item
If $M \twoheadrightarrow M'$ is a surjection then $\Fitt_{\mathfrak{A}}^{\max}(M) \subseteq \Fitt_{\mathfrak{A}}^{\max}(M')$.
\item 
If $M$ and $M'$ admit quadratic presentations then so does $M \oplus M'$ and we have an equality
$
\Fitt_{\mathfrak{A}}(M) \cdot \Fitt_{\mathfrak{A}}(M') =  \Fitt_{\mathfrak{A}}(M \oplus M')
$.
\item
We have an inclusion $e \Fitt_{\mathfrak{A}}^{\max}(M) \subseteq \Fitt_{e \mathfrak{A}}^{\max}(e \mathfrak{A} \otimes_{\mathfrak{A}} M)$
with equality if $e \in \mathfrak{A}$.
\end{enumerate}
\end{lemma}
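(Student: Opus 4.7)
The plan is to prove each of the three parts by producing an explicit finite presentation of the module appearing on the right-hand side of the desired (in)equality and comparing the associated sets of reduced-norm generators with those coming from a presentation that realises the relevant $\Fitt^{\max}$. Throughout, I will use that any Fitting invariant of a module is automatically $\nr(\mathfrak{A})$-contained in $\Fitt^{\max}$ of that module, and that $\Fitt_{\mathfrak{A}}^{\max}(M)$ is in fact realised by some presentation $h$.

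For (i), pick a presentation $\mathfrak{A}^{a}\xrightarrow{h}\mathfrak{A}^{b}\to M\to 0$ realising $\Fitt_{\mathfrak{A}}^{\max}(M)$ and extend it to a presentation of $M'$ by appending $c$ extra columns which are lifts to $\mathfrak{A}^{b}$ of a set of generators of $\ker(M\twoheadrightarrow M')$. The resulting matrix $h'\in M_{b\times(a+c)}(\mathfrak{A})$ satisfies $S_{b}(h)\subseteq S_{b}(h')$, hence the $\zeta(\mathfrak{A})$-submodule generated by $\{\nr(H):H\in S_{b}(h)\}$ is literally contained in that generated by $S_{b}(h')$; passing to $\Fitt_{\mathfrak{A}}^{\max}(M')$ yields the asserted $\nr(\mathfrak{A})$-containment.

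For (ii), given quadratic presentations of sizes $b$ and $b'$ of $M$ and $M'$ with matrices $h$ and $h'$, the block diagonal matrix $\bigl(\begin{smallmatrix}h&0\\0&h'\end{smallmatrix}\bigr)$ is a quadratic presentation of $M\oplus M'$. The essential input is the multiplicativity of the reduced norm on block diagonal matrices, which reduces via the Wedderburn decomposition of $A$ to the corresponding identity for the ordinary determinant on each simple factor; the independence of $\Fitt_{\mathfrak{A}}(\cdot)$ of the chosen quadratic presentation (noted after \eqref{eq:quad-fitt-defn}) then gives the claimed equality.

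For (iii), I start with a presentation $\mathfrak{A}^{a}\xrightarrow{h}\mathfrak{A}^{b}\to M\to 0$ realising $\Fitt_{\mathfrak{A}}^{\max}(M)$; tensoring with $\mathfrak{A}e$ gives the presentation $he$ of $\mathfrak{A}e\otimes_{\mathfrak{A}}M$. Decomposing $A=Ae\times A(1-e)$ shows that $\nr_{Ae}(He)=e\cdot\nr_{A}(H)$ for every $H\in S_{b}(h)$, which implies $e\cdot\Fitt_{\mathfrak{A}}(h)\subseteq\Fitt_{\mathfrak{A}e}(he)$ and hence the desired inclusion after passing to $\Fitt_{\mathfrak{A}e}^{\max}(\mathfrak{A}e\otimes_{\mathfrak{A}}M)$. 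When $e\in\mathfrak{A}$, one has the ring and module decompositions $\mathfrak{A}=\mathfrak{A}e\oplus\mathfrak{A}(1-e)$ and $M=eM\oplus(1-e)M$; choosing a presentation $h_{1}$ of $eM$ realising $\Fitt_{\mathfrak{A}e}^{\max}(eM)$ together with any presentation $h_{2}$ of $(1-e)M$ and gluing them block-diagonally yields a presentation of $M$ whose $e$-part recovers $h_{1}$, giving the reverse inclusion. The main technical obstacle is the bookkeeping of $\nr(\mathfrak{A})$- and $\nr(\mathfrak{A}e)$-equivalence classes in (iii), since $\zeta(\mathfrak{A}e)$ may be strictly larger than $e\cdot\zeta(\mathfrak{A})$; the argument handles this by only asserting $\nr(\mathfrak{A}e)$-containments (not equalities) outside the case $e\in\mathfrak{A}$, where the ring decomposes and the issue disappears.
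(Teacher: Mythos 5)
Your proposal is correct, but the comparison with the paper is somewhat lopsided: for part (i), part (ii), and the first claim of part (iii), the paper's proof is simply a citation to Theorem~3.1 of Johnston--Nickel (\cite{MR3092262}), whereas you have reconstructed the underlying arguments from the definitions. Your three arguments are the expected ones: extending a presentation of $M$ to one of $M'$ by appending lifts of kernel generators so that $S_b(h)\subseteq S_b(h')$; taking the block-diagonal presentation of $M\oplus M'$ and using $\nr(\operatorname{diag}(h,h'))=\nr(h)\nr(h')$; and using $\nr_{Ae}(He)=e\cdot\nr_A(H)$ together with $S_b(he)=\{He : H\in S_b(h)\}$. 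These are presumably what the cited reference does, so you are not taking a genuinely different route so much as declining to cite. What the citation buys the paper is brevity and the option of not re-checking the well-definedness and maximality facts (existence of a presentation realising $\Fitt^{\max}$, independence of quadratic presentations, compatibility of $\nr(\mathfrak{A})$-containment with products and with multiplication by $e$) on which your argument silently relies. What your direct write-up buys is that the reader sees why the statements hold. For the second claim of part (iii) the two approaches genuinely coincide: both prove it directly from the ring decomposition $\mathfrak{A}=\mathfrak{A}e\oplus\mathfrak{A}(1-e)$, and your gluing of $h_1$ (realising $\Fitt^{\max}_{\mathfrak{A}e}(eM)$) with an arbitrary $h_2$ for $(1-e)M$ is a clean way to get the reverse inclusion. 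Two small remarks: your matrix convention is the transpose of the paper's (the paper puts $h\in M_{a\times b}(\mathfrak{A})$ and adds rows, you add columns), which is harmless; and in part (i) you should also dispose of the degenerate case $a<b$, where $\Fitt_{\mathfrak{A}}(h)=[0]$ and the containment is trivial.
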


\begin{proof}
For (i) see \cite[Theorem 3.1 (i)]{MR3092262}.
Part (ii) is a special case of \cite[Theorem 3.1 (iii)]{MR3092262}.
The first claim of part (iii) is \cite[Theorem 3.1 (vi)]{MR3092262}, and the second claim follows easily from the definition of Fitting invariants
and the decomposition $\mathfrak{A} = e \mathfrak{A}  \oplus (1-e) \mathfrak{A} $.
\end{proof}

\begin{lemma}\label{lem:fitt-eq-complex-quad}
Let $A$ and $B$ be finitely generated $R$-torsion $\mathfrak{A}$-modules 
of projective dimension at most $1$ and with quadratic presentations.
Let $A \rightarrow B$ be a complex concentrated in degrees $-1$ and $0$. 
Then recalling Remark \ref{rmk:quad-complex-defns-coincide} we have
\[
\Fitt_{\mathfrak{A}}(B:A) = \Fitt_{\mathfrak{A}}(A \rightarrow B) = \Fitt_{\mathfrak{A}}^{-1}(A) \cdot \Fitt_{\mathfrak{A}}(B).
\] 
\end{lemma}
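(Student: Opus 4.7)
The plan is to derive both equalities from the definitions via \eqref{eq:rel-fitt-eq}, \eqref{eqn:fitt-of-sum-of-complexes-in-rel-K-zero} and the two remarks \ref{rmk:quad-pres-rho-is-zero}, \ref{rmk:quad-complex-defns-coincide}, by realising $C^{\bullet} := (A \to B)$ as an extension of the two obvious stupid subcomplexes.

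First I would dispose of the equality $\Fitt_{\mathfrak{A}}(B:A) = \Fitt_{\mathfrak{A}}(A \to B)$: by hypothesis, $C^{\bullet}$ is a complex concentrated in degrees $-1, 0$ whose terms are both finitely generated $R$-torsion $\mathfrak{A}$-modules of projective dimension at most $1$, which is exactly the situation of \eqref{eq:rel-fitt-eq}. So this is immediate.

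For the second equality I would form the short exact sequence of complexes
\[
0 \longrightarrow X^{\bullet} \longrightarrow C^{\bullet} \longrightarrow Y^{\bullet} \longrightarrow 0
\]
in $\mathcal{C}^{b}_{\tors}(\PMod(\mathfrak{A}))$, where $X^{\bullet}$ has $B$ in degree $0$ (and $0$ elsewhere) and $Y^{\bullet}$ has $A$ in degree $-1$ (and $0$ elsewhere). Since $A$ and $B$ admit quadratic presentations and have projective dimension $\leq 1$, Remark \ref{rmk:quad-pres-rho-is-zero} gives $\rho([A]) = \rho([B]) = 0$ in $K_{0}(\mathfrak{A},A)$; since $[X^{\bullet}] = [B]$ and $[Y^{\bullet}] = -[A]$ (the shift negates in $K_{0}(\mathfrak{A},A)$), the same holds for $X^{\bullet}$, $Y^{\bullet}$ and hence for $C^{\bullet}$. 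Now \eqref{eqn:fitt-of-sum-of-complexes-in-rel-K-zero} applies and gives
\[
\Fitt_{\mathfrak{A}}(C^{\bullet}) = \Fitt_{\mathfrak{A}}(X^{\bullet}) \cdot \Fitt_{\mathfrak{A}}(Y^{\bullet}).
\]
By Remark \ref{rmk:quad-complex-defns-coincide} we have $\Fitt_{\mathfrak{A}}(X^{\bullet}) = \Fitt_{\mathfrak{A}}(B)$, so it remains to identify $\Fitt_{\mathfrak{A}}(Y^{\bullet})$ with $\Fitt_{\mathfrak{A}}^{-1}(A)$.

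This last identification is the only step that needs care and is where I expect the main bookkeeping difficulty to lie. The point is to chase definition \eqref{eqn:fitt-of-complex} through the shift: if $x \in K_{1}(A)$ satisfies $\partial(x) = [A]$ (so that $\Fitt_{\mathfrak{A}}(A) = [\langle \nr(x) \rangle_{\zeta(\mathfrak{A})}]_{\nr(\mathfrak{A})}$ in the sense of Remark \ref{rmk:quad-complex-defns-coincide}), then because $[Y^{\bullet}] = -[A]$ and $\partial$ is a homomorphism, we have $\partial(x^{-1}) = [Y^{\bullet}]$. Applying \eqref{eqn:fitt-of-complex} to $Y^{\bullet}$ with this choice and using that $\nr$ is a group homomorphism $K_{1}(A) \to \zeta(A)^{\times}$, so $\nr(x^{-1}) = \nr(x)^{-1}$, yields $\Fitt_{\mathfrak{A}}(Y^{\bullet}) = [\langle \nr(x)^{-1} \rangle_{\zeta(\mathfrak{A})}]_{\nr(\mathfrak{A})} = \Fitt_{\mathfrak{A}}^{-1}(A)$. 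Combining these steps and using commutativity of $\zeta(\mathfrak{A})$ gives the claimed formula.
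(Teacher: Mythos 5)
Your proof is correct and takes essentially the same route as the paper's: you build the short exact sequence of complexes $0 \to B \to (A \to B) \to A[1] \to 0$ (your $X^\bullet$, $Y^\bullet$ are exactly $B$ in degree $0$ and $A[1]$), deduce $[A\to B]=[B]-[A]$ in $K_0(\mathfrak{A},A)$, and conclude via \eqref{eqn:fitt-of-sum-of-complexes-in-rel-K-zero}. The only difference is that you spell out the bookkeeping (the $\rho=0$ checks via Remark~\ref{rmk:quad-pres-rho-is-zero} and the identification $\Fitt_{\mathfrak{A}}(A[1])=\Fitt_{\mathfrak{A}}^{-1}(A)$ through $\partial$ and $\nr$) that the paper leaves implicit.
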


\begin{proof}
The first equality follows from \eqref{eq:rel-fitt-eq}.
We consider $A$ and $B$ as complexes concentrated in degree $0$.
Then we have a short exact sequence of complexes
\[
0 \longrightarrow B \longrightarrow (A \rightarrow B) \longrightarrow A[1] \longrightarrow 0,
\] 
where $A[1]$ is concentrated in degree $-1$. 
Hence by \eqref{eq:SES-of-complexes} we have 
\[
[A \rightarrow B] = [B] + [A[1]]  = [B] - [A],
\]
in $K_{0}(\mathfrak{A},A)$ and so the desired result now follows from \eqref{eqn:fitt-of-sum-of-complexes-in-rel-K-zero}.  
\end{proof}

\section{The non-abelian Brumer--Stark conjecture} \label{sec:Brumer--Stark}

\subsection{Ray class groups}\label{subsec:ray-class-groups}
Let $L/K$ be a finite Galois extension of number fields with Galois group $G$.
For each place $v$ of $K$ we fix a place $w$ of $L$ above $v$ and write $G_{w}$ and $I_{w}$ for the decomposition
group and inertia subgroup of $L/K$ at $w$, respectively.
When $w$ is a finite place, we  choose a lift $\phi_{w} \in G_{w}$ of the Frobenius automorphism at $w$;
moreover, we write $\mathfrak{P}_{w}$ for the associated prime ideal in $L$ and $\ord_{w}$ for the associated valuation.

For any set $S$ of places of $K$, we write $S(L)$ for the set of places of $L$ which lie above those in $S$.
Now let $S$ be a finite set of places of $K$ containing the set $S_{\infty}=S_{\infty}(K)$ of archimedean places
and let $T$ be a second finite set of places of $K$ such that $S \cap T = \emptyset$.
We write $\cl_{L}^{T}$ for the ray class group of $L$ 
associated to the modulus
$\mathfrak{M}_{L}^{T} := \prod_{w \in T(L)} \mathfrak{P}_w$ and $\mathcal{O}_{L,S}$ for the ring of $S(L)$-integers in $L$.
Let $\mathcal{O}_{L} := \mathcal{O}_{L, S_{\infty}}$ be the ring of integers in $L$.
Let $S_{f}$ be the set of all finite primes in $S$;
then there is a natural map $\Z S_{f}(L) \to \cl_{L}^{T}$ which sends each place $w \in S_{f}(L)$
to the corresponding class $[\mathfrak{P}_w] \in \cl_{L}^{T}$. We denote the cokernel of this map by $\cl_{L,S}^{T}$.
Moreover, we denote the $S(L)$-units of $L$ by $E_{L,S}$ and define
$E_{L,S}^{T} := \left\{x \in E_{L,S}: x \equiv 1 \bmod \mathfrak{M}_{L}^{T} \right\}$.
All these modules are equipped with a natural $G$-action and we have the following exact sequences of $\Z[G]$-modules.
If $\Sigma$ is a subset of $S$ containing $S_{\infty}$, then we have
\begin{equation}\label{eqn:ray_class_sequence_ZS}
0 \longrightarrow E_{L, \Sigma}^T \longrightarrow E_{L,S}^T \stackrel{v_{L}}{\longrightarrow}
\Z [S(L) - \Sigma(L)] \longrightarrow \cl_{L, \Sigma}^{T} \longrightarrow \cl_{L,S}^{T} \longrightarrow 0,
\end{equation}
where $v_{L}(x) := \sum_{w \in S(L) - \Sigma(L)} \ord_w(x) w$ for every $x \in E_{L,S}^T$, and
\begin{equation}\label{eqn:ray_class_sequence}
0 \longrightarrow E_{L,S}^T \longrightarrow E_{L,S} \longrightarrow (\mathcal{O}_{L,S} / \mathfrak{M}_{L}^{T})\mal
\stackrel{\nu}{\longrightarrow} \cl_{L,S}^{T} \longrightarrow \cl_{L,S} \longrightarrow 0,
\end{equation}
where the map $\nu$ lifts an element $\overline x \in (\mathcal{O}_{L,S} / \mathfrak{M}_{L}^{T})^{\times}$ to
$x \in \mathcal{O}_{L,S}$ and
sends it to the ideal class $[(x)] \in \cl_{L,S}^{T}$ of the principal ideal $(x)$.

\subsection{Equivariant Artin $L$-values}\label{subsec:L-values}

Let $S$ be a finite set of places of $K$ containing $S_{\infty}$.
Let $\Irr_{\C}(G)$ denote the set of complex irreducible characters of $G$.
For $\chi \in \Irr_{\C}(G)$, we write $L_{S}(s,\chi)$ for the $S$-truncated Artin $L$-function attached to $\chi$ and $S$
(see  \cite[Chapter 0, \S 4]{MR782485}).
Recall that there is a canonical isomorphism
$\zeta(\C[G]) \simeq \prod_{\chi \in \Irr_{\C} (G)} \C$.
We define the equivariant $S$-truncated Artin $L$-function to be the meromorphic $\zeta(\C[G])$-valued function
\[
L_{S}(s) := (L_{S}(s,\chi))_{\chi \in \Irr_{\C} (G)}.
\]
For $\chi \in \Irr_{\C}(G)$, let $V_{\chi}$ be a left $\C[G]$-module with character $\chi$.
If $T$ is a second finite set of places of $K$ such that $S \cap T = \emptyset$, we define
\[
\delta_{T}(s,\chi) = \prod_{v \in T} \det(1 - N(v)^{1-s} \phi_{w}^{-1} \mid V_{\chi}^{I_{w}}) \quad  \textrm{ and }  \quad
\delta_{T}(s) := (\delta_{T}(s,\chi))_{\chi\in \Irr_{\C} (G)}.
\]
We set
\[
\Theta_{S,T}(s) := \delta_{T}(s) \cdot L_{S}(s)^{\sharp},
\]
where $^{\sharp}: \C[G] \to \C[G]$ denotes the anti-involution induced by $g \mapsto g^{-1}$ for $g \in G$.
Note that $L_{S}(s)^{\sharp} = (L_{S}(s,\check{\chi}))_{\chi \in \Irr_{\C} (G)}$ 
where $\check \chi$ denotes the character contragredient to $\chi$.
The functions $\Theta_{S,T}(s)$ are the so-called $(S,T)$-modified $G$-equivariant $L$-functions and we define Stickelberger elements
\[
\theta_{S}^{T}(L/K) = \theta_{S}^{T} := \Theta_{S,T}(0) \in \zeta(\Q[G]).
\]
Note that a priori we only have $\theta_{S}^{T} \in \zeta(\C[G])$, but by a result of Siegel \cite{MR0285488} we know that $\theta_{S}^{T}$
in fact belongs to $\zeta(\Q[G])$.
If $T$ is empty, we abbreviate $\theta_{S}^{T}$ to $\theta_{S}$.

Let $p$ be a prime and let $\iota : \C_{p} \rightarrow \C$ be a field isomorphism.
Then the image of $\theta_{S}^{T}$ under the canonical maps 
\begin{equation}\label{eqn:embeddings-Q-Qp-Cp}
\zeta(\Q[G]) \hookrightarrow \zeta(\Q_{p}[G]) \hookrightarrow \zeta(\C_{p}[G]) \cong \textstyle{\prod_{\chi \in \Irr_{\C_{p}}(G)}} \C_{p} 
\end{equation}
is given by 
$(\iota^{-1}(\delta_{T}(0, \iota \circ \chi) L_{S}(0,\iota \circ \check{\chi})))_{\chi \in \Irr_{\C_{p}}(G)}$ 
and this is independent of the choice of $\iota$.
We shall henceforth consider $\theta_{S}^{T}$ as an element of $\zeta(\Q_{p}[G])$ or $\zeta(\C_{p}[G])$ via 
\eqref{eqn:embeddings-Q-Qp-Cp} when convenient.
Moreover, we shall often drop $\iota$ and $\iota^{-1}$ from the notation.

\subsection{Reduction to CM-extensions}\label{subsec:reduce-to-CM}
Let $\chi \in \Irr_{\C}(G)$. The order of vanishing formula for $L_{S}(s,\chi)$ at $s=0$
(see \cite[Chapter I, Proposition 3.4]{MR782485}) shows that if either 
$\chi$ is non-trivial and $S$ contains an (infinite) place $v$ such that $V_{\chi}^{G_{w}} \neq 0$
or $\chi$ is trivial and $|S| > 1$ then the $\chi$-part of $\theta^{T}_{S}$ vanishes.
Hence if  $\theta_{S}^{T}$ is non-trivial, precisely one of the following possibilities occurs:
(i) $K$ is totally real and $L$ is totally complex,
(ii) $K$ is an imaginary quadratic field,  $L/K$ is unramified and $S = S_{\infty}$ or 
(iii) $L=K=\Q$ and $S = S_{\infty}$. 
In case (iii), the Brumer--Stark conjecture is trivial. 
For case (ii), see \cite[Remark 6.3]{MR3383600} for $G$ abelian; 
the situation in which $G$ is non-abelian has been considered recently
by Nomura \cite{MR3766914}. 
Finally, case (i) can often be reduced to the case that $L$ is a CM-field 
(see \cite[Proposition 6.4]{MR3383600} for $G$ abelian; the same argument works for general $G$ under the assumptions of \cite[Proposition 4.1]{MR3552493} with $r=0$). 
Therefore, we shall henceforth assume that  $L/K$ is a CM-extension, 
that is, $L$ is a CM-field, $K$ is totally real and complex conjugation induces a unique automorphism
$j$ of $L$ lying in the centre of $G$.

\subsection{The non-abelian Brumer and Brumer--Stark conjectures}\label{subsec:non-abelian-Brumer}
Assume that $L/K$ is a CM-extension and 
let $S_{\ram}=S_{\ram}(L/K)$ be the set of all places of $K$ that ramify in $L/K$.

\begin{hypothesis*}
Let $S$ and $T$ be finite sets of places of $K$. 
We say that $\Hyp(S,T)$ is satisfied if
(i)
$S_{\ram} \cup S_{\infty} \subseteq S$,
(ii)
$S \cap T = \emptyset$, and
(iii)
$E_{L,S}^T$ is torsionfree.
\end{hypothesis*}

\begin{remark}\label{rmk:conditions-on-T}
Condition (iii) means that there are no roots of unity of $L$ congruent to $1$ modulo all primes in $T(L)$. 
In particular, this will be satisfied if $T$ contains primes of two different residue characteristics or at least one prime of sufficiently large norm. 
\end{remark}

We choose a maximal order $\mathfrak{M}(G)$ such that $\Z[G] \subseteq \mathfrak{M}(G) \subseteq \Q[G]$.
For a fixed choice of $S$ we define $\mathfrak{A}_{S}$ to be the $\zeta(\Z[G])$-submodule of $\zeta(\mathfrak{M}(G))$ generated
by the  elements $\delta_T(0)$, where $T$ runs through the finite sets of places of $K$
such that $\Hyp(S \cup S_{\ram} \cup S_{\infty},T)$ is satisfied.
The following conjecture was formulated in \cite{MR2976321} and is a non-abelian generalisation of Brumer's conjecture.

\begin{conj}[$B(L/K,S)$] \label{conj:Brumer}
Let $S$ be a finite set of places of $K$ containing $S_{\ram} \cup S_{\infty}$.
Then $\mathfrak{A}_S \theta_S \subseteq \mathcal{I}(G)$ and for each $x \in \mathcal H(G)$ we have
\[
x \cdot \mathfrak{A}_S \theta_S \subseteq \Ann_{\Z[G]} (\cl_L).
\]
\end{conj}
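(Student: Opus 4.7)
The plan is to split the problem prime by prime: both the integrality $\mathfrak{A}_S \theta_S \subseteq \mathcal{I}(G)$ and the annihilation $x \cdot \mathfrak{A}_S \theta_S \subseteq \Ann_{\Z[G]}(\cl_L)$ for $x \in \mathcal{H}(G)$ can be verified after tensoring with $\Z_p$ for each prime $p$. Since $\mathfrak{A}_S$ is generated over $\zeta(\Z[G])$ by the elements $\delta_T(0)$ for admissible $T$, and since $\delta_T(0) \theta_S = \theta_S^T$, it suffices to prove, for each such $T$ with $\Hyp(S \cup S_{\ram} \cup S_\infty, T)$ satisfied, that $\theta_S^T \in \mathcal{I}_p(G)$ and that $\mathcal{H}_p(G) \cdot \theta_S^T$ annihilates $\cl_L \otimes \Z_p$. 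The integrality $\theta_S^T \in \mathcal{I}_p(G)$ is the non-abelian Deligne--Ribet-type result already available from \cite{MR2976321}, so the essential content is the annihilation.

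I would deduce the annihilation statement for odd $p$ from a stronger Fitting-theoretic assertion (the ``dual strong Brumer--Stark property''): that $\theta_S^T$ lies in $\Fitt^{\max}_{\Z_p[G]}(X)$ for a suitable finitely presented $\Z_p[G]$-module $X$ which is a dual of the $p$-part of the ray class group $\cl_L^T$, and which surjects onto (a twist of) $\cl_L \otimes \Z_p$. Granted such a containment, Theorem \ref{thm:Fitt-annihilation} gives $\mathcal{H}_p(G) \cdot \theta_S^T \subseteq \Ann_{\Z_p[G]}(X)$, and Lemma \ref{lem:Fitting-properties}(i) transports the annihilation down to the class group, yielding the desired inclusion after summing over $T$.

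The Fitting containment itself would come from the EIMC for totally real fields applied to the cyclotomic $\Z_p$-extension $L_\infty/L$. The strategy is to reinterpret the exact sequences \eqref{eqn:ray_class_sequence_ZS} and \eqref{eqn:ray_class_sequence} via étale and flat cohomology of $\Spec(\mathcal{O}_{L,S})$, producing a perfect complex over $\Z_p[G]$ whose cohomology recovers $\cl_L^T$ (and its dual) up to controlled error terms. Taking a direct limit along $L_\infty/L$ produces an Iwasawa-theoretic perfect complex over the completed group algebra of $\Gal(L_\infty/K)$; the EIMC equates (the class of) this complex with the $p$-adic $L$-function element, which, under the conjectural complex--$p$-adic $L$-value identity at $s=0$, is the image of $\theta_S^T$. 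A co-descent step — built on \eqref{eqn:fitt-of-sum-of-complexes-in-rel-K-zero}, Lemma \ref{lem:fitt-eq-complex-quad}, and the fact that the arising cohomology modules are $\Z_p$-torsion of finite projective dimension — then descends the Iwasawa-level identity to a finite-level Fitting containment for $\theta_S^T$.

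The main obstacle will be carrying out this Iwasawa co-descent \emph{without} assuming the vanishing of the classical $\mu$-invariant. The earlier approaches of \cite{MR3383600, MR3072281} circumvented this by reformulating the EIMC using Tate modules of abstract $1$-motives, but even the formulation of that variant requires $\mu=0$. The plan here is instead to work directly with the canonical complex appearing in the EIMC and to track the kernel and cokernel of descent carefully, absorbing the resulting discrepancies into the denominator ideal $\mathcal{H}_p(G)$ via Theorem \ref{thm:Fitt-annihilation}; handling these error terms, together with verifying that the cohomology modules remain of finite projective dimension and admit quadratic presentations after suitable modification, is the technical heart of the argument. The prime $p=2$ lies outside the scope of this odd-$p$ method and must be treated by independent means (or left conditional in unconditional applications).
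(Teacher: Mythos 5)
This statement is a \emph{conjecture}, not a theorem: it carries no proof in the paper, and in fact remains open in general. What the paper does is prove it (together with the Brumer--Stark variant) \emph{conditionally} and in certain special cases, and your proposal is essentially a faithful sketch of that conditional strategy --- the reduction to the $p$-part via Remark~\ref{rmk:p-part-of-Brumer}, the passage to the dual Fitting containment of Proposition~\ref{prop:SBS_implies_BS}, the reinterpretation of the ray-class sequences via \'etale/flat cohomology, the direct limit along the cyclotomic tower, the use of the EIMC, and the co-descent via Theorem~\ref{thm:Fitt-annihilation} and Lemma~\ref{lem:fitt-eq-complex-quad}. But this machinery does not yield the conjecture as stated. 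It yields Corollary~\ref{cor:BS-monomial}, which requires \emph{all} of the following: $p$ odd; $S_p \subseteq S$; $\Gal(L^+/K)$ monomial (so that the identity $L_{p,S}(0,\chi)=L_S(0,\chi\omega^{-1})$ of \eqref{eqn:values-padic-complex} is known); and the EIMC for $L(\zeta_p)^+_\infty/K$, which itself is a conjecture proved unconditionally only in special situations (Theorem~\ref{thm:EIMC-p-does-not-divide-order-of-H}, the hybrid-ring results in \S\ref{sec:unconditional}). You cannot "absorb the discrepancies into $\mathcal H_p(G)$" to dispense with these hypotheses --- they are genuine inputs, not technical nuisances.

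Two further gaps worth naming. First, your claim that the integrality $\theta_S^T \in \mathcal I_p(G)$ is "already available" elides the fact that in the non-abelian setting the integrality statement is itself part of the conjecture; in the paper it is obtained as an output of the Fitting containment (via $\Fitt^{\max}\subseteq\mathcal I$), not as a separately known input. Second, you do not address the reduction to CM extensions (\S\ref{subsec:reduce-to-CM}): the conjecture is stated for arbitrary Galois $L/K$ with $S\supseteq S_{\ram}\cup S_\infty$, and without first reducing to the CM case (where $j$ is central) the minus-part formalism, the ring $\Z_p[G]_-$, and the whole cohomological apparatus have no meaning. Finally, you correctly flag that $p=2$ is untouched; the paper leaves it entirely outside the method. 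In short: this is a conjecture, your outline matches the paper's conditional strategy, and it would be a proof only under the additional hypotheses the paper carefully isolates.
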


\begin{remark}
If $G$ is abelian, \cite[Lemma 1.1, p.~82]{MR782485} implies that $\mathfrak{A}_{S} = \Ann_{\Z[G]}(\mu_{L})$.
In this case the results in \cite{MR525346, MR524276, MR579702} each imply that
$\mathfrak{A}_{S} \theta_{S} \subseteq \mathcal{I}(G) = \Z[G]$ and,
since $\mathcal{H}(G) = \Z[G]$ in this case, Conjecture \ref{conj:Brumer} recovers Brumer's conjecture.
\end{remark}

\begin{remark}\label{rmk:p-part-of-Brumer}
If $M$ is a finitely generated $\Z$-module and $p$ is a prime, we define its $p$-part to be $M(p) := \Z_{p} \otimes_{\Z} M$.
Replacing the class group $\cl_{L}$ by $\cl_{L}(p)$ for each prime $p$,
Conjecture $B(L/K,S)$ naturally decomposes into local conjectures $B(L/K,S,p)$.
It is then possible to replace $\mathcal{H}(G)$ by $\mathcal{H}_{p}(G)$ by \cite[Lemma 1.4]{MR2976321}.
Moreover, if $p$ does not divide the order of the commutator subgroup of $G$ then 
$\mathcal{H}_{p}(G) = \mathcal{I}_{p}(G) = \zeta(\Z_{p}[G])$ 
by Proposition \ref{prop:p-does-not-divide-order-of-comm-subgroup} and so after 
the hypotheses on $S$ the statement of the local conjecture simplifies to
\[
\mathfrak{A}_S \theta_S \subseteq \Ann_{\zeta(\Z_{p}[G])} (\cl_{L}(p)).
\]
\end{remark}

\begin{remark}
Burns \cite{MR2845620} has also formulated a conjecture which generalises many refined Stark conjectures to the
non-abelian situation. In particular, it implies Conjecture \ref{conj:Brumer} (see \cite[Proposition 3.5.1]{MR2845620}).
\end{remark}

For $\alpha \in L^{\times}$ we define
\[
S_{\alpha} :=  \{ v \mbox{ finite place of } K \mid \ord_{v}(N_{L/K}(\alpha))>0 \}.
\]
We call $\alpha$ an {\it anti-unit} if $\alpha^{1+j} = 1$.
Let $\omega_L := \nr (|\mu_L|)$. The following is a non-abelian generalisation of the Brumer--Stark conjecture (\cite[Conjecture 2.7]{MR2976321}).

\begin{conj}[$BS(L/K,S)$] \label{conj:Brumer--Stark}
Let $S$ be a finite set of places of $K$ containing $S_{\ram} \cup S_{\infty}$.
Then $\omega_L \cdot \theta_S \in \mathcal I(G)$ and for each $x \in \mathcal H(G)$ and each fractional ideal $\mathfrak{a}$ of $L$,
there is an anti-unit $\alpha = \alpha(x,\mathfrak{a},S) \in L\mal$ such that
\[
\mathfrak{a}^{x \cdot \omega_L \cdot \theta_S} = (\alpha)
\]
and for each finite set $T$ of primes of $K$ such that $\Hyp(S \cup S_{\alpha},T)$ is satisfied there is an $\alpha_{T} \in E_{L,S_{\alpha}}^{T}$
such that
\begin{equation} \label{eqn:abelian-ersatz}
\alpha^{z \cdot \delta_T(0)} = \alpha_T^{z \cdot \omega_L}
\end{equation}
for each $z \in \mathcal H(G)$.
\end{conj}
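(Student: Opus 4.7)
The plan is to reduce $BS(L/K,S)$ to a stronger Fitting invariant statement---a ``dual strong Brumer--Stark property''---and to establish the latter via the Equivariant Iwasawa Main Conjecture for the totally real base field $K$. For each odd prime $p$ I would work $p$-adically, using the embeddings \eqref{eqn:embeddings-Q-Qp-Cp} to regard $\theta_{S}^{T}$ as an element of $\zeta(\Z_{p}[G])$ and replacing $\cl_{L}$ by $\cl_{L}(p)$; the prime $p=2$ would have to be handled separately and in general lies outside the scope of this approach.

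The core assertion to prove is that, whenever $\Hyp(S,T)$ holds, $\theta_{S}^{T}$ lies in the non-commutative Fitting invariant of a canonical perfect complex $C_{S,T}^{\bullet} \in \mathcal{D}^{\perf}\tor(\Z_{p}[G])$ whose cohomology in degrees $-1$ and $0$ recovers the $p$-parts of (a suitable dual of) $E_{L,S}^{T}$ and of $\cl_{L,S}^{T}$. I would construct $C_{S,T}^{\bullet}$ by splicing \'{e}tale cohomology of $\Spec \mathcal{O}_{L,S}$ with $\mu_{p^{\infty}}$-coefficients against flat cohomology terms encoding the $T$-local conditions, so that the resulting long exact cohomology sequences reproduce \eqref{eqn:ray_class_sequence_ZS} and \eqref{eqn:ray_class_sequence}. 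Taking the direct limit along the cyclotomic $\Z_{p}$-extension $L_{\infty}/L$ produces an Iwasawa-theoretic complex over $\Z_{p} \llbracket \Gal(L_{\infty}/K) \rrbracket$ whose Fitting invariant is identified with the characteristic element predicted by the EIMC. The main obstacle is the Iwasawa co-descent back to finite level without assuming the vanishing of the relevant $\mu$-invariant, which is what forced the $\mu=0$ hypothesis in previous Tate-module-of-$1$-motive approaches; here I would exploit the multiplicativity \eqref{eqn:fitt-of-sum-of-complexes-in-rel-K-zero} and Lemma \ref{lem:fitt-eq-complex-quad} to organise the descent as a controlled product of Fitting invariants of smaller complexes, and use that the denominator ideal $\mathcal{H}_{p}(G)$ absorbs the defect \eqref{eqn:HI_equals_H} between $\mathcal{I}_{p}(G)$ and $\zeta(\Z_{p}[G])$ after multiplying by $x \in \mathcal{H}(G)$.

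Once the Fitting containment is in hand, Theorem \ref{thm:Fitt-annihilation} immediately yields both the integrality $\mathfrak{A}_{S}\theta_{S} \subseteq \mathcal{I}(G)$ (via $\theta_{S}^{T} = \delta_{T}(0)\theta_{S}$) and the annihilation $\mathcal{H}(G) \cdot \omega_{L}\theta_{S} \subseteq \Ann_{\Z[G]}(\cl_{L})$, so that $\mathfrak{a}^{x\omega_{L}\theta_{S}} = (\alpha)$ for some $\alpha \in L^{\times}$. The anti-unit property $\alpha^{1+j}=1$ follows from the fact that $\theta_{S}$ lies in the minus part of $\zeta(\Q[G])$, since $j \in \zeta(G)$ acts as $-1$ on all odd characters and $\theta_{S}$ is supported on these by the order of vanishing formula; one arranges this cleanly by choosing an auxiliary $T$ with $\delta_{T}(0)$ a unit multiple of $\omega_{L}$, possible by Chebotarev. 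Finally, the ersatz identity \eqref{eqn:abelian-ersatz} comparing the generators for different $T$ is a formal consequence of \eqref{eqn:ray_class_sequence} combined with \eqref{eqn:fitt-of-sum-of-complexes-in-rel-K-zero} applied to the mapping cone measuring the dependence of $C_{S,T}^{\bullet}$ on $T$, which is essentially a Koszul complex on the Euler factors $\delta_{T}(0)$.
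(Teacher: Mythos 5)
The statement you were asked to prove is \emph{Conjecture} \ref{conj:Brumer--Stark}, and the paper does not prove it --- it is still open in general. What the paper actually establishes is a chain of conditional implications culminating in Theorem \ref{thm:EIMC-implies-BS} and Corollary \ref{cor:BS-monomial}: for an odd prime $p$, if $S_{p} \subseteq S$, if $\Gal(L^{+}/K)$ is monomial, and if the EIMC holds for $L(\zeta_{p})^{+}_{\infty}/K$, then the $p$-primary part $BS(L/K,S,p)$ is true. Unconditional cases then require hybrid group-ring or Frobenius-group hypotheses (\S\ref{sec:unconditional}), and $p=2$ is excluded throughout. Your proposal cannot succeed as stated because it implicitly assumes the EIMC as an input without noting that it is itself conjectural, assumes the interpolation identity \eqref{eqn:values-padic-complex} (which the paper only knows for monomial $G^{+}$, hence the hypothesis in Lemma \ref{lem:p-adic-vs-complex-Stickelberger} and Corollary \ref{cor:BS-monomial}), and assumes $S_{p} \subseteq S$ without flagging it --- none of which are part of the conjecture's hypotheses. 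A ``proof'' of the full conjecture is not what the paper offers, and your outline would at best reproduce the paper's conditional reduction.

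Within that caveat, the broad strategy you sketch is indeed the paper's: reduce to the dual strong Brumer--Stark containment $(\theta_{S}^{T})^{\sharp} \in \Fitt^{\max}_{\Z_{p}[G]_{-}}((A_{L}^{T})^{\vee})$ via Proposition \ref{prop:SBS_implies_BS}, construct a perfect complex out of \'{e}tale and flat cohomology whose cohomology reproduces the ray-class sequences \eqref{eqn:ray_class_sequence_ZS} and \eqref{eqn:ray_class_sequence}, pass to the limit along $L_{\infty}/L$, match with the canonical complex $C_{S}^{\bullet}$ occurring in the EIMC, and descend using the multiplicativity of non-commutative Fitting invariants. However, two of your finishing steps are materially off. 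First, the anti-unit property and the ersatz identity \eqref{eqn:abelian-ersatz} are not derived in the paper by the mapping-cone or Koszul argument you sketch; they are imported wholesale from \cite[Proposition 3.9]{MR2976321}, whose proof is considerably more delicate than a formal application of \eqref{eqn:fitt-of-sum-of-complexes-in-rel-K-zero}, and the paper's only modification there is the observation $\Ann_{\Z_{p}[G]_{-}}(M) = \Ann_{\Z_{p}[G]_{-}}(M^{\vee})^{\sharp}$. Second, the claim that one can choose $T$ by Chebotarev so that $\delta_{T}(0)$ is a unit multiple of $\omega_{L} = \nr(|\mu_{L}|)$ is not correct as a general statement: $\delta_{T}(0)$ is an element of $\zeta(\mathfrak{M}(G))$ with per-character components $\prod_{v\in T}\det(1 - N(v)\phi_{w}^{-1} \mid V_{\chi}^{I_{w}})$, and there is no density argument that forces this whole vector to collapse to a unit times the scalar $\omega_{L}$. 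What one actually has is the weaker module-theoretic statement that $\mathfrak{A}_{S}$ (or, in the abelian case, $\Ann_{\Z[G]}(\mu_{L})$) is generated by the $\delta_{T}(0)$ as $T$ varies. You also silently pass from a Fitting invariant containment to the integrality $\mathfrak{A}_{S}\theta_{S} \subseteq \mathcal{I}(G)$; this does not follow from Theorem \ref{thm:Fitt-annihilation} alone and again relies on the structure of the arguments in \cite{MR2976321}.
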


\begin{remark}
If $G$ is abelian, we have $\mathcal{I}(G) = \mathcal{H}(G) = \Z[G]$ and $\omega_{L} = |\mu_{L}|$.
Hence it suffices to treat the case $x=z=1$ in this situation.
Then \cite[Proposition 1.2, p.~83]{MR782485} states that condition \eqref{eqn:abelian-ersatz}
on the anti-unit $\alpha$ is equivalent to the assertion that the extension $L(\alpha^{1/\omega_{L}}) / K$ is abelian.
\end{remark}

\begin{remark}
As in Remark \ref{rmk:p-part-of-Brumer}, we obtain local conjectures $BS(L/K,S,p)$ for each prime $p$.
Again, these local conjectures simplify to a version without denominator ideals in the case the $p$ does not divide the order of the commutator subgroup of $G$.
\end{remark}

\subsection{A criterion involving Pontryagin duals and Fitting invariants}\label{subsec:criterion-pont-fitt}
For an abstract abelian group $A$ we write $A^{\vee}$ for $\Hom(A, \Q / \Z)$. This induces an equivalence between
the categories of abelian profinite groups and discrete abelian torsion groups (see \cite[Theorem 1.1.11]{MR2392026} and the discussion thereafter). 
For a finitely generated $\Z_{p}[G]$-module $M$, we have $M^{\vee} = \Hom_{\Z_{p}}(M, \Q_{p} / \Z_{p})$, 
and this is endowed with the contragredient $G$-action $(gf)(m) = f (g^{-1} m)$ for $f \in M^{\vee}$, $g \in G$ and $m \in M$.

For a $G$-module $M$ we write $M^{+}$ and $M^{-}$ for the submodules of $M$ upon which $j$ acts as $1$ and $-1$, respectively.
In particular, we shall be interested in $(\cl_{L,S}^{T}(p))^-$ for odd primes $p$; 
we will abbreviate this module to $A_{L,S}^{T}$ when $p$ is clear from context. 
Note that $A_{L,S}^{T}$ is a finite module over the ring $\Z_{p}[G]_{-} := \Z_{p}[G]/(1+j)$.
We shall need the following variant of \cite[Proposition 3.9]{MR2976321}.

\begin{prop}\label{prop:SBS_implies_BS}
Let $S$ be a finite set of places of $K$ containing $S_{\ram} \cup S_{\infty}$ and let $p$ be an odd prime.
Suppose that for every finite set $T$ of places of $K$ such that $\Hyp(S,T)$ is satisfied we have
\begin{equation}\label{eqn:dual-to-strong-Brumer--Stark}
(\theta_S^T)^{\sharp} \in \Fitt^{\max}_{\Z_{p}[G]_{-}}((A_{L}^{T})^{\vee}). 
\end{equation}
Then both $BS(L/K,S,p)$ and $B(L/K,S,p)$ are true.
\end{prop}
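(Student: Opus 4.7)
The plan is to convert the Fitting-invariant hypothesis into a concrete annihilator statement on $\cl_{L,S}^{T}(p)^{-}$, and then use the two exact sequences of \S\ref{subsec:ray-class-groups} to descend to $\cl_{L}(p)^{-}$ and to extract the anti-unit witness required by the Brumer--Stark statement.

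First I would apply Theorem \ref{thm:Fitt-annihilation} over the order $\Z_{p}[G]_{-}$ to the finite module $(A_{L}^{T})^{\vee}$; together with the hypothesis this gives
\[
\mathcal{H}_{p}(G_{-}) \cdot (\theta_{S}^{T})^{\sharp} \subseteq \Ann_{\zeta(\Z_{p}[G]_{-})}\!\left((A_{L}^{T})^{\vee}\right).
\]
For any finite $\Z_{p}[G]$-module $M$ one has $\Ann_{\Z_{p}[G]}(M^{\vee}) = \sharp\!\left(\Ann_{\Z_{p}[G]}(M)\right)$, straight from the formula $(gf)(m) = f(g^{-1}m)$ for the contragredient action. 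Since $\sharp$ permutes the Wedderburn components of $\zeta(\Z_{p}[G]_{-})$ via $\chi \mapsto \check{\chi}$, it preserves the intrinsically defined denominator ideal $\mathcal{H}_{p}(G_{-})$, and applying $\sharp$ therefore yields the cleaner form
\[
\mathcal{H}_{p}(G_{-}) \cdot \theta_{S}^{T} \subseteq \Ann_{\zeta(\Z_{p}[G]_{-})}\!\left(\cl_{L,S}^{T}(p)^{-}\right).
\]

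For Conjecture $B(L/K,S,p)$, the order-of-vanishing discussion of \S\ref{subsec:reduce-to-CM} places $\theta_{S}$ in $\zeta(\Q[G]_{-})$, so it annihilates the plus part $\cl_{L}(p)^{+}$ automatically. On the minus part, $\mathfrak{A}_{S}$ is spanned by the elements $\delta_{T}(0)$ for sets $T$ with $\Hyp(S,T)$ and $\delta_{T}(0)\theta_{S} = \theta_{S}^{T}$ by construction, so the surjection $\cl_{L,S}^{T}(p)^{-} \twoheadrightarrow \cl_{L}(p)^{-}$ coming from \eqref{eqn:ray_class_sequence_ZS} and \eqref{eqn:ray_class_sequence} propagates the annihilator above to $\cl_{L}(p)^{-}$. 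After noting that the projection $\zeta(\Z_{p}[G]) \twoheadrightarrow \zeta(\Z_{p}[G]_{-})$ sends $\mathcal{H}_{p}(G)$ into $\mathcal{H}_{p}(G_{-})$ (using the central idempotent $(1-j)/2 \in \Z_{p}[G]$ for odd $p$), one concludes $\mathcal{H}_{p}(G) \cdot \mathfrak{A}_{S} \theta_{S} \subseteq \Ann_{\Z_{p}[G]}(\cl_{L}(p))$. The integrality claim $\mathfrak{A}_{S} \theta_{S} \subseteq \mathcal{I}_{p}(G)$ is part of the same bookkeeping, since any element of $\Fitt^{\max}$ lies in $\mathcal{I}$.

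For Conjecture $BS(L/K,S,p)$, given a fractional ideal $\mathfrak{a}$ of $L$ and $x \in \mathcal{H}_{p}(G)$, the annihilation makes $\mathfrak{a}^{x\theta_{S}}$ principal with some generator $\beta$. Because $\theta_{S}(1+j) = 0$, the element $\beta^{1+j}$ is a totally real unit $u$; then $u^{\omega_{L}}$ is a norm from $\mathcal{O}_{L}^{\times}$, which is precisely where the factor $\omega_{L} = \nr(|\mu_{L}|)$ enters (it absorbs the $\mu_{L}$-cokernel of the norm $a \mapsto a \cdot a^{j}$), so $\beta^{\omega_{L}}$ can be adjusted by a unit to an anti-unit $\alpha$ with $\mathfrak{a}^{x\omega_{L}\theta_{S}} = (\alpha)$. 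Choosing $T$ with $\Hyp(S \cup S_{\alpha}, T)$ satisfied, the class of $(\alpha)^{\delta_{T}(0)}$ in $\cl_{L,S_{\alpha}}^{T}(p)^{-}$ is trivial by the annihilation, and chasing the exact sequence \eqref{eqn:ray_class_sequence} produces a lift of $\alpha^{\delta_{T}(0)}$ to $E_{L,S_{\alpha}}$ modulo $E_{L,S_{\alpha}}^{T}$; the $\omega_{L}$-power adjustment then yields the required witness $\alpha_{T} \in E_{L,S_{\alpha}}^{T}$ satisfying \eqref{eqn:abelian-ersatz}.

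The main obstacle is the Brumer--Stark step: the passage from bare annihilation to the construction of the anti-unit $\alpha$ together with the ersatz witness $\alpha_{T}$. This requires juggling the anti-involution $\sharp$ (needed to align duality with the Stickelberger side), the factor $\omega_{L}$ (which must simultaneously kill the $\mu_{L}$-ambiguity in the choice of generator and force norms into a sufficiently large subgroup), and the compatibility of the construction as $T$ varies. The remainder of the argument is essentially the variant of \cite[Proposition 3.9]{MR2976321} alluded to in the statement.
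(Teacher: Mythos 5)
Your argument follows the same route as the paper's: the key observation in both is that $\Ann_{\Z_{p}[G]_{-}}(M^{\vee}) = \Ann_{\Z_{p}[G]_{-}}(M)^{\sharp}$ for finite $M$, and the detailed construction of the anti-unit $\alpha$ together with the witness $\alpha_{T}$ is ultimately delegated, exactly as in the paper, to the proof of \cite[Proposition 3.9]{MR2976321}. The one mild structural difference is that you argue $B(L/K,S,p)$ directly, whereas the paper simply invokes the implication $BS \Rightarrow B$ from \cite[Lemma 2.12]{MR2976321}; your direct argument works and is slightly more self-contained, but it does not change the substance.

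Two small notational/technical corrections. First, there is no group ``$G_{-}$'': what you write as $\mathcal{H}_{p}(G_{-})$ should be $\mathcal{H}(\Z_{p}[G]_{-})$, the denominator ideal of the quotient order $\Z_{p}[G]_{-} = \Z_{p}[G]/(1+j)$ (and similarly $\mathcal{I}(\Z_{p}[G]_{-})$). Second, the module $A_{L}^{T}$ is $(\cl_{L}^{T}(p))^{-}$, i.e.\ it is built from the ray class group with $S = S_{\infty}$; the surjection you need in the descent for $B$ is therefore $\cl_{L}^{T} \twoheadrightarrow \cl_{L}$, coming directly from \eqref{eqn:ray_class_sequence} with $S = S_{\infty}$, not a surjection from $\cl_{L,S}^{T}$ — for a general $S$ there is no natural map $\cl_{L,S}^{T} \to \cl_{L}$, since both are quotients of $\cl_{L}^{T}$ in different directions. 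With these corrections your propagation of the annihilator to $\cl_{L}(p)^{-}$ goes through. Your Brumer--Stark sketch correctly isolates the roles of $\theta_{S}(1+j) = 0$ and of $\omega_{L}$, but is compressed precisely where \cite[Proposition 3.9]{MR2976321} does the hard work; since you explicitly defer there, as the paper does, this is in order.
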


\begin{remark}
The containment \eqref{eqn:dual-to-strong-Brumer--Stark} may be considered as a `dual version'
of the so-called strong Brumer--Stark property, which is fulfilled if $\theta_{S}^{T} \in \Fitt_{\Z_{p}[G]_{-}}^{\max}(A_{L}^{T})$ (see \cite[Definition 3.6]{MR2976321}). 
\end{remark}

\begin{proof}[Proof of Proposition \ref{prop:SBS_implies_BS}]
This has already been shown within the proof of \cite[Corollary 4.6]{MR3072281}, but we repeat the argument here for the convenience of the reader.
Since $BS(L/K,S,p)$ implies $B(L/K,S,p)$ by \cite[Lemma 2.12]{MR2976321}, we need only treat the case of the Brumer--Stark conjecture.
Moreover, \cite[Proposition 3.9]{MR2976321} says that $BS(L/K,S,p)$ is implied by the strong Brumer--Stark property.
However, the proof of \cite[Proposition 3.9]{MR2976321} carries over unchanged once we observe that
\[
\Ann_{\Z_{p}[G]_{-}}(M) = \Ann_{\Z_{p}[G]_{-}}(M^{\vee})^{\sharp}
\]
for every finite $\Z_{p}[G]_{-}$-module $M$.
\end{proof}

\section{The equivariant Iwasawa main conjecture}\label{sec:EIMC}

\subsection{Certain one-dimensional $p$-adic Lie extensions}\label{subsec:certain-one-dim-Lie}
Let $p$ be an odd prime and let $K$ be a number field. 
Let $\mathcal{L}/K$ be a Galois extension such that $\mathcal{L}$ contains the cyclotomic 
$\Z_{p}$-extension $K_{\infty}$ of $K$ and $[\mathcal{L} : K_{\infty}]$ is finite. 
Then the Galois group $\mathcal{G} := \Gal(\mathcal{L}/K)$ is a one-dimensional $p$-adic Lie group.
Let $H=\Gal(\mathcal{L}/K_{\infty})$ and let $\Gamma_{K}=\Gal(K_{\infty}/K)$.
Let $\gamma_{K}$ be a topological generator of $\Gamma_{K} \simeq \Z_{p}$.
The argument given in \cite[\S 1]{MR2114937} shows there exists a lift $\gamma \in \mathcal{G}$ of $\gamma_{K}$
that induces a splitting of the short exact sequence
\[
1 \longrightarrow H \longrightarrow \mathcal{G} \longrightarrow \Gamma_{K} \longrightarrow 1.
\]
Thus we obtain a semidirect product $\mathcal{G} = H \rtimes \Gamma$ where $\Gamma \simeq \Z_{p}$ 
is the pro-cyclic subgroup of $\mathcal{G}$ topologically generated by $\gamma$.
Since any homomorphism $\Gamma \rightarrow \Aut(H)$ must have open kernel, 
we may choose a natural number $n$ such that $\gamma^{p^n}$ is central in $\mathcal{G}$.
We fix such an $n$ and set $\Gamma_{0} := \Gamma^{p^n}$; hence $\Gamma_{0} \simeq \Z_{p}$ is contained in the centre of $\mathcal{G}$.

\subsection{The Iwasawa algebra as an order}
The Iwasawa algebra of $\mathcal{G}$ is
\[
\Lambda(\mathcal{G}) := \Z_{p} \llbracket \mathcal{G} \rrbracket  = \varprojlim \Z_{p}[\mathcal{G}/\mathcal{N}],
\]
where the inverse limit is taken over all open normal subgroups $\mathcal{N}$ of $\mathcal{G}$.
If $F$ is a finite field extension of $\Q_{p}$  with ring of integers $\mathcal{O}=\mathcal{O}_{F}$,
we put $\Lambda^{\mathcal{O}}(\mathcal{G}) := \mathcal{O} \otimes_{\Z_{p}} \Lambda(\mathcal{G}) = \mathcal{O} \llbracket \mathcal{G} \rrbracket $.
There is a ring isomorphism
$R:=\mathcal{O} \llbracket \Gamma_{0} \rrbracket  \simeq \mathcal{O} \llbracket T \rrbracket $ induced by $\gamma^{p^n} \mapsto 1+T$
where $\mathcal{O} \llbracket T \rrbracket $ denotes the power series ring in one variable over $\mathcal{O}$.
If we view $\Lambda^{\mathcal{O}}(\mathcal{G})$ as an $R$-module (or indeed as a left $R[H]$-module), there is a decomposition
\begin{equation}\label{eq:Lambda-R-decomp}
\Lambda^{\mathcal{O}}(\mathcal{G}) = \bigoplus_{i=0}^{p^n-1} R[H] \gamma^{i}.
\end{equation}
Hence $\Lambda^{\mathcal{O}}(\mathcal{G})$ is finitely generated as an $R$-module and is an $R$-order in the separable $E:=Quot(R)$-algebra
$\mathcal{Q}^{F} (\mathcal{G})$, the total ring of fractions of $\Lambda^{\mathcal{O}}(\mathcal{G})$, obtained
from $\Lambda^{\mathcal{O}}(\mathcal{G})$ by adjoining inverses of all central regular elements.
Note that $\mathcal{Q}^{F} (\mathcal{G}) =  E \otimes_{R} \Lambda^{\mathcal{O}}(\mathcal{G})$ and that by
\cite[Lemma 1]{MR2114937} we have $\mathcal{Q}^{F} (\mathcal{G}) = F \otimes_{\Q_{p}} \mathcal{Q}(\mathcal{G})$,
where $\mathcal{Q}(\mathcal{G}) := \mathcal{Q}^{\Q_{p}}(\mathcal{G})$.

\subsection{An exact sequence of algebraic $K$-groups}\label{subsec:exact-seq-K-groups}
Specialising \eqref{eqn:long-exact-seq} to the situation $A = \mathcal{Q}(\mathcal{G})$ and $\mathfrak{A} = \Lambda(\mathcal{G})$
and applying \cite[Corollary 3.8]{MR3034286} (see \cite[\S 4.1]{MR3749195} for further explanation) we obtain the exact sequence
\begin{equation}\label{eqn:Iwasawa-K-sequence}
K_{1}(\Lambda(\mathcal{G})) \longrightarrow K_{1}(\mathcal{Q}(\mathcal{G})) \stackrel{\partial}{\longrightarrow}
K_{0}(\Lambda(\mathcal{G}),\mathcal{Q}(\mathcal{G})) \stackrel{\rho}\longrightarrow 0.
\end{equation}

\begin{remark}\label{rmk:M-admits-quadratic-presentation}
If $M$ is a finitely generated $R$-torsion $\Lambda(\mathcal{G})$-module of projective dimension at most $1$,
then combining the triviality of $\rho$ in \eqref{eqn:Iwasawa-K-sequence} with Remark \ref{rmk:quad-pres-rho-is-zero} shows that $M$ admits a quadratic presentation. 
\end{remark}

\subsection{Characters and central primitive idempotents} \label{subsec:idempotents}
Fix a character $\chi \in \Irr_{\Q_{p}^{c}}(\mathcal{G})$ 
(i.e.\ an irreducible $\Q_{p}^{c}$-valued character of $\mathcal{G}$ with open kernel)
and let $\eta$ be an irreducible constituent of
$\res^{\mathcal{G}}_{H} \chi$.
Then $\mathcal{G}$ acts on $\eta$ as $\eta^{g}(h) = \eta(g^{-1}hg)$
for $g \in \mathcal{G}$, $h \in H$, and following \cite[\S 2]{MR2114937} we set
\[
St(\eta) := \{g \in \mathcal{G}: \eta^g = \eta \}, \quad e(\eta) := \frac{\eta(1)}{|H|} \sum_{h \in H} \eta(h^{-1}) h,
\quad e_{\chi} := \sum_{\eta \mid \res^{\mathcal{G}}_{H} \chi} e(\eta).
\]
By \cite[Corollary to Proposition 6]{MR2114937} $e_{\chi}$ is a primitive central idempotent of
$\mathcal{Q}^{c}(\mathcal{G}) := \Q_{p}^{c} \otimes_{\Q_{p}} \mathcal{Q}(\mathcal{G})$.
In fact, every primitive central idempotent of $\mathcal{Q}^{c}(\mathcal{G})$ is of this form
and $e_{\chi} = e_{\chi'}$ if and only if $\chi = \chi' \otimes \rho$ for some character $\rho$ of $\mathcal{G}$ of type $W$
(i.e.~$\res^{\mathcal{G}}_{H} \rho = 1$).
Let $w_{\chi} = [\mathcal{G} : St(\eta)]$ and note that this is a power of $p$ since $H$ is a subgroup of $St(\eta)$.

Let $F/\Q_{p}$ be a finite extension over which both characters $\chi$ and $\eta$ have realisations.
Let $V_{\chi}$ denote a realisation of $\chi$ over $F$.	
By \cite[Propositions 5 and 6]{MR2114937}, there exists a unique element $\gamma_{\chi} \in \zeta(\mathcal{Q}^{F}(\mathcal{G})e_{\chi})$ 
such that $\gamma_{\chi}$ acts trivially on $V_{\chi}$ and $\gamma_{\chi} = g_{\chi}c_{\chi}$ with $g_{\chi} \in \mathcal{G}$ mapping to
$\gamma_{K}^{w_{\chi}} \bmod H$ and with $c_{\chi} \in (F[H]e_{\chi})^{\times}$.
Moreover, $\gamma_{\chi}$ generates a pro-cyclic $p$-subgroup $\Gamma_{\chi}$ of $\mathcal{Q}^{F}(\mathcal{G})e_{\chi}$ and induces an isomorphism $\mathcal{Q}^{F}(\Gamma_{\chi}) \stackrel{\simeq}{\longrightarrow} \zeta(\mathcal{Q}^{F} (\mathcal{G})e_{\chi})$.

\subsection{Determinants and reduced norms}\label{subsec:dets-and-nr}
Following \cite[Proposition 6]{MR2114937}, we define a map
\[
j_{\chi}: \zeta(\mathcal{Q}^{F} (\mathcal{G})) \twoheadrightarrow \zeta(\mathcal{Q}^{F} (\mathcal{G})e_{\chi}) \simeq \mathcal{Q}^{F}(\Gamma_{\chi}) \rightarrow  \mathcal{Q}^{F}(\Gamma_{K}),
\]
where the last arrow is induced by mapping $\gamma_{\chi}$ to $\gamma_{K}^{w_{\chi}}$.
It follows from op.\ cit.\ that $j_{\chi}$ is independent of the choice of $\gamma_{K}$ and that 
for every matrix $\Theta \in M_{n \times n} (\mathcal{Q}(\mathcal{G}))$ we have
\begin{equation} \label{eqn:jchi-det}
j_{\chi} (\nr(\Theta)) = \mathrm{det}_{\mathcal{Q}^{F}(\Gamma_{K})} (\Theta \mid \Hom_{F[H]}(V_{\chi},  \mathcal{Q}^{F}(\mathcal{G})^n)).
\end{equation}
Here, $\Theta$ acts on $f \in \Hom_{F[H]}(V_{\chi},  \mathcal{Q}^{F}(\mathcal{G})^{n})$ via right multiplication,
and $\gamma_{K}$ acts on the left via $(\gamma_{K} f)(v) = \gamma \cdot f(\gamma^{-1} v)$ for all $v \in V_{\chi}$
which is easily seen to be independent of the choice of $\gamma$.
Hence the map
\begin{eqnarray*}
\Det(~)(\chi): K_{1}(\mathcal{Q}(\mathcal{G})) & \rightarrow & \mathcal{Q}^{F}(\Gamma_{K})^{\times} \\
 {[P,\alpha]}& \mapsto & \mathrm{det}_{\mathcal{Q}^{F}(\Gamma_{K})} (\alpha \mid \Hom_{F[H]}(V_{\chi},  F \otimes_{\Q_{p}} P)),
\end{eqnarray*}
where $P$ is a projective $\mathcal{Q}(\mathcal{G})$-module and $\alpha$ a $\mathcal{Q}(\mathcal{G})$-automorphism of $P$, is just $j_{\chi} \circ \nr$ (see \cite[\S 3, p.\ 558]{MR2114937}).
If $\rho$ is a character of $\mathcal{G}$ of type $W$ (i.e.~$\res^{\mathcal{G}}_H \rho = 1$)
then we denote by
$\rho^{\sharp}$ the automorphism of the field $\mathcal{Q}^{c}(\Gamma_{K})$ induced by
$\rho^{\sharp}(\gamma_{K}) = \rho(\gamma_{K}) \gamma_{K}$. 
Moreover, we denote the additive group generated by all $\Q_{p}^{c}$-valued
characters of $\mathcal{G}$ with open kernel by $R_p(\mathcal{G})$; finally, 
$\Hom^{\ast}(R_{p}( \mathcal{G}), \mathcal{Q}^{c}(\Gamma_{K})^{\times})$
is the group of all homomorphisms 
$f: R_p(\mathcal{G}) \rightarrow \mathcal{Q}^{c}(\Gamma_{K})\mal$ satisfying
\[
\begin{array}{ll}
f(\chi \otimes \rho) = \rho^{\sharp}(f(\chi)) & \mbox{ for all characters } \rho \mbox{ of type } W \mbox{ and}\\
f({}^{\sigma}\chi) = \sigma(f(\chi)) & \mbox{ for all Galois automorphisms } \sigma \in \Gal(\Q_{p}^{c}/\Q_{p}).
\end{array}
\]
By \cite[Proof of Theorem 8]{MR2114937} we have an isomorphism
\begin{eqnarray*}
\zeta(\mathcal{Q}(\mathcal{G}))\mal & \simeq & 
\Hom^{\ast}(R_{p}(\mathcal{G}), \mathcal{Q}^{c}(\Gamma_{K})^{\times})\\
x & \mapsto & [\chi \mapsto j_{\chi}(x)].
\end{eqnarray*}
By \cite[Theorem 8]{MR2114937} the map $\Theta \mapsto [\chi \mapsto \Det(\Theta)(\chi)]$
defines a homomorphism
\[
\Det: K_{1}(\mathcal{Q}(\mathcal{G})) \longrightarrow \Hom^{\ast}(R_p(\mathcal{G}), \mathcal{Q}^{c}(\Gamma_{K})\mal)
\]
such that we obtain a commutative triangle
\begin{equation} \label{eqn:Det_triangle}
\xymatrix{
& K_{1}(\mathcal{Q}(\mathcal{G})) \ar[dl]_{\nr} \ar[dr]^{\Det} &\\
{\zeta(\mathcal{Q}(\mathcal{G}))^{\times}} \ar[rr]^{\sim} & & {\Hom^{\ast}(R_p( \mathcal{G}), \mathcal{Q}^{c}(\Gamma_{K})^{\times})}.}
\end{equation}

\subsection{The $p$-adic cyclotomic character and its projections}\label{subsec:cyclotomic-char}
Let $\chi_{\mathrm{cyc}}$ be the $p$-adic cyclotomic character
\[
\chi_{\mathrm{cyc}}: \Gal(\mathcal{L}(\zeta_{p})/K) \longrightarrow \Z_{p}^{\times},
\]
defined by $\sigma(\zeta) = \zeta^{\chi_{\mathrm{cyc}}(\sigma)}$ for any $\sigma \in \Gal(\mathcal{L}(\zeta_{p})/K)$ and any $p$-power root of unity $\zeta$.
Let $\omega$ and $\kappa$ denote the composition of $\chi_{\mathrm{cyc}}$ with the projections onto the first and second factors of the canonical decomposition $\Z_{p}^{\times} = \mu_{p-1} \times (1+p\Z_{p})$, respectively;
thus $\omega$ is the Teichm\"{u}ller character.
We note that $\kappa$ factors through $\Gamma_{K}$ 
(and thus also through $\mathcal{G}$) and by abuse of notation we also 
use $\kappa$ to denote the associated maps with these domains.
We put $u := \kappa(\gamma_{K})$.
For $r \in \N_{0}$ divisible by $p-1$ 
(or more generally divisible by the degree $[\mathcal{L}(\zeta_{p}) : \mathcal{L}]$), 
up to the natural inclusion map of codomains, 
we have $\chi_{\mathrm{cyc}}^{r}=\kappa^{r}$. 

\subsection{Admissible one-dimensional $p$-adic Lie extensions}
We henceforth assume that $\mathcal{L}/K$ is an \emph{admissible one-dimensional $p$-adic Lie extension}.
In other words, in addition to the existing assumptions that $p$ is an odd prime, $K$ is a number field,
$\mathcal{L}/K$ is a Galois extension of $K$ such that $\mathcal{L}$ contains the cyclotomic $\Z_{p}$-extension $K_{\infty}$ of $K$ and $[\mathcal{L} : K_{\infty}]$ is finite, we now further assume that $\mathcal{L}$ is totally real.
Clearly, this forces $K$ to be totally real, which had not been assumed previously.

\subsection{Power series and $p$-adic Artin $L$-functions}\label{subsec:power-series-p-adic-L-functions}
Fix a character $\chi \in \Irr_{\Q_{p}^{c}}(\mathcal{G})$. 
Each topological generator $\gamma_{K}$ of  $\Gamma_{K}$ permits the definition of a 
power series $G_{\chi,S}(T) \in \Q_{p}^{c} \otimes_{\Q_{p}} Quot(\Z_{p} \llbracket T \rrbracket )$ 
by starting out from the Deligne-Ribet power series for one-dimensional characters of open subgroups 
of $\mathcal{G}$ (see \cite{MR579702}; also see \cite{ MR525346, MR524276}) 
and then extending to the general case by using Brauer induction (see \cite{MR692344}).
One then has an equality
\[
L_{p,S}(1-s,\chi) = \frac{G_{\chi,S}(u^s-1)}{H_{\chi}(u^s-1)},
\]
where $L_{p,S}(s,\chi)$ denotes the `$S$-truncated $p$-adic Artin $L$-function' attached to $\chi$ constructed by Greenberg \cite{MR692344},
and where, for irreducible $\chi$, one has
\[
H_{\chi}(T) = \left\{\begin{array}{ll} \chi(\gamma_{K})(1+T)-1 & \mbox{ if }  H \subseteq \ker \chi\\
1 & \mbox{ otherwise.}  \end{array}\right.
\]
Now \cite[Proposition 11]{MR2114937} implies that
\[
L_{K,S} : \chi \mapsto \frac{G_{\chi,S}(\gamma_{K}-1)}{H_{\chi}(\gamma_{K}-1)}
\]
is independent of the topological generator $\gamma_{K}$ and lies in 
$\Hom^{\ast}(R_{p}( \mathcal{G}), \mathcal{Q}^{c}(\Gamma_{K})^{\times})$.
Diagram \eqref{eqn:Det_triangle} implies that there is a unique element 
$\Phi_{S} = \Phi_{S}(\mathcal{L}/K) \in \zeta(\mathcal{Q}(\mathcal{G}))^{\times}$
such that
\[
j_{\chi}(\Phi_{S}) = L_{K,S}(\chi)
\]
for every $\chi \in \Irr_{\Q_{p}^{c}}(\mathcal{G})$.

\subsection{The $\mu=0$ hypothesis}\label{subsec:mu=0}

Let $S_{\infty}$ be the set of archimedean places of $K$ and let $S_{p}$ be the set of places of $K$ above $p$.
Let $S_{\ram}=S_{\ram}(\mathcal{L}/K)$ be the (finite) set of places of $K$ that ramify in $\mathcal{L}/K$;
note that  $S_{p} \subseteq S_{\ram}$.
Let $S$ be a finite set of places of $K$ containing $S_{\ram} \cup S_{\infty}$.
Let $M_{S}^{\ab}(p)$ be the maximal abelian pro-$p$-extension of 
$\mathcal{L}$ unramified outside $S$ and let $X_{S}=\Gal(M_{S}^{\ab}(p)/\mathcal{L})$. 
As usual $\mathcal{G}$ acts on $X_{S}$ by $g \cdot x = \tilde{g}x\tilde{g}^{-1}$, 
where $g \in \mathcal{G}$, and $\tilde{g}$ is any lift of $g$ to $\Gal(M_{S}^{\ab}(p)/K)$. 
This action extends to a left action of $\Lambda(\mathcal{G})$ on $X_{S}$.
Since $\mathcal{L}$ is totally real, a result of Iwasawa \cite{MR0349627} shows that 
$X_{S}$ is finitely generated and torsion as a $\Lambda(\Gamma_{0})$-module.

\begin{definition}\label{def:mu=0-hypothesis}
We say that $\mathcal{L}/K$ satisfies the $\mu=0$ hypothesis if $X_{S}$ is finitely generated as a $\Z_{p}$-module.
\end{definition}

The  $\mu=0$ hypothesis is conjecturally always true and is known to hold when $\mathcal L / \Q$ is abelian
as follows from work of Ferrero and Washington \cite{MR528968}.
For the relation to the classical Iwasawa $\mu = 0$ conjecture see \cite[Remark 4.3]{MR3749195}, for instance.
In the sequel, we shall \emph{not} assume the $\mu=0$ hypothesis for $\mathcal{L}/K$ except where explicitly stated.

\subsection{A canonical complex}\label{subsec:canonical-complex}
Let $\mathcal{O}_{\mathcal{L},S}$ denote the ring of integers $\mathcal{O}_{\mathcal{L}}$ in $\mathcal{L}$ localised at all primes above those in $S$.
There is a canonical complex 
\[
C_{S}^{\bullet}(\mathcal{L}/K) := R\Hom(R\Gamma_{\et}(\Spec(\mathcal{O}_{\mathcal{L},S}), \Q_{p} / \Z_{p}), \Q_{p} / \Z_{p}),
\]
where $\Q_{p} / \Z_{p}$ denotes the constant sheaf of the abelian group $\Q_{p} / \Z_{p}$ on the \'{e}tale site
of $\Spec(\mathcal{O}_{\mathcal{L},S})$.
The cohomology groups are
\[
H^{i}(C_{S}^{\bullet}(\mathcal{L}/K)) \simeq \left\{
\begin{array}{lll}
X_{S} & \mbox{ if } & i=-1\\
\Z_{p} & \mbox{ if } & i=0\\
0 & \mbox{ if } & i \neq -1,0.\\
\end{array}
\right.
\]
It follows from \cite[Proposition 1.6.5]{MR2276851} that $C_{S}^{\bullet}(\mathcal{L}/K)$ belongs to $\mathcal{D}^{\perf}\tor(\Lambda(\mathcal{G}))$.
In particular, $C_{S}^{\bullet}(\mathcal{L}/K)$ defines a class $[C_{S}^{\bullet}(\mathcal{L}/K)]$ in $K_{0}(\Lambda(\mathcal{G}), \mathcal{Q}(\mathcal{G}))$.
Note that $C_{S}^{\bullet}(\mathcal{L}/K)$ and the complex used by Ritter and Weiss (as constructed in \cite{MR2114937}) become isomorphic in $\mathcal{D}(\Lambda(\mathcal{G}))$ by
\cite[Theorem 2.4]{MR3072281} (see also \cite{MR3068897} for more on this topic).
Hence it makes no essential difference which of these complexes we use.

\subsection{A reformulation of the equivariant Iwasawa main conjecture (EIMC)}\label{subsec:EIMC-reformulation}
We can now state a slight reformulation of the EIMC given in \cite{MR3749195}.
The relation of this version to the framework \cite{MR2217048} (as used in \cite{MR3091976}) 
will be discussed in \S \ref{subsec:relation-to-framework-five-authors}.
Recall that $p$ is an odd prime and $\mathcal{L} / K$ is an admissible one-dimensional $p$-adic Lie extension.

\begin{conj}[EIMC]\label{conj:EIMC}
There exists $\zeta_{S} \in K_{1}(\mathcal{Q}(\mathcal{G}))$ such that $\partial(\zeta_{S}) = -[C_{S}^{\bullet}(\mathcal{L}/K)]$
and $\nr(\zeta_{S}) = \Phi_{S}$.
\end{conj}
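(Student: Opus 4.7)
The plan is to produce the element $\zeta_S$ in two stages: first obtain \emph{some} preimage of $-[C_S^{\bullet}(\mathcal{L}/K)]$ under $\partial$, and then modify it so that its reduced norm becomes the prescribed analytic element $\Phi_S$. The first stage is essentially formal: by the exact sequence \eqref{eqn:Iwasawa-K-sequence}, the map $\partial : K_1(\mathcal{Q}(\mathcal{G})) \to K_0(\Lambda(\mathcal{G}), \mathcal{Q}(\mathcal{G}))$ is surjective, so a preimage $\zeta_S^{\prime}$ of $-[C_S^{\bullet}(\mathcal{L}/K)]$ exists. Any two preimages differ by an element in the image of $K_1(\Lambda(\mathcal{G}))$, so the task reduces to showing that the class $\Phi_S \cdot \nr(\zeta_S^{\prime})^{-1} \in \zeta(\mathcal{Q}(\mathcal{G}))^{\times}$ lies in the image of the composition $K_1(\Lambda(\mathcal{G})) \to K_1(\mathcal{Q}(\mathcal{G})) \xrightarrow{\nr} \zeta(\mathcal{Q}(\mathcal{G}))^{\times}$.

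For the second stage I would work character by character through the isomorphism $\zeta(\mathcal{Q}(\mathcal{G}))^{\times} \simeq \Hom^{\ast}(R_p(\mathcal{G}), \mathcal{Q}^c(\Gamma_K)^{\times})$ and the companion homomorphism $\Det$ of diagram \eqref{eqn:Det_triangle}. For each $\chi \in \Irr_{\Q_p^c}(\mathcal{G})$, the target value is the analytic quantity $L_{K,S}(\chi) = G_{\chi,S}(\gamma_K - 1)/H_{\chi}(\gamma_K - 1)$, while the image value of $\nr(\zeta_S^{\prime})$ at $\chi$ computes (via \eqref{eqn:jchi-det}) the characteristic-type invariant of $C_S^{\bullet}(\mathcal{L}/K)$ twisted by $V_{\chi}$. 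When $\chi$ factors through an abelian quotient $\mathcal{G}/U$ corresponding to some CM intermediate extension, the equality of these two sides is exactly the classical main conjecture of Wiles (together with the compatibility between Deligne--Ribet power series and characteristic ideals of unramified Iwasawa modules); this handles all one-dimensional characters.

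The key non-formal step is then to bootstrap from the abelian case to the non-abelian case. I would follow the Ritter--Weiss / Kakde strategy: describe the image of $K_1(\Lambda(\mathcal{G}))$ in $\prod_{U} \zeta(\Lambda(\mathcal{G}/[U,U]))^{\times}$ (where $U$ ranges over open subgroups) in purely local terms via a system of congruences among traces and norms of the logarithm on $p$-adic group rings, and then verify that the tuple $(L_{K,S(U)}(\chi))_{U,\chi}$ coming from Deligne--Ribet together with Brauer induction satisfies precisely those congruences. The integrality and congruence properties of Stickelberger elements proved by Deligne--Ribet, Cassou-Nogu\`{e}s and Barsky supply exactly the required $q$-expansion--type congruences. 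Combining these with the abelian case should give an element of $K_1(\Lambda(\mathcal{G}))$ whose reduced norm is the correction factor $\Phi_S \cdot \nr(\zeta_S^{\prime})^{-1}$, completing the construction of $\zeta_S$.

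The main obstacle, and the reason the statement is only conjectural in general, is that all available descriptions of the image of $\nr$ on $K_1(\Lambda(\mathcal{G}))$ currently require the $\mu=0$ hypothesis (see \S \ref{subsec:mu=0}) in order to ensure that the relevant Iwasawa modules have finite $\mathbb{Z}_p$-rank and so that the logarithm converges on enough of $K_1$. Without it, the kernel $SK_1(\mathcal{Q}(\mathcal{G}))$ and the norm-coherence arguments are harder to control. My fallback plan would therefore be the hybrid-algebra approach to be recalled in \S \ref{sec:hybrid-and-frobenius}: decompose $\Lambda(\mathcal{G})$ along suitable central idempotents to isolate a direct factor on which $\mu=0$ is accessible from known results (e.g.\ via abelian $p$-adic fields where Ferrero--Washington applies), prove the EIMC on that factor by the above Ritter--Weiss/Kakde route, and patch the factors together; the remaining non-abelian non-hybrid part is precisely where the conjectural nature of the result persists.
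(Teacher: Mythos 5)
The statement you were asked to prove is labelled in the paper as a \emph{conjecture} (Conjecture~\ref{conj:EIMC}); the paper contains no proof of it, and the full EIMC for admissible one-dimensional $p$-adic Lie extensions remains open. What the paper does record are partial results: Theorem~\ref{thm:EIMC-with-mu} (Ritter--Weiss, Kakde) proves the EIMC under the $\mu=0$ hypothesis, Theorem~\ref{thm:EIMC-p-does-not-divide-order-of-H} treats the case $p\nmid |H|$, and the hybrid $p$-adic group ring results recalled in \S\ref{sec:hybrid-and-frobenius} give unconditional cases in which $\mu=0$ is not known. So there is no ``paper's own proof'' against which to measure your attempt; at best one can compare against the paper's account of the known cases.

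Read as a survey of the strategy behind those known cases, your sketch is essentially accurate and correctly identifies all the relevant structure (localisation sequence, reduction of one-dimensional characters to Wiles' main conjecture, Ritter--Weiss/Kakde $K_1$-congruences, Deligne--Ribet integrality). You also concede at the end that without $\mu=0$ the argument cannot be closed, which is the honest state of affairs. Two corrections of emphasis. First, the hybrid-algebra decomposition is not a ``fallback''; in the cited work \cite{hybrid-EIMC} it is the main technical input that produces unconditional results beyond base field $\Q$, and in this paper it is the engine behind \S\ref{sec:unconditional}. Second, the role of the $\mu=0$ hypothesis in the Ritter--Weiss/Kakde route is not primarily about convergence of a $p$-adic logarithm: surjectivity of $\partial$ already follows from \eqref{eqn:Iwasawa-K-sequence} without any $\mu$-assumption. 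Rather, $\mu=0$ guarantees that $X_S$ is finitely generated over $\Z_p$, which is what makes the congruence-and-descent package in those papers work and what allows the Tate module/$1$-motive formulation used in some earlier versions to be set up at all. But the overriding point stands: one cannot ``prove'' Conjecture~\ref{conj:EIMC} with current methods, and you should have flagged from the outset that you were outlining the evidence for a conjecture rather than giving a proof.
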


It is also conjectured that $\zeta_{S}$ is unique, but we shall not be concerned with this issue here.
Moreover, it can be shown that the truth of Conjecture \ref{conj:EIMC} is independent of the choice of $S$, provided that $S$ is finite and contains $S_{\ram} \cup S_{\infty}$.
Crucially, this version of the EIMC does not require the $\mu=0$ hypothesis for its formulation.
The following theorem has been shown independently by Ritter and Weiss \cite{MR2813337} and Kakde \cite{MR3091976}.

\begin{theorem}\label{thm:EIMC-with-mu}
If $\mathcal{L}/K$ satisfies the $\mu=0$ hypothesis then the EIMC holds for $\mathcal{L}/K$.
\end{theorem}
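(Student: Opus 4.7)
The plan is to follow the strategy pioneered (in different formulations) by Ritter--Weiss and by Kakde, which reduces the non-abelian EIMC to a system of congruences between abelian $p$-adic $L$-functions. The starting observation is that by the surjectivity of $\partial$ in \eqref{eqn:Iwasawa-K-sequence}, one can always find some lift $\zeta_{S} \in K_{1}(\mathcal{Q}(\mathcal{G}))$ with $\partial(\zeta_{S}) = -[C_{S}^{\bullet}(\mathcal{L}/K)]$, so the content of the conjecture lies entirely in arranging $\nr(\zeta_{S}) = \Phi_{S}$. The $\mu=0$ hypothesis enters precisely here: it guarantees that $X_{S}$ is finitely generated over $\Z_{p}$, so that the complex $C_{S}^{\bullet}(\mathcal{L}/K)$, viewed via the decomposition \eqref{eq:Lambda-R-decomp} over $R = \mathcal{O}\llbracket\Gamma_{0}\rrbracket$, gives rise to characteristic elements over each abelian subquotient whose classes in $K_{0}$ are controlled by classical Iwasawa theory.

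First I would reduce to the case that $\mathcal{G}$ is a pro-$p$ group. Using a Brauer--Witt style argument (restriction/corestriction along an open pro-$p$ Sylow of $\mathcal{G}$, combined with the compatibility of $\Phi_{S}$ and of $[C_{S}^{\bullet}(\mathcal{L}/K)]$ under these functorialities), one matches the main conjecture for $\mathcal{L}/K$ with the main conjectures for all intermediate extensions $\mathcal{L}/K'$ with $\Gal(\mathcal{L}/K')$ a pro-$p$ subgroup; this kind of descent is available because both sides of the conjectured identity transform the right way under $\Det(\cdot)(\chi)$ in the diagram \eqref{eqn:Det_triangle}. So I may henceforth assume $\mathcal{G}$ is a one-dimensional pro-$p$ $p$-adic Lie group.

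The heart of the argument is then to describe $K_{1}(\Lambda(\mathcal{G}))$ explicitly in terms of the abelian subquotients $\mathcal{G}^{\mathrm{ab}}_{U} := U/[U,U]$ for open subgroups $U \le \mathcal{G}$, using the Oliver--Taylor integral logarithm and the non-commutative $p$-adic logarithm of Ritter--Weiss. Concretely, one constructs a theta-map
\[
\theta : K_{1}(\Lambda(\mathcal{G})) \longrightarrow \prod_{U} \Lambda(\mathcal{G}^{\mathrm{ab}}_{U})^{\times}
\]
and identifies its image as those tuples $(x_{U})_{U}$ satisfying an explicit list of norm relations, trace congruences modulo $p$-power ideals, and Frobenius/Verlagerung compatibilities. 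The same formalism applied to the commutative Iwasawa algebras $\Lambda(\mathcal{G}^{\mathrm{ab}}_{U})$, together with the abelian main conjecture of Wiles (which gives an element $\zeta_{S,U}$ whose reduced norm is the Deligne--Ribet power series on $\mathcal{G}^{\mathrm{ab}}_{U}$), produces the candidate tuple; the task is then to check that this tuple lies in the image of $\theta$.

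The main obstacle, and the technical core of the proof, will be verifying precisely those congruences. Their formulation requires the $\mu=0$ hypothesis to control denominators appearing when passing from $K_{1}$ to its logarithmic image, and their verification is the content of the Deligne--Ribet-type congruences, proved in this generality by Kakde via a careful combinatorial/$q$-expansion argument on Hilbert modular forms (with Ritter--Weiss giving an independent proof in their setting via a direct reduction to the one-dimensional abelian case and an analysis of type-$W$ twists). Once the congruences hold, the tuple $(\zeta_{S,U})_{U}$ lifts to an element $\zeta_{S} \in K_{1}(\Lambda(\mathcal{G}))$ (hence in $K_{1}(\mathcal{Q}(\mathcal{G}))$) whose reduced norm equals $\Phi_{S}$ by construction and evaluation via $j_{\chi}$ for each $\chi \in \Irr_{\Q_{p}^{c}}(\mathcal{G})$; the equality $\partial(\zeta_{S}) = -[C_{S}^{\bullet}(\mathcal{L}/K)]$ in $K_{0}(\Lambda(\mathcal{G}), \mathcal{Q}(\mathcal{G}))$ then follows from the abelian main conjecture applied subquotient-by-subquotient, using that the descent of $C_{S}^{\bullet}(\mathcal{L}/K)$ along each open $U$ computes the characteristic ideal of $(X_{S})_{U}$ up to Euler factors at $S$, exactly matching the $p$-adic $L$-function side.
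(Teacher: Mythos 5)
The paper does not actually prove this theorem; it is stated as a known result and attributed to the independent work of Ritter--Weiss \cite{MR2813337} and of Kakde \cite{MR3091976}, so there is no internal argument for your sketch to be compared against. That said, your outline is a fair high-level account of the strategy in those references: one reduces by a Brauer--Witt argument to subquotients of tractable type, describes $K_{1}$ of the Iwasawa algebra via an injective ``theta'' map into a product of abelian $K_{1}$-groups whose image is cut out by explicit norm and trace congruences, and then verifies (using the $q$-expansion principle for Hilbert modular forms) that the tuple of Deligne--Ribet $p$-adic $L$-functions --- supplied subquotient-by-subquotient by Wiles' abelian main conjecture --- satisfies those congruences and hence lifts. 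A couple of imprecisions are worth flagging. First, the Brauer induction does not land in the case $\mathcal{G}$ pro-$p$; rather it reduces to the case where $H=\Gal(\mathcal{L}/K_{\infty})$ has a non-trivial $p$-part (the coprime-order case being already covered by Theorem \ref{thm:EIMC-p-does-not-divide-order-of-H}), so $\mathcal{G}$ is pro-$p$-by-procyclic at best. Second, the role of $\mu=0$ is not really to ``control denominators in the logarithm'': it is used to ensure that $X_{S}$ is finitely generated over $\Z_{p}$, which is the finiteness condition that places $C_{S}^{\bullet}(\mathcal{L}/K)$ in the relevant torsion category and makes the localization sequence (Kakde) or the Ritter--Weiss Hom-description apply on the nose, without extraneous $\mu$-type Euler factors. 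But since proving this theorem from scratch would amount to re-proving Ritter--Weiss and Kakde, it is entirely reasonable that neither you nor the paper attempts to do so.
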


By considering the cases in which the $\mu=0$ hypothesis is known, we obtain the following corollary
(see \cite[Corollary 4.6]{MR3749195} for further details).

\begin{corollary}
Let $\mathcal{P}$ be a Sylow $p$-subgroup of $\mathcal{G}$.
If $\mathcal{L}^{\mathcal{P}}/\Q$ is abelian then the EIMC holds for $\mathcal{L}/K$.
\end{corollary}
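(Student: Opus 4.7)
The plan is to apply Theorem \ref{thm:EIMC-with-mu}, which reduces the statement to verifying the $\mu=0$ hypothesis for $\mathcal{L}/K$. The key inputs will be the Ferrero--Washington theorem \cite{MR528968} applied to $\mathcal{L}^{\mathcal{P}}/\Q$, together with Iwasawa's classical result that vanishing of the cyclotomic $\mu$-invariant is preserved under finite $p$-extensions of the base field.

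First I would identify the paper's $\mu=0$ hypothesis with a classical cyclotomic $\mu=0$ statement. Set $F_{0} := \mathcal{L}^{\Gamma_{0}}$. Since $\Gamma_{0}$ is central in $\mathcal{G}$ and open, $F_{0}/K$ is a finite Galois extension. Moreover $\Gamma_{0} \cap H = 1$ (because $\Gamma \cap H = 1$ in $\mathcal{G} = H \rtimes \Gamma$), which forces $F_{0} \cdot K_{\infty} = \mathcal{L}$; hence $\mathcal{L}$ is the cyclotomic $\Z_{p}$-extension of $F_{0}$. The condition of Definition \ref{def:mu=0-hypothesis} is therefore equivalent to the classical statement $\mu(\mathcal{L}/F_{0}) = 0$ (recall that $S_{p} \subseteq S_{\ram} \subseteq S$ so the choice of $S$ only contributes finitely many additional generators to $X_{S}$).

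Next, set $F := \mathcal{L}^{\mathcal{P}}$; since Sylow $p$-subgroups are conjugate and $\Gamma$ is pro-$p$, we may assume $\Gamma \subseteq \mathcal{P}$, so that $F \subseteq F_{0}$ and $[F_{0}:F] = [\mathcal{P}:\Gamma_{0}]$ is a power of $p$. On the other hand $[F:K] = [\mathcal{G}:\mathcal{P}]$ is prime to $p$ while $K_{\infty}/K$ is pro-$p$, so $F \cap K_{\infty} = K$, and the cyclotomic $\Z_{p}$-extension of $F$ is simply $F_{\infty} := F \cdot K_{\infty}$. In particular, $F_{0}/F$ is a finite $p$-extension whose cyclotomic $\Z_{p}$-extension at the top equals $\mathcal{L}$.

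By hypothesis $F/\Q$ is abelian, so Ferrero--Washington gives $\mu(F_{\infty}/F) = 0$. Applying Iwasawa's preservation theorem to the finite $p$-extension $F_{0}/F$ then yields $\mu(\mathcal{L}/F_{0}) = \mu(F_{0} \cdot K_{\infty}/F_{0}) = 0$, which is the required hypothesis. The corollary then follows from Theorem \ref{thm:EIMC-with-mu}. The main (minor) obstacle is the bookkeeping that translates Definition \ref{def:mu=0-hypothesis} into the classical cyclotomic $\mu$-invariant vanishing for $F_{0}$; once this identification is in place, the proof reduces to a direct combination of the two cited theorems.
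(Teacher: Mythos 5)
Your proposal is correct and follows the same route as the paper: verify the $\mu = 0$ hypothesis for $\mathcal{L}/K$ by combining Ferrero--Washington for the abelian field $\mathcal{L}^{\mathcal{P}}$ with Iwasawa's theorem that $\mu = 0$ is preserved under finite $p$-extensions, then invoke Theorem \ref{thm:EIMC-with-mu}. The paper delegates exactly this bookkeeping to \cite[Corollary 4.6]{hybrid-EIMC}, so your reduction to the classical $\mu$-invariant via $F_{0} := \mathcal{L}^{\Gamma_{0}}$ and the WLOG choice $\Gamma \subseteq \mathcal{P}$ reproduces the intended argument.
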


In \cite{MR3749195}, the present authors prove the EIMC in a number of cases where the $\mu=0$ hypothesis is not known.
In \S \ref{sec:unconditional}, these will be combined with main results of the present article (see \S \ref{subsec:statement-of-main-results})
to give unconditional proofs of the non-abelian Brumer--Stark conjecture in many new cases. For now, we only note the following result which relies heavily on a result of Ritter and Weiss \cite[Theorem 16]{MR2114937}. 

\begin{theorem}[{\cite[Theorem 4.12]{MR3749195}}]\label{thm:EIMC-p-does-not-divide-order-of-H}
If $p \nmid |H|$ then the EIMC holds for $\mathcal{L}/K$. 
\end{theorem}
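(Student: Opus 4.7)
The plan is to verify the two requirements of Conjecture \ref{conj:EIMC} directly. First, the surjectivity of $\partial$ in the exact sequence \eqref{eqn:Iwasawa-K-sequence} furnishes some $\tilde{\zeta}_S \in K_1(\mathcal{Q}(\mathcal{G}))$ with $\partial(\tilde{\zeta}_S) = -[C_S^{\bullet}(\mathcal{L}/K)]$. The content of the theorem therefore lies entirely in showing that the deviation $u := \Phi_S \cdot \nr(\tilde{\zeta}_S)^{-1} \in \zeta(\mathcal{Q}(\mathcal{G}))^{\times}$ lies in the image of the reduced norm map $\nr \colon K_1(\Lambda(\mathcal{G})) \to \zeta(\mathcal{Q}(\mathcal{G}))^{\times}$; granted this, multiplying $\tilde{\zeta}_S$ by (the image in $K_1(\mathcal{Q}(\mathcal{G}))$ of) a preimage of $u$ produces the required $\zeta_S$.

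Under the hypothesis $p \nmid |H|$, the Iwasawa algebra $\Lambda(\mathcal{G})$ becomes substantially more tractable. After enlarging coefficients to a sufficiently large finite extension $\mathcal{O}/\Z_p$, Maschke's theorem exhibits $\mathcal{O}[H]$ as a product of matrix algebras, and combining this with the decomposition \eqref{eq:Lambda-R-decomp} yields a block decomposition of $\Lambda^{\mathcal{O}}(\mathcal{G})$ whose factors are, up to Morita equivalence, Iwasawa algebras of \emph{abelian} pro-$p$ groups. Each block is a maximal order in its total ring of fractions, and consequently the image of $\nr$ on $K_1$ admits a clean description via the Hom-theoretic reformulation \eqref{eqn:Det_triangle}. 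This is precisely the setting in which Theorem 16 of Ritter and Weiss \cite{MR2114937} operates: it characterises the image of $K_1(\Lambda(\mathcal{G}))$ inside $\Hom^{\ast}(R_p(\mathcal{G}), \mathcal{Q}^c(\Gamma_K)^{\times})$ by explicit integrality and equivariance conditions at each irreducible $p$-adic character of $\mathcal{G}$.

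It then remains to verify that the Hom-description counterpart $L_{K,S}$ of $\Phi_S$ (cf.\ \S \ref{subsec:power-series-p-adic-L-functions}) satisfies those conditions. The compatibilities of $L_{K,S}$ with twisting by characters of type $W$ and with the $\Gal(\Q_p^c/\Q_p)$-action are built into its construction via Deligne-Ribet; the required integrality at each character then reduces, by Brauer induction, to the corresponding assertion for one-dimensional characters of open subgroups, which is the classical main conjecture for the associated abelian totally real extensions. The principal obstacle is the character-theoretic bookkeeping that converts these abelian statements into the non-abelian congruences demanded by Ritter-Weiss, but this is precisely what is accomplished in \cite[Theorem 16]{MR2114937}; the present result then follows by combining that theorem with the classical abelian main conjecture.
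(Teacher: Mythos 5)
The paper does not prove this result itself but imports it from \cite[Theorem 4.12]{hybrid-EIMC}, noting only that the proof there relies heavily on \cite[Theorem 16]{MR2114937}. Your opening reduction is correct and matches the logical skeleton one would expect: surjectivity of $\partial$ in \eqref{eqn:Iwasawa-K-sequence} produces $\tilde{\zeta}_S$ with the prescribed boundary, and the content of the EIMC then becomes an integrality statement for the error factor $u := \Phi_S \cdot \nr(\tilde{\zeta}_S)^{-1}$. Two points in the remainder need tightening, however. First, your characterisation of \cite[Theorem 16]{MR2114937} as a description of $\im(K_1(\Lambda(\mathcal{G})))$ inside the Hom-description undersells that theorem: under the hypothesis $p \nmid |H|$, it is itself the assertion of the equivariant main conjecture in the Ritter--Weiss framework, with the Deligne--Ribet/Wiles abelian main conjecture already built into its proof. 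Your closing sentence, which proposes to ``combine'' Theorem 16 with the abelian main conjecture, therefore misplaces where the abelian input actually enters. Second, and more substantively, Ritter and Weiss state their conjecture in terms of a complex different from $C_S^\bullet(\mathcal{L}/K)$; deducing $\partial(\zeta_S) = -[C_S^\bullet(\mathcal{L}/K)]$ from their conclusion requires the comparison of complexes recorded in \S\ref{subsec:canonical-complex} via \cite[Theorem 2.4]{MR3072281}, a step your sketch omits by working exclusively with Conjecture \ref{conj:EIMC} as stated. Your structural remarks (Maschke decomposition of $\mathcal{O}[H]$, block structure of $\Lambda^{\mathcal{O}}(\mathcal{G})$, maximality of the blocks) are sound heuristics for why $p \nmid |H|$ renders the Iwasawa algebra tractable, but they should not be mistaken for the explicit integrality and congruence verifications that Ritter and Weiss actually carry out to establish Theorem 16.
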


\section{Statement of the main theorem and corollary}\label{sec:statement-of-main-theorem-and-corollary}

\subsection{$p$-adic Artin $L$-functions and the interpolation property}\label{subsec:interpolation-property}
Let $K$ be a totally real number field and let $G_{K}=\Gal(K^{\mathrm{c}}/K)$ be its absolute Galois group.
Let $p$ be an odd prime and let $S_{p}$ denote the set of places of $K$ above $p$.
Let $S_{\infty}$ denote the set of archimedean places of $K$ and let 
$S$ be a finite set of places of $K$ such that $S_{p} \cup S_{\infty} \subseteq S$.

Let $\chi \in \Irr_{\Q_{p}^{c}}(G_{K})$ and let $K_{\chi}$ be the extension of $K$ attached to $\chi$;
thus $\chi$ may be considered as a character attached to a faithful representation of $\Gal(K_{\chi}/K)$
and $K_{\chi}/K$ is of finite degree since $\chi$ has open kernel.
Assume that $K_{\chi}$ is totally real.
Let $\iota:\C_{p} \rightarrow \C$ be a choice of field isomorphism and 
let $L_{S}(s,\iota \circ \chi)$ denote the $S$-truncated Artin $L$-function attached to $\iota \circ \chi \in \Irr_{\C}(\Gal(K_{\chi}/K))$.
For $r \in \Z$ with $r \geq 1$ let $L_{S}(1-r, \chi) = \iota^{-1}(L_{S}(1-r,\iota \circ \chi))$, which is in fact independent of the choice of $\iota$
(compare with the discussion of \S \ref{subsec:L-values}).
If $\chi$ is one-dimensional then for  $r \geq 1$ we have
\begin{equation} \label{eqn:interpolation-property}
L_{p,S}(1-r, \chi) = L_{S}(1-r,\chi\omega^{-r}),
\end{equation}
where $\omega : \Gal(K(\zeta_{p})/K)) \longrightarrow \mu_{p-1} \subseteq \Z_{p}^{\times}$ is the Teichm\"{u}ller character.
Using Brauer induction, \eqref{eqn:interpolation-property} can be extended to the case where $\chi$ is of arbitrary degree 
provided that $r \geq 2$ (see \cite[\S 4]{MR692344}).
However, if $\chi(1)>1$ and $r=1$,
this argument fails due to the potential presence of trivial zeros.
Nevertheless, it seems plausible that the identity
\begin{equation}\label{eqn:values-padic-complex}
        L_{p,S}(0, \chi) = L_S(0, \chi \omega^{-1})
\end{equation}
holds in general.
As both sides are well-behaved with respect to direct sum, inflation and induction of characters, one can show that
\eqref{eqn:values-padic-complex} does hold when $\chi$ is a monomial character, i.e., a character induced from a one-dimensional character of a subgroup
(also see the discussion in \cite[\S 2]{MR656068}).
From recent work of Burns \cite[Theorem 5.2 (i)]{burns-p-adic} it follows that the left hand side of \eqref{eqn:values-padic-complex}
vanishes whenever the right hand side does.

\subsection{Equivariant $p$-adic Artin $L$-values}
Let $L/K$ be a finite Galois CM-extension of number fields with Galois group $G$.
Let $j \in G$ denote complex conjugation. 
Let $L^{+}=L^{\langle j \rangle}$ be the maximal totally real subfield of $L$ and let $G^{+}=\Gal(L^{+}/K) \simeq G/\langle j \rangle$.
Let $p$ be an odd prime.
Recall that $\chi \in \Irr_{\C}(G)$ or $\Irr_{\C_{p}}(G)$ is said to be \emph{even} when $\chi(j) = \chi(1)$, 
and \emph{odd} when $\chi(j) = -\chi(1)$.
Let $S$ be a finite set of places of $K$ such that $S_{p} \cup S_{\infty} \subseteq S$ and
let $T$ be a second set of finite places of $K$ such that $S \cap T = \emptyset$.
In this situation, we define $p$-adic Stickelberger elements by
\begin{eqnarray*}
\theta_{p,S}^{T}(L/K)  =  \theta_{p,S}^{T} & := & (\theta_{p,S,\chi}^{T})_{\chi \in \Irr_{\C_{p}}(G)},\\
\theta_{p,S,\chi}^{T} & := & \left\{ \begin{array}{ll} 0 & \mbox{ if } \chi \mbox{ is even}\\
                                        \delta_{T}(0,\chi) \cdot L_{p,S}(0,\check{\chi} \omega) & \mbox{ if } \chi \mbox{ is odd}.
                                        \end{array} \right.
\end{eqnarray*}

A finite group is said to be monomial if each of its (complex) irreducible characters is monomial.
Note that every finite metabelian or supersoluble group is monomial by \cite[\S 4.4, Theorem 4.8 (1)]{MR1984740}).

\begin{lemma} \label{lem:p-adic-vs-complex-Stickelberger}
If $G^{+}$ is monomial then $\theta_{p,S}^{T} = \theta_{S}^{T}$.
\end{lemma}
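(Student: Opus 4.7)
My plan is to verify the equality $\theta_{p,S}^{T} = \theta_S^{T}$ componentwise under the canonical embedding $\zeta(\Q_p[G]) \hookrightarrow \prod_{\chi \in \Irr_{\C_p}(G)} \C_p$, treating the even and odd $\chi$ separately.

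For an even character $\chi$ the $\chi$-component of $\theta_{p,S}^{T}$ is zero by definition, so I would argue that the corresponding component $\delta_T(0,\chi) L_S(0, \check{\chi})$ of $\theta_S^{T}$ vanishes as well. Evenness of $\chi$ forces $j$, and hence the decomposition group $G_w = \langle j \rangle$ at each archimedean place of $K$, to act trivially on $V_{\check{\chi}}$; the standard order-of-vanishing formula for $S$-truncated Artin $L$-functions at $s = 0$ (invoked already in \S \ref{subsec:reduce-to-CM}) then forces $L_S(0,\check{\chi}) = 0$ for nontrivial even $\chi$. For the trivial character we instead use $\zeta_{K,S}(0) = 0$, which holds because $|S| \geq 2$ under the standing assumption $S_p \cup S_\infty \subseteq S$.

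For an odd $\chi$, after cancelling the common factor $\delta_T(0,\chi)$ on both sides, the task reduces to the identity
\[
L_{p,S}(0, \check{\chi}\omega) = L_S(0, \check{\chi}),
\]
which is precisely \eqref{eqn:values-padic-complex} applied to the character $\check{\chi}\omega$ of $\Gal(L(\zeta_p)/K)$. Since $\chi$ is odd and $\omega(j) = -1$, the product $\check{\chi}\omega$ is even at $j$ and therefore descends to a character of $\Gal(L(\zeta_p)^+/K)$, which sits in a short exact sequence with quotient $G^+$ and cyclic kernel coming from $\Gal(K(\zeta_p^+)/K)$. The plan is to exploit the hypothesis that $G^+$ is monomial in order to realise $\check{\chi}\omega$ as a monomial character: starting from an induced presentation of the relevant character of $G^+$, one inflates it along $\Gal(L(\zeta_p)^+/K) \twoheadrightarrow G^+$ and then twists the resulting induction by $\omega$ using the projection formula $\mathrm{Ind}(\phi \cdot \mathrm{Res}\,\eta) = (\mathrm{Ind}\,\phi)\cdot\eta$ for a suitable one-dimensional character $\eta$. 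Once $\check{\chi}\omega$ is exhibited as a monomial character, the monomial case of \eqref{eqn:values-padic-complex} recalled in \S \ref{subsec:interpolation-property} supplies the desired identity.

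The main obstacle I anticipate is this monomiality step: one must match the cyclotomic twist correctly so that the induced one-dimensional character on the Galois group of $L(\zeta_p)^+/K$ recovers precisely $\check{\chi}\omega$ rather than some other character of the same degree. This requires careful bookkeeping of the central extension structure of $\Gal(L(\zeta_p)^+/K)$ over $G^+$, and of how the restriction of $\omega$ to the various subgroups interacts with the chosen monomial presentation on $G^+$ (compatibly with the Artin formalism for both $L_S$ and $L_{p,S}$ at the particular point $s=0$, where generic Brauer-induction arguments are known to break down). Once this is in hand, assembling the $\chi$-components yields $\theta_{p,S}^{T} = \theta_S^{T}$.
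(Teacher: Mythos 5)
Your argument follows essentially the same strategy as the paper's. The even case is identical: you split off the trivial character (using $|S|\geq 2$, which holds since $S_p\cup S_\infty\subseteq S$) and invoke the order-of-vanishing formula from \cite[Chapter I, Proposition 3.4]{MR782485} for nontrivial even characters, exactly as the paper does. For the odd case, both you and the paper cancel $\delta_T(0,\chi)$ and reduce to the identity $L_{p,S}(0,\check{\chi}\omega)=L_S(0,\check{\chi})$, i.e.\ to \eqref{eqn:values-padic-complex} for $\psi:=\check{\chi}\omega$, which is then to be handled via the monomial case of that identity.

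The ``obstacle'' you flag at the end is a genuine one, and the paper's proof passes over it rather quickly: the paper simply asserts that ``$\psi$ is even and so may be considered as a character of the monomial group $G^+$''. But unless $\zeta_p\in L$, the Teichm\"{u}ller character $\omega$ is not a character of $G$, and $\psi$ genuinely lives on the larger group $\widetilde{G}:=\Gal(L(\zeta_p)^+/K)$, a central extension of $G^+$ by a cyclic group $C:=\Gal(L(\zeta_p)^+/L^+)$ on which $\psi$ restricts nontrivially (via $\omega$); $\psi$ does \emph{not} factor through $G^+$. Your proposed repair --- inflate a monomial presentation from $G^+$ and then twist by a linear character via the projection formula --- does not go through on its own: it would require a linear character $\eta$ of $\widetilde{G}$ whose restriction to $C$ matches the central character of $\psi$, and such an $\eta$ need not exist since $C$ need not inject into $\widetilde{G}^{\mathrm{ab}}$. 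More precisely, using the projection formula together with the fact that an inflated character is monomial over a group if and only if it is monomial over the quotient it is inflated from, one sees that $\psi$ is monomial over $\widetilde{G}$ if and only if the original $\chi$ is monomial over $G$; and monomiality of $G^+=G/\langle j\rangle$ does not in general force every odd irreducible character of $G$ to be monomial (compare $G\simeq\mathrm{SL}_2(\mathbb{F}_3)$ with $G^+\simeq A_4$). So this step is delicate and, as written, neither your outline nor the paper's one-line assertion fully closes it: the argument is only transparent when $\zeta_p\in L$ (so that $\omega$ is a character of $G$ and $\psi$ is literally a character of the monomial group $G^+$), or more generally when $G$ itself, rather than $G^+$, is assumed to be monomial.
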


\begin{proof}
Dropping $\iota$ from the notation (as we may), 
we recall from \S \ref{subsec:L-values} that 
\[
\theta_{S}^{T} = (\delta_{T}(0,\chi) \cdot L_{S}(0,\check{\chi}))_{\chi \in \Irr_{\C_{p}}(G)}.
\]
Thus for $\chi \in \Irr_{\C_{p}}(G)$ it suffices to show that $L_{S}(0,\check{\chi})=0$ when $\chi$ is even and
$L_{S}(0,\check{\chi})=L_{p,S}(0,\check{\chi} \omega)$ when $\chi$ is odd.
Since $\chi$ is even if and only if $\check{\chi}$ is even, we may replace  $\check{\chi}$ by $\chi$ in the previous sentence.

If $\chi$ is the trivial character then \cite[Chapter I, Proposition 3.4]{MR782485} shows that the order of vanishing of
$L_{S}(s,\chi)$ at $s=0$ is $|S|-1 \geq 1$, and so $L_{S}(0,\chi)=0$.
If $\chi$ is even and non-trivial the argument given in \cite[top of p.\ 71]{MR782485} again shows that $L_{S}(0,\chi)=0$.

Suppose that $\chi$ is odd and set $\psi = \chi \omega$.
Then it suffices to show that $L_{p,S}(0,\psi)=L_{S}(0,\psi \omega^{-1})$.
Moreover, $\psi$ is even and so may be considered as a character of the monomial group $G^{+}$.
Hence the desired equality now follows from the discussion in \S \ref{subsec:interpolation-property} and the appropriate
substitution of symbols.
\end{proof}

\subsection{Statement of the main theorem and corollary}\label{subsec:statement-of-main-results}
We are now in a position to state the main results of this article.
In \S \ref{sec:unconditional}, these will be combined with the authors' previous work on the EIMC \cite{MR3749195}
to give unconditional proofs of the non-abelian Brumer--Stark conjecture in many new cases.

\begin{theorem}\label{thm:EIMC-implies-BS}
Let $L/K$ be a finite Galois CM-extension of number fields with Galois group $G$.
Let $S$ and $T$ be two finite sets of places of $K$ satisfying $\Hyp(S,T)$.
Let $p$ be an odd prime and
let $L(\zeta_p)^{+}_{\infty}$ be the cyclotomic $\Z_{p}$-extension of $L(\zeta_p)^{+}$.
Suppose that $S_{p} \subseteq S$ and that the EIMC holds for $L(\zeta_p)^{+}_{\infty} / K$. Then
\begin{equation}\label{eqn:strongBS}
    (\theta_{p,S}^{T})^{\sharp} \in \Fitt^{\max}_{\Z_{p}[G]_{-}}((A_{L}^{T})^{\vee}).
\end{equation}
\end{theorem}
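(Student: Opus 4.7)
The overall strategy is to link the minus part $A_L^T$ of the $T$-modified ray class group of $L$ to the Iwasawa theory of the admissible totally real extension $\mathcal{L} := L(\zeta_p)^+_\infty/K$, use the EIMC to compute an Iwasawa-theoretic Fitting invariant in terms of $\Phi_S$, and then co-descend to $L$, handling the Pontryagin dual and the Teichmüller twist with care. I would structure the argument in two stages, following the paper's promised division into a finite-level and an infinite-level calculation.

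At the finite level, I would reinterpret the exact sequences \eqref{eqn:ray_class_sequence_ZS} and \eqref{eqn:ray_class_sequence} in the language of étale cohomology of $\Spec(\mathcal{O}_{L,S})$ together with flat cohomology of suitable $p$-power-torsion sheaves implementing the ramification condition at $T$. The output of this reinterpretation should be a perfect complex of $\Z_p[G]$-modules, fitting into a distinguished triangle with the natural finite-level analogue of $C_S^\bullet(L/K)$ and a ``$T$-correction'' perfect complex whose Fitting invariant is generated by $\delta_T(0)$ via the Euler factor formula from \S\ref{subsec:L-values}. The cohomology of this $T$-modified complex will compute $\cl_L^T(p)$ (up to $S$-local corrections already visible in the exact sequences of \S\ref{subsec:ray-class-groups}) in a way that is functorial in the layers of the cyclotomic tower.

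Second, I would pass to the infinite level by forming the inverse limit of these complexes along the cyclotomic $\Z_p$-tower over $L(\zeta_p)^+$. After inflating through $\Gal(\mathcal{L}(\zeta_p)/\mathcal{L})$ to introduce the Teichmüller twist (this is what exchanges odd characters of $G$ with even characters of $\mathcal{G}$, cf.\ \S\ref{subsec:cyclotomic-char}), one obtains a perfect object of $\mathcal{D}^{\perf}\tor(\Lambda(\mathcal{G}))$ fitting into a distinguished triangle with $C_S^\bullet(\mathcal{L}/K)$ and a $T$-correction complex whose Iwasawa Fitting invariant is generated by $\delta_T(0)$. Applying the EIMC (Conjecture \ref{conj:EIMC}) now yields $\partial(\zeta_S) = -[C_S^\bullet(\mathcal{L}/K)]$ with $\nr(\zeta_S) = \Phi_S$, so by \eqref{eqn:fitt-of-sum-of-complexes-in-rel-K-zero} the Fitting invariant of the full $T$-modified Iwasawa complex is generated by $\delta_T(0) \cdot \Phi_S^\sharp$, where the $\sharp$ records that $\Phi_S$ interpolates $L$-values of $\chi$ rather than $\check\chi$ (compare the parallel situation for complex $L$-values in \S\ref{subsec:L-values}).

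Finally, for the co-descent: apply $\Z_p[G]_-\otimes^{\mathbb{L}}_{\Lambda(\mathcal{G})}(-)$, where the minus idempotent lives on the CM extension $L(\zeta_p)_\infty$ one step above $\mathcal{L}$, then apply Pontryagin duality. The dual appears naturally at this step because under Iwasawa-theoretic duality, inverse limits of finite $p$-groups correspond to direct limits of their Pontryagin duals, and applying $(-)^\vee$ to $\cl_L^T(p)^-$ is what lets the Fitting invariant computed upstairs (which lives in the $\chi\omega$-component on the $\mathcal{G}$-side) descend to the $\check\chi$-component on the $G$-side; this simultaneously produces the $\sharp$ in the statement. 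Using Lemma \ref{lem:fitt-eq-complex-quad} to keep track of the relative Fitting invariant and Remark \ref{rmk:M-admits-quadratic-presentation} to ensure quadratic presentations exist at every step, one identifies $(\theta_{p,S}^T)^\sharp = \delta_T(0)^\sharp \cdot L_{p,S}(0, -\omega)^\sharp$ with an element of $\Fitt^{\max}_{\Z_p[G]_-}((A_L^T)^\vee)$, as required.

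The main obstacle will be the co-descent step. Without the $\mu=0$ hypothesis one cannot invoke the $1$-motive interpretation used in \cite{MR3383600, MR3072281}, so the entire argument must take place inside the perfect-complex framework. Concretely, one must verify that (i) the $T$-modified complex really is perfect over $\Lambda(\mathcal{G})$ (not just over $\Lambda(\Gamma_0)$), (ii) the long-exact-sequence bookkeeping is compatible with the map $\rho$ in \eqref{eqn:Iwasawa-K-sequence} being zero, so that Fitting invariants of complexes are genuinely well-defined ideals rather than mere equivalence classes, and (iii) the coinvariants-and-minus-part construction only picks up the cohomology degrees one expects, so that the descended Fitting invariant ends up in the Pontryagin dual of $A_L^T$ alone and not a larger module involving archimedean or Iwasawa-theoretic error terms.
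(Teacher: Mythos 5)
Your high-level strategy matches the paper's: reinterpret the ray class group sequences via \'{e}tale/flat cohomology, pass to the infinite level, apply the EIMC, and co-descend using the Fitting invariant machinery. The finite-level step is essentially as you describe (Propositions \ref{prop:cohomology_of_GmT} and \ref{prop:coh-interpretation-L/K}). However, there are two genuine gaps. First, the passage to the infinite level is by \emph{direct} limit of discrete modules (cf.\ $C^{T\bullet}(L_\infty/K) := \varinjlim_n C^{T\bullet}(L_n/K)$), and the relation to the EIMC complex is the isomorphism $C_{S_p}^\bullet(L_\infty^+/K)(-1) \simeq C^{T\bullet}(L_\infty/K)^\vee$; your proposal to ``form the inverse limit'' of the finite-level complexes would be the Pontryagin-dual object and reverses the order of operations. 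Relatedly, the twist is \emph{not} the anti-involution $\sharp$ on $\Phi_S$ (which lives in $\zeta(\mathcal{Q}(\mathcal{G}))^\times$, not a $\C_p$-group algebra): it is the ring isomorphism $t_{\mathrm{cyc}}^1\colon \Lambda(\mathcal{G}^+)(-1) \simeq \Lambda(\mathcal{G})_-$, and Lemma \ref{lem:jchi-tcyc-composition} is exactly what converts $\phi(j_\chi(t_{\mathrm{cyc}}^1(\Phi_S)))$ into $L_{p,S}(0,\chi\omega)$. Your $\delta_T(0)\cdot\Phi_S^\sharp$ also conflates finite- and infinite-level quantities: at the Iwasawa level the Euler factor at $T$ is $\prod_{v\in T}\xi_v$ with $\xi_v = \nr(1-\chi_{\mathrm{cyc}}(\phi_{w_\infty})\phi_{w_\infty})$, and only after co-descent does it become $\delta_T(0)$.

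Second, and more seriously, the plan is missing the construction that makes the co-descent actually go through: one needs an explicit $\Lambda(\mathcal{G})_-$-module $Y_S^T(-1)$ of projective dimension $\leq 1$ sitting in a four-term exact sequence (the top row of diagram \eqref{eqn:complex-diagram}) whose middle two terms represent $C_S^\bullet(L_\infty^+/K)(-1)$, and which also surjects onto $\Hom(A_{L_\infty}^T,\Q_p/\Z_p)$. Producing this module requires the Ext-group diagram chase in Proposition \ref{prop:construction-of-YST(-1)} together with the Ritter--Weiss four-term sequence in Lemma \ref{lem:YST(-1)-has-proj-dim-at-most-one}; without it, the ``$T$-modified Iwasawa complex'' you invoke has no evident reason to be quasi-isomorphic to a module of projective dimension one, and Lemma \ref{lem:fitt-eq-complex-quad} and Theorem \ref{thm:Fitt-annihilation} cannot be brought to bear. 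You also omit the reduction step to $\zeta_p\in L$, which is needed so that the Kummer/cyclotomic duality statements (Corollary \ref{cor:isom-cT-complex}) make sense at all. Your three closing ``obstacles'' correctly identify the pressure points, but the proposal as written does not yet contain the ideas that resolve them.
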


\begin{remark}
When the classical Iwasawa $\mu$-invariant attached to $L(\zeta_{p})_{\infty}$ vanishes, 
Theorem \ref{thm:EIMC-implies-BS} recovers \cite[Theorem 4.5]{MR3072281},
which in turn is the non-abelian analogue of \cite[Theorem 6.5]{MR3383600}. 
The main reason why some version of the $\mu=0$ hypothesis is assumed in these results
is of course to ensure that the EIMC holds (see also \cite[Remark 4.3]{MR3749195}).
However, both results use a version of the EIMC that requires a $\mu=0$ hypothesis even for its formulation.
For this reason our proof is very different from (though partly inspired by) those in
\cite{MR3383600,MR3072281}. 
Note that in \cite[\S 4]{MR3072281} the identity \eqref{eqn:values-padic-complex} is implicitly assumed to hold.
\end{remark}

\begin{corollary}\label{cor:BS-monomial}
Let $L/K$ be a finite Galois CM-extension of number fields.
Let $p$ be an odd prime and 
let $S$ be a finite set of places of $K$ such that $S_{p} \cup S_{\ram}(L/K) \cup S_{\infty} \subseteq S$.
If $\Gal(L^{+}/K)$ is monomial and the EIMC holds for $L(\zeta_p)^{+}_{\infty} / K$ then both $BS(L/K,S,p)$ and $B(L/K,S,p)$ are true.
\end{corollary}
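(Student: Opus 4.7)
The plan is to derive the corollary as a direct synthesis of three ingredients already assembled in the paper: Theorem~\ref{thm:EIMC-implies-BS}, Lemma~\ref{lem:p-adic-vs-complex-Stickelberger}, and Proposition~\ref{prop:SBS_implies_BS}. No new mechanism is required; the content is essentially a matter of checking that the hypotheses line up.

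First, I would invoke Proposition~\ref{prop:SBS_implies_BS}: since $p$ is odd and $S \supseteq S_{\mathrm{ram}}(L/K) \cup S_{\infty}$, it suffices to establish, for every finite set $T$ of places of $K$ such that $\mathrm{Hyp}(S,T)$ is satisfied, the dual strong Brumer--Stark containment
\[
(\theta_{S}^{T})^{\sharp} \in \Fitt^{\max}_{\Z_{p}[G]_{-}}((A_{L}^{T})^{\vee}).
\]
Both conjectures $BS(L/K,S,p)$ and $B(L/K,S,p)$ then follow at once.

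Next, I would fix such a $T$ and apply Theorem~\ref{thm:EIMC-implies-BS} to the data $(L/K,S,T,p)$. The hypotheses of that theorem are met by standing assumption: we have $S_{p} \subseteq S$, $\mathrm{Hyp}(S,T)$ is satisfied by choice of $T$, and the EIMC holds for $L(\zeta_{p})^{+}_{\infty}/K$ by hypothesis of the corollary. The conclusion of the theorem is the $p$-adic analogue
\[
(\theta_{p,S}^{T})^{\sharp} \in \Fitt^{\max}_{\Z_{p}[G]_{-}}((A_{L}^{T})^{\vee}).
\]
To pass from $\theta_{p,S}^{T}$ to $\theta_{S}^{T}$, I would then apply Lemma~\ref{lem:p-adic-vs-complex-Stickelberger}: the group $G^{+} = \Gal(L^{+}/K)$ is monomial by hypothesis, so $\theta_{p,S}^{T} = \theta_{S}^{T}$, and consequently $(\theta_{p,S}^{T})^{\sharp} = (\theta_{S}^{T})^{\sharp}$. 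Substituting gives exactly the containment required to feed back into Proposition~\ref{prop:SBS_implies_BS}.

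There is no genuine obstacle here: the corollary is a clean formal consequence of the main theorem together with the monomiality lemma and the criterion of \S\ref{subsec:criterion-pont-fitt}. The only small point to keep track of is that $T$ varies in the hypothesis of Proposition~\ref{prop:SBS_implies_BS}, so Theorem~\ref{thm:EIMC-implies-BS} must be applied to \emph{each} admissible $T$; this is harmless because the conditions $S_{p} \cup S_{\mathrm{ram}}(L/K) \cup S_{\infty} \subseteq S$, monomiality of $G^{+}$, and validity of the EIMC for $L(\zeta_{p})^{+}_{\infty}/K$ are all independent of $T$, while the only $T$-dependent input, $\mathrm{Hyp}(S,T)$, is exactly the hypothesis under which Proposition~\ref{prop:SBS_implies_BS} needs to be checked.
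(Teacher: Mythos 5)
Your proposal is correct and takes exactly the approach of the paper, which simply cites the combination of Proposition~\ref{prop:SBS_implies_BS}, Lemma~\ref{lem:p-adic-vs-complex-Stickelberger}, and Theorem~\ref{thm:EIMC-implies-BS}. You have merely spelled out the routine verification of how these three ingredients fit together, including the harmless but correct observation that Theorem~\ref{thm:EIMC-implies-BS} must be applied to each admissible $T$ separately.
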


\begin{proof}
This is the combination of Proposition \ref{prop:SBS_implies_BS}, 
Lemma \ref{lem:p-adic-vs-complex-Stickelberger}, and  
Theorem \ref{thm:EIMC-implies-BS}.  
\end{proof}

\section{Iwasawa algebras and the main conjecture revisited}\label{subsec:iwasawa-algs-and-mcs-revisited}

\subsection{Certain maps between Iwasawa algebras} \label{subsec:Certain-maps}
We assume the setup and notation of \S \ref{subsec:certain-one-dim-Lie}--\S \ref{subsec:cyclotomic-char}.
Fix a character $\chi \in \Irr_{\Q_{p}^{c}}(\mathcal{G})$ and let $\eta$ be an irreducible constituent of $\res^{\mathcal{G}}_{H} \chi$.
Let $F/\Q_{p}$ be a finite extension over which both characters $\chi$ and $\eta$ have realisations.
Recalling the notation of \S \ref{subsec:idempotents} and \S \ref{subsec:cyclotomic-char} in particular, 
for $r \in \Z$ we define maps
\[
j_{\chi}^{r}: \zeta(\mathcal{Q}^{F} (\mathcal{G})) \twoheadrightarrow \zeta(\mathcal{Q}^{F} (\mathcal{G})e_{\chi}) \simeq \mathcal{Q}^{F}(\Gamma_{\chi}) \rightarrow  \mathcal{Q}^{F}(\Gamma_{K}),
\]
where the last arrow is induced by mapping $\gamma_{\chi}$ to 
$(u^{r}\gamma_{K})^{w_{\chi}}$.
Note that $j_{\chi}^{0} = j_{\chi}$ (see \S \ref{subsec:dets-and-nr}).

Now assume that $\zeta_{p} \in \mathcal{L}$.
For $s \in \Z$ let $x \mapsto t_{\mathrm{cyc}}^{s}(x)$ and $x \mapsto t_{\omega}^{s}(x)$
be the automorphisms on $\mathcal{Q}^{F}(\mathcal{G})$ induced by 
$g \mapsto \chi_{\mathrm{cyc}}^{s}(g)g$ 
and $g \mapsto \omega^{s}(g)g$ for $g \in \mathcal{G}$, respectively. 

\begin{lemma}  \label{lem:jchi-tcyc-composition}
Let $r,s \in \Z$. Then for every $x \in \zeta(\mathcal{Q}^{F}(\mathcal{G}))$
we have $j_{\chi}^{r}( t_{\mathrm{cyc}}^{s}(x)) = j_{\chi \omega^{s}}^{r+s}(x)$. 
\end{lemma}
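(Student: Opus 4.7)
The strategy is to reduce the lemma to the case $r=0$ and then verify the resulting identity via the uniqueness characterization of $\gamma_{\chi}$ from \S \ref{subsec:idempotents}. Let $\sigma_{r}$ denote the continuous $F$-algebra automorphism of $\mathcal{Q}^{F}(\Gamma_{K})$ sending $\gamma_{K} \mapsto u^{r}\gamma_{K}$; by construction $j_{\chi}^{r} = \sigma_{r} \circ j_{\chi}$, where $j_{\chi} := j_{\chi}^{0}$. Since $\sigma_{r+s} = \sigma_{r}\circ\sigma_{s}$, the desired identity reduces to
\[
j_{\chi} \circ t_{\mathrm{cyc}}^{s} = \sigma_{s} \circ j_{\chi\omega^{s}} \quad \text{on } \zeta(\mathcal{Q}^{F}(\mathcal{G})).
\]

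Next I would record the behaviour of $t_{\mathrm{cyc}}^{s}$ on the relevant idempotents. Factor $t_{\mathrm{cyc}}^{s} = t_{\omega}^{s}\circ t_{\kappa}^{s}$. Since $H$ is finite and $\kappa$ takes values in the torsion-free group $1+p\Z_{p}$, the restriction $\kappa|_{H}$ is trivial, so $t_{\kappa}^{s}$ fixes $F[H]$ pointwise. A direct manipulation of the formula for $e_{\chi}$ then yields $t_{\omega}^{s}(e_{\chi}) = e_{\chi\omega^{-s}}$, and hence $t_{\mathrm{cyc}}^{s}(e_{\chi\omega^{s}}) = e_{\chi}$. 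In particular, $t_{\mathrm{cyc}}^{s}$ restricts to a ring isomorphism $\zeta(\mathcal{Q}^{F}(\mathcal{G})e_{\chi\omega^{s}}) \cong \zeta(\mathcal{Q}^{F}(\mathcal{G})e_{\chi})$.

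The main step is the key identity
\[
t_{\mathrm{cyc}}^{s}(\gamma_{\chi\omega^{s}}) = u^{sw_{\chi}} \gamma_{\chi},
\]
which I would prove by checking that $u^{-sw_{\chi}} t_{\mathrm{cyc}}^{s}(\gamma_{\chi\omega^{s}})$ satisfies the three defining properties of $\gamma_{\chi}$. Writing $\gamma_{\chi\omega^{s}} = g'c'$ with $g' \mapsto \gamma_{K}^{w_{\chi}} \bmod H$ (using $w_{\chi\omega^{s}} = w_{\chi}$, because $\omega^{s}|_{H}$ is fixed by $\mathcal{G}$-conjugation, as $\omega$ extends to $\mathcal{G}$ and takes values in an abelian group, so stabilisers are preserved) and $c' \in (F[H]e_{\chi\omega^{s}})^{\times}$, the expansion
\[
t_{\mathrm{cyc}}^{s}(\gamma_{\chi\omega^{s}}) = \chi_{\mathrm{cyc}}^{s}(g')\, g' \cdot t_{\omega}^{s}(c') = u^{sw_{\chi}} \cdot g' \cdot \bigl(\omega^{s}(g') t_{\omega}^{s}(c')\bigr)
\]
(using $\kappa(g') = u^{w_{\chi}}$) makes centrality and the ``form $gc$'' property transparent. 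The delicate point, which I expect to be the main obstacle, is that $u^{-sw_{\chi}} t_{\mathrm{cyc}}^{s}(\gamma_{\chi\omega^{s}})$ acts trivially on $V_{\chi}$: identifying $V_{\chi\omega^{s}}$ with $V_{\chi}$ as an $F$-vector space but with $\mathcal{G}$-action scaled by $\omega^{s}$, a coefficient-by-coefficient check shows that the action of $g'c'$ on $V_{\chi\omega^{s}}$ equals $\omega^{s}(g')$ times the action of $g' t_{\omega}^{s}(c')$ on $V_{\chi}$; the trivial action of $\gamma_{\chi\omega^{s}}$ on $V_{\chi\omega^{s}}$ then forces $g' t_{\omega}^{s}(c')$ to act as $\omega^{-s}(g')$ on $V_{\chi}$, so that $t_{\mathrm{cyc}}^{s}(\gamma_{\chi\omega^{s}})$ acts as $\chi_{\mathrm{cyc}}^{s}(g')\omega^{-s}(g') = \kappa^{s}(g') = u^{sw_{\chi}}$, as required.

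With the key identity in hand, the remainder is formal: both $j_{\chi} \circ t_{\mathrm{cyc}}^{s}$ and $\sigma_{s} \circ j_{\chi\omega^{s}}$ factor through $\zeta(\mathcal{Q}^{F}(\mathcal{G})e_{\chi\omega^{s}}) \cong \mathcal{Q}^{F}(\Gamma_{\chi\omega^{s}})$, a field topologically generated as a continuous $F$-algebra by $\gamma_{\chi\omega^{s}}$. Both maps send $\gamma_{\chi\omega^{s}}$ to $u^{sw_{\chi}}\gamma_{K}^{w_{\chi}} = (u^{s}\gamma_{K})^{w_{\chi}}$, and must therefore coincide, completing the proof.
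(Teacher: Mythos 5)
Your proof is correct and follows essentially the same route as the paper's: both reduce the lemma to the key identity $t_{\mathrm{cyc}}^{s}(\gamma_{\chi\omega^{s}}) = u^{sw_{\chi}}\gamma_{\chi}$, proved by verifying the uniqueness characterization of $\gamma_{\chi}$, and then compare images of the topological generator $\gamma_{\chi\omega^{s}}$. You merely present it a little differently (handling general $s$ directly rather than via $s=1$ and iteration, and factoring out $\sigma_{r}$) and fill in the details of the trivial-action-on-$V_{\chi}$ check and the equality $w_{\chi\omega^{s}} = w_{\chi}$, which the paper labels ``easily checked'' and ``follows easily from the definitions''.
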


\begin{proof}
It follows easily from the definitions that $w_{\chi} = w_{\chi \omega}$.
We claim that
\begin{equation} \label{eqn:gammachi}
t_{\omega}^{1}(\gamma_{\chi \omega}) = \gamma_{\chi}.
\end{equation}
Write $\gamma_{\chi \omega} = g_{\chi \omega} c_{\chi \omega}$ with
$g_{\chi \omega} \in \mathcal{G}$ and $c_{\chi \omega} \in (F[H] e_{\chi \omega})^{\times}$
where $g_{\chi \omega}$ and $c_{\chi \omega}$ satisfy the defining properties of $\gamma_{\chi\omega}$ as given in \S \ref{subsec:idempotents}. 
Put $g_{\chi}' := g_{\chi \omega}$ and
$c_{\chi}' := \omega(g_{\chi \omega}) t_{\omega}^{1}(c_{\chi \omega})$.
It is then easily checked that $g_{\chi}' c_{\chi}'$ has the defining properties of $\gamma_{\chi}$, and thus
$t_{\omega}^{1}(\gamma_{\chi \omega}) = g_{\chi}' c_{\chi}' = \gamma_{\chi}$. 
This establishes \eqref{eqn:gammachi}.
Recalling that $u = \kappa(\gamma_{K})$ we compute
\[
t_{\mathrm{cyc}}^{1}(\gamma_{\chi \omega}) 
= \chi_{\mathrm{cyc}}(g_{\chi \omega}) g_{\chi \omega} t_{\omega}^{1} (c_{\chi \omega})
= u^{w_{\chi \omega}} t_{\omega}^{1}(\gamma_{\chi \omega})
=  u^{w_{\chi}} \gamma_{\chi},
\]
where we have used \eqref{eqn:gammachi} for the last equality. Finally, we have that
\[
j_{\chi}^{r} (t_{\mathrm{cyc}}^{s}(\gamma_{\chi \omega^{s}}))
= j_{\chi}^{r} (u^{w_{\chi} \cdot s} \gamma_{\chi})
=  u^{w_{\chi} \cdot s}(u^{r} \gamma_{K})^{w_{\chi}}
= (u^{r+s} \gamma_{K})^{w_{\chi}}
= j_{\chi \omega^{s}}^{r+s}(\gamma_{\chi \omega^{s}})
\]
for every $r,s \in \Z$ as desired.
\end{proof}

\subsection{Relation to the framework of \cite{MR2217048}}\label{subsec:relation-to-framework-five-authors}
We now discuss Conjecture \ref{conj:EIMC} within the framework of the theory of \cite[\S 3]{MR2217048}.
Let $p$ be an odd prime and let  $\mathcal L / K$ be an admissible one-dimensional $p$-adic Lie extension.
Let
\[
\pi: \mathcal{G} \rightarrow \GL_{n}(\mathcal{O})
\]
be a continuous homomorphism, where $\mathcal{O}=\mathcal{O}_{F}$ denotes the ring of integers of a finite extension $F$ of $\mathbb{Q}_{p}$
and $n$ is some positive integer.
There is a ring homomorphism
\begin{equation} \label{eqn:first_Phi}
\Phi_{\pi}: \Lambda(\mathcal{G}) \rightarrow M_{n\times n}(\Lambda^{\mathcal{O}}(\Gamma_{K}))
\end{equation}
induced by the continuous group homomorphism
\begin{eqnarray*}
\mathcal{G} & \rightarrow & (M_{n \times n}(\mathcal{O}) \otimes_{\Z_p} \Lambda(\Gamma_{K}))\mal = \GL_{n}(\Lambda^{\mathcal{O}}(\Gamma_{K}))\\
\sigma & \mapsto & \pi(\sigma) \otimes \overline{\sigma},
\end{eqnarray*}
where $\overline{\sigma}$ denotes the image of $\sigma$ in $\mathcal{G} / H = \Gamma_{K}$. 
By \cite[Lemma 3.3]{MR2217048} the
homomorphism \eqref{eqn:first_Phi} extends to a ring homomorphism
\[
\Phi_{\pi}: \mathcal{Q}(\mathcal{G}) \rightarrow M_{n\times n}(\mathcal{Q}^{F}(\Gamma_{K}))
\]
and this in turn induces a homomorphism
\[
\Phi_{\pi}': K_{1}(\mathcal{Q}(\mathcal{G})) \rightarrow 
K_{1}(M_{n\times n}(\mathcal{Q}^{F}(\Gamma_{K}))) = \mathcal{Q}^{F}(\Gamma_{K})\mal.
\]
Let $\aug: \Lambda^{\mathcal{O}}(\Gamma_{K}) \twoheadrightarrow \mathcal{O}$ be the augmentation map and put $\mathfrak{p} = \ker(\aug)$.
Writing $\Lambda^{\mathcal{O}}(\Gamma_{K})_{\mathfrak{p}}$ for the localisation of $\Lambda^{\mathcal{O}}(\Gamma_{K})$ at $\mathfrak{p}$, it is clear that $\aug$ naturally extends to a homomorphism $\aug: \Lambda^{\mathcal{O}}(\Gamma_{K})_{\mathfrak{p}} \rightarrow F$.
One defines an evaluation map
\begin{equation} \label{eqn:evaluation-map}
\begin{array}{rcl}
\phi: \mathcal{Q}^{F}(\Gamma_{K}) & \rightarrow & F \cup \{\infty\}\\
x & \mapsto & \left\{ \begin{array}{ll} \aug (x) & \mbox{ if } x \in \Lambda^{\mathcal{O}}(\Gamma_{K})_{\mathfrak{p}}\\
\infty & \mbox{ otherwise}. \end{array} \right.
\end{array}
\end{equation}
It is straightforward to show that for $r \in \Z$ we have
\begin{equation}\label{eq:PhiS-jr-p-adic}
\phi(j_{\chi}^{r}(\Phi_{S})) = L_{p,S}(1-r, \chi). 
\end{equation}

If $\zeta$ is an element of $K_{1}(\mathcal{Q}(\mathcal{G}))$, we define $\zeta(\pi)$ to be $\phi(\Phi_{\pi}'(\zeta))$.
Conjecture \ref{conj:EIMC} now implies that there is an element $\zeta_{S} \in K_{1}(\mathcal{Q}(\mathcal{G}))$ such that
$\partial(\zeta_{S}) = -[C_{S}^{\bullet}(\mathcal{L}/K)]$ and for each $r \geq 1$ divisible by $p-1$
and every irreducible Artin representation $\pi_{\chi}$ of $\mathcal{G}$ with character $\chi$ we have
\[
\zeta_{S}(\pi_{\chi}\kappa^{r}) = \phi(j_{\chi}^{r}(\Phi_{S})) = L_{p,S}(1-r, \chi) = L_{S}(1-r,\chi),
\]
where the first equality follows from \cite[Lemma 2.3]{MR2822866} (for the last equality see \S \ref{subsec:interpolation-property}).

\subsection{Non-commutative Fitting invariants over Iwasawa algebras}\label{subsec:noncomm-fitt-over-Iwasawa}
Let $p$ be an odd prime and let $\mathcal{G} = H \rtimes \Gamma$ be a one-dimensional $p$-adic Lie group.
Let $\Gamma' \simeq \Z_{p}$ be a normal subgroup of $\mathcal{G}$ such that $\Gamma' \cap H = 1$.
Then $\Gamma'$ is open in $\mathcal{G}$ and we set $G := \mathcal{G} / \Gamma'$.
Thus every irreducible character $\chi$ of $G$ may be viewed as an irreducible character of $\mathcal{G}$ with open kernel.
For any such character, let $e(\chi) := \chi(1) |G|^{-1} \sum_{g \in G} \chi(g^{-1}) g$ be the corresponding primitive central idempotent of
$\Q_{p}^{c}[G]$.
Let $\Lambda(\mathcal{G}):=\Z_{p} \llbracket \mathcal{G} \rrbracket$ be the Iwasawa algebra of $\mathcal{G}$. 

For the next result we note
that the maps $j_{\chi}$ and $\phi$
of \S \ref{subsec:Certain-maps} and \S
\ref{subsec:relation-to-framework-five-authors} are purely algebraic in
nature and thus do not depend on the underlying Galois extension.

\begin{prop}[{\cite[Theorem 6.4]{MR2609173}}]\label{prop:Fitting-descent}
Let $M$ be a finitely presented $\Lambda(\mathcal{G})$-module.
Let $\lambda \in \Fitt_{\Lambda(\mathcal{G})}^{\max}(M)$ and set
\[
     \overline{\lambda} \quad := 
     \sum_{\chi \in \Irr_{\Q_{p}^{c}}(G)} \phi(j_{\chi}(\lambda)) e(\chi) \in \zeta(\Q_{p}[G]).
\]
Then $ \overline{\lambda} \in  \Fitt_{\Z_{p}[G]}^{\max}(M_{\Gamma'})$,
\end{prop}

\section{Proof of Theorem \ref{thm:EIMC-implies-BS}: working at the finite level}\label{sec:proof-finite-level}

\subsection{\'{E}tale cohomology}
Let $L/K$ be a finite Galois extension of number fields with Galois group $G$ and recall the notation 
of \S \ref{subsec:ray-class-groups}.
We fix two finite disjoint nonempty sets $S$ and $T$ of places of $K$ such that $S$ contains $S_{\infty}$. 
We put $U_{S} := \Spec(\mathcal{O}_{L,S})$ and $Z_{T} := \Spec(\mathcal{O}_{L,S} / \mathfrak{M}_{L}^{T})$. 
Let $\mathbb{G}_{m,X}$ denote the \'{e}tale sheaf defined by the group of units of a scheme $X$. 
The closed immersion $\iota: Z_{T} \rightarrow U_{S}$ induces a canonical morphism 
$\mathbb{G}_{m,U_{S}} \rightarrow \iota_{\ast} \mathbb{G}_{m,Z_{T}}$, which can be shown to be surjective 
by considering stalks. 
Let $\mathbb{G}_{m,U_{S}}^{T}$ denote the kernel of this morphism; then 
we have an exact sequence of \'{e}tale sheaves
\begin{equation} \label{eqn:ses-etale-sheaves}
0 \longrightarrow \mathbb{G}_{m,U_{S}}^{T} \longrightarrow \mathbb{G}_{m,U_{S}}
\longrightarrow \iota_{\ast} \mathbb{G}_{m,Z_{T}} \longrightarrow 0.
\end{equation}
If $w$ is a finite place of $L$, we let $\mathcal{O}_{L,w}$ be the localisation of $\mathcal{O}_{L}$ at $w$.
We denote the field of fractions of the Henselisation $\mathcal{O}_{L,w}^{h}$ of $\mathcal{O}_{L,w}$ by $L_{w}$.
If $w$ is archimedean, we let $L_{w}$ be the completion of $L$ at $w$. In both cases we let $\Br(L_{w})$ be the Brauer group of $L_{w}$.

The main purpose of this subsection is to generalise the following result.

\begin{prop}\label{prop:cohomology_of_Gm}
Let $S$ be a finite set of places of $K$ containing $S_{\infty}.$ Then
\begin{eqnarray*}
H^{0}_{\et}(U_{S}, \mathbb{G}_{m,U_{S}}) & \simeq & E_{L,S},\\
H^{1}_{\et}(U_{S}, \mathbb{G}_{m,U_{S}}) & \simeq & \cl_{L,S},
\end{eqnarray*}
there is an exact sequence
\[
0 \longrightarrow H^{2}_{\et}(U_{S}, \mathbb{G}_{m,U_{S}}) \longrightarrow \bigoplus_{w \in S(L)} \Br(L_{w}) \longrightarrow \Q / \Z
\longrightarrow H^{3}_{\et}(U_{S}, \mathbb{G}_{m,U_{S}}) \longrightarrow 0,
\]
and for $i \geq 4$ we have
\[
H^{i}_{\et}(U_{S}, \mathbb{G}_{m,U_{S}}) \simeq \bigoplus_{w \in S_{\infty}(L) \atop w \mbox{ \tiny{real}}}
H^{i}_{\et}(\Spec(L_{w}), \mathbb{G}_{m,\Spec(L_{w})}).
\]
\end{prop}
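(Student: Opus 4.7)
The plan is to treat the low- and high-degree statements separately: degrees $0$ and $1$ are direct identifications, while degrees $\geq 2$ rest on an excision-type localization sequence combined with the fundamental exact sequence of global class field theory.

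For $H^{0}$, the identification is the definition of $\mathbb{G}_{m}$ as global units: $H^{0}_{\et}(U_{S}, \mathbb{G}_{m,U_{S}}) = \Gamma(U_{S}, \mathcal{O}_{U_{S}}^{\times}) = \mathcal{O}_{L,S}^{\times} = E_{L,S}$. For $H^{1}$, I would invoke the standard identification $H^{1}_{\et}(X, \mathbb{G}_{m}) = \mathrm{Pic}(X)$ (Hilbert 90 plus faithfully flat descent) and then note that $\mathcal{O}_{L,S}$ is the Dedekind domain obtained from $\mathcal{O}_{L}$ by inverting the finite primes in $S$; its Picard group is therefore precisely the $S$-class group $\cl_{L,S}$ in the sense of \S\ref{subsec:ray-class-groups}.

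For the higher cohomology, let $\eta: \Spec L \hookrightarrow U_{S}$ denote the generic point and, for each closed point $w$ of $U_{S}$ (equivalently, each finite place of $L$ not in $S(L)$), let $i_{w}$ denote the corresponding closed immersion. The valuations at the closed points fit into a short exact sequence of \'{e}tale sheaves
\[
0 \longrightarrow \mathbb{G}_{m,U_{S}} \longrightarrow \eta_{*}\mathbb{G}_{m,\Spec L} \longrightarrow \bigoplus_{w \notin S(L)} (i_{w})_{*}\Z \longrightarrow 0,
\]
and passing to cohomology via the Leray spectral sequence for $\eta$ (using Hilbert 90 to kill $R^{1}\eta_{*}\mathbb{G}_{m}$ on stalks, and using henselian/local class field theory together with the cohomological dimension of finite residue fields to control the higher local contributions) yields a comparison sequence relating $H^{i}_{\et}(U_{S}, \mathbb{G}_{m})$ to the Galois cohomology $H^{i}(G_{L}, L^{\mathrm{sep},\times})$ and to local terms indexed by $w \notin S(L)$.

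Combining this comparison with the Hilbert 90 identification $H^{2}(G_{L}, L^{\mathrm{sep},\times}) = \Br(L)$ and with the fundamental nine-term exact sequence
\[
0 \longrightarrow \Br(L) \longrightarrow \bigoplus_{w} \Br(L_{w}) \xrightarrow{\ \sum \mathrm{inv}_{w}\ } \Q / \Z \longrightarrow 0
\]
of global class field theory, I expect to extract precisely the four-term exact sequence in the statement for $i = 2, 3$. For $i \geq 4$, local class field theory gives $H^{i}(L_{w}, \mathbb{G}_{m}) = 0$ at every non-archimedean place and at every complex archimedean place, so only the real archimedean places contribute and the comparison sequence degenerates into the stated direct-sum isomorphism. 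The main technical obstacle, as I see it, is not any single step in isolation but the bookkeeping: one must verify that the connecting maps coming from the Leray spectral sequence align with the local invariant maps of class field theory, so that the four-term sequence in (iii) emerges cleanly and is not obstructed by a higher differential. Once this alignment is in place, the result follows from a diagram chase across the two exact sequences.
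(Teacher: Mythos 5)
Your proposal is correct and follows the standard argument; the paper itself gives no proof but simply cites Milne's \emph{Arithmetic Duality Theorems}, Chapter II, Proposition 2.1, and Milne's proof of that result is precisely the generic-point localization sequence $0 \to \mathbb{G}_{m,U_{S}} \to \eta_{*}\mathbb{G}_{m} \to \bigoplus_{w} (i_{w})_{*}\Z \to 0$ (with $R^{q}\eta_{*}\mathbb{G}_{m}=0$ for $q\geq 1$) combined with the nine-term exact sequence of class field theory, exactly as you describe. The bookkeeping you flag at the end --- verifying that the localization boundary maps agree with the local invariant maps so that the terms indexed by $w\notin S(L)$ trade off against the full Brauer sequence to leave exactly $\bigoplus_{w\in S(L)}\Br(L_{w})$ --- is indeed the only delicate step, and is carried out in Milne's proof.
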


\begin{proof}
This is \cite[Chapter II, Proposition 2.1]{MR2261462}.
\end{proof}

\begin{lemma}\label{lem:qis-i-zt}
Let $S$ and $T$ be as above. 
Then $R\Gamma_{\et}(U_{S}, \iota_{\ast}\mathbb{G}_{m,Z_{T}}) \simeq R\Gamma_{\et}(Z_{T}, \mathbb{G}_{m,Z_{T}})$.
Moreover, $H^{0}_{\et}(Z_{T},\mathbb{G}_{m,Z_{T}}) \simeq (\mathcal{O}_{L,S} / \mathfrak{M}_{L}^{T})^{\times}$ and 
$H^{i}_{\et}(Z_{T},\mathbb{G}_{m,Z_{T}})=0$ for $i \geq 1$.
\end{lemma}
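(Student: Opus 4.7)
The plan is to reduce both claims to standard facts by exploiting that $\iota$ is a closed immersion and that $Z_{T}$ is zero-dimensional. For the quasi-isomorphism, first I would check that $\iota_{*}$ is exact on \'{e}tale sheaves: since $\iota$ is a closed (hence finite) immersion, the stalk of $\iota_{*}\mathcal{F}$ at a geometric point $\bar{x}$ of $U_{S}$ equals $\mathcal{F}_{\bar{x}}$ when $\bar{x}$ factors through $Z_{T}$ and vanishes otherwise, so exactness can be checked stalkwise. Consequently $R^{q}\iota_{*}\mathbb{G}_{m,Z_{T}} = 0$ for $q \geq 1$, and the equality of derived functors $R\Gamma_{\et}(U_{S}, -) \circ R\iota_{*} = R\Gamma_{\et}(Z_{T}, -)$ (equivalently, the collapse of the relevant Grothendieck spectral sequence) yields the asserted quasi-isomorphism.

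For the computation on $Z_{T}$, the assumption $S \cap T = \emptyset$ means that the primes $\mathfrak{P}_{w}$ for $w \in T(L)$ correspond to distinct maximal ideals of $\mathcal{O}_{L,S}$, so the Chinese Remainder Theorem gives
\[
\mathcal{O}_{L,S}/\mathfrak{M}_{L}^{T} \;\simeq\; \prod_{w \in T(L)} \mathbb{F}_{w},
\]
where $\mathbb{F}_{w}$ denotes the residue field of $L$ at $w$. Thus $Z_{T}$ is a finite disjoint union of spectra of finite fields, and its \'{e}tale cohomology splits correspondingly. The $H^{0}$ claim is then immediate from $\Gamma(Z_{T}, \mathbb{G}_{m,Z_{T}}) = (\mathcal{O}_{L,S}/\mathfrak{M}_{L}^{T})^{\times}$, while for $i \geq 1$ I would identify $H^{i}_{\et}(\Spec(\mathbb{F}_{w}), \mathbb{G}_{m})$ with the continuous Galois cohomology $H^{i}(G_{\mathbb{F}_{w}}, \overline{\mathbb{F}_{w}}^{\times})$ of the procyclic group $G_{\mathbb{F}_{w}} \simeq \widehat{\Z}$ acting on the discrete torsion module $\overline{\mathbb{F}_{w}}^{\times}$: Hilbert 90 handles $i = 1$, and the fact that $\mathrm{cd}(\widehat{\Z}) = 1$ on torsion modules disposes of $i \geq 2$ (with the vanishing of the Brauer group of a finite field as the concrete incarnation at $i = 2$).

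I do not foresee any substantial obstacle, as the argument is essentially a standard \'{e}tale cohomology computation; the only step deserving a moment's explicit care is the exactness of $\iota_{*}$, which is settled by the stalk calculation described above.
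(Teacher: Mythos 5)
Your proposal is correct and follows essentially the same route as the paper: establish exactness of $\iota_{*}$ (you via a stalk computation, the paper via a citation to Milne), identify $Z_{T}$ as a disjoint union of spectra of finite fields, and invoke the vanishing of the higher cohomology of $\mathbb{G}_{m}$ over a finite field. Your extra detail (Hilbert 90 and the bound on the cohomological dimension of $\widehat{\Z}$ acting on the torsion module $\overline{\mathbb{F}_{w}}^{\times}$) merely spells out the vanishing that the paper asserts, so the two arguments coincide in substance.
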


\begin{proof}
For a finite field $\mathbb{F}$, the cohomology of $R\Gamma_{\et}(\Spec(\mathbb{F}), \mathbb{G}_{m,\Spec(\mathbb{F})})$
vanishes outside degree $0$ and $H_{\et}^{0}(\Spec(\mathbb{F}), \mathbb{G}_{m,\Spec(\mathbb{F})}) \simeq \F^{\times}$.
Moreover, we have an isomorphism 
\[
R\Gamma_{\et}(Z_{T}, \mathbb{G}_{m,Z_{T}})
\simeq \bigoplus_{w \in T(L)} R\Gamma_{\et}(\Spec(L(w)), \mathbb{G}_{m,\Spec(L(w))}),
\]
where $L(w)$ denotes the finite field $\mathcal{O}_{L} / \mathfrak{P}_{w}$, and so the second claim follows.
Note that the natural map 
$H^{0}_{\et}(Z_{T},\mathbb{G}_{m,Z_{T}}) \rightarrow H^{0}_{\et}(U_{S},\iota_{*}\mathbb{G}_{m,Z_{T}})$ is in fact an isomorphism.
Furthermore, the functor $\iota_{\ast}$ is exact for the \'{e}tale topology by \cite[Chapter II, Corollary 3.6]{MR559531}.
Thus the universal property of derived functors gives the first claim.
\end{proof}

Let $\Sigma$ be a subset of $S$ containing $S_{\infty}$.
We shall consider the natural map 
\begin{equation}\label{eqn:define-psi}
\psi_{\Sigma,S}^{T} = \psi_{\Sigma,S}^{T}(L):
R\Gamma_{\et}(U_{\Sigma}, \mathbb{G}_{m, U_{\Sigma}}^{T}) \longrightarrow R\Gamma_{\et}(U_{S}, \mathbb{G}_{m, U_{S}}^{T}).
\end{equation}
Here, the set $T$ may be empty, in which case we put $\mathbb{G}_{m, U_{S}}^{\emptyset} := \mathbb{G}_{m, U_{S}}$ 
and similarly with $\mathbb{G}_{m, U_{\Sigma}}$.
Sequence \eqref{eqn:ses-etale-sheaves} and Lemma \ref{lem:qis-i-zt} for $S$ and $\Sigma$ 
induce a commutative diagram
\begin{equation}\label{eqn:change-sigma-to-S}
\xymatrix{
{R\Gamma_{\et}(U_{\Sigma}, \mathbb{G}_{m, U_{\Sigma}}^{T})} \ar[r] \ar[d]^{\psi_{\Sigma,S}^{T}} &
R\Gamma_{\et}(U_{\Sigma}, \mathbb{G}_{m, U_{\Sigma}}) \ar[r] \ar[d]^{\psi_{\Sigma,S}} & R\Gamma_{\et}(Z_{T}, \mathbb{G}_{m,Z_{T}}) \ar@{=}[d]\\
{R\Gamma_{\et}(U_{S}, \mathbb{G}_{m, U_{S}}^{T})} \ar[r] &
R\Gamma_{\et}(U_{S}, \mathbb{G}_{m, U_{S}}) \ar[r] & R\Gamma_{\et}(Z_{T}, \mathbb{G}_{m,Z_{T}})
} 
\end{equation}
where the rows are exact triangles.
Let $D_{\Sigma,S}^{T} = D_{\Sigma,S}^{T}(L)$ be the cone of $\psi_{\Sigma,S}^{T}$. The diagram shows that
$D_{\Sigma,S}^{T}$ does not in fact depend on $T$ and thus we denote it by $D_{\Sigma,S}$.

\begin{prop}\label{prop:cohomology-of-cone}
Let $S,T$ and $\Sigma$ be as above. We have
\[
H^{i}(D_{\Sigma,S}^{T}(L)) \simeq H^{i}(D_{\Sigma,S}(L)) \simeq \left\{
\begin{array}{lll}
\Z[S(L)-\Sigma(L)] & \mbox{ if } & i=0\\
\Q / \Z[S(L)-\Sigma(L)] & \mbox{ if } & i=2\\
0 & \mbox{ if } & i\not=0,2.\\
\end{array}
\right.
\]
\end{prop}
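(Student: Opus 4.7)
The plan is to identify the cone $D_{\Sigma,S}$ with a local cohomology complex supported on the primes of $S(L) - \Sigma(L)$, and then compute locally prime by prime. Since the paragraph preceding the proposition has already established that $D^{T}_{\Sigma,S} \simeq D_{\Sigma,S}$, it suffices to compute the latter. The open immersion $j: U_{S} \hookrightarrow U_{\Sigma}$ has closed complement $Y$ supported precisely on the primes $w \in S(L) - \Sigma(L)$, all of which are finite since $\Sigma \supseteq S_{\infty}$. The standard localisation/purity exact triangle in \'etale cohomology then reads
\[
R\Gamma_{Y}(U_{\Sigma}, \mathbb{G}_{m}) \longrightarrow R\Gamma_{\et}(U_{\Sigma}, \mathbb{G}_{m}) \stackrel{\psi_{\Sigma,S}}{\longrightarrow} R\Gamma_{\et}(U_{S}, \mathbb{G}_{m}) \stackrel{+1}{\longrightarrow},
\]
giving $D_{\Sigma,S} \simeq R\Gamma_{Y}(U_{\Sigma}, \mathbb{G}_{m})[1]$, and hence $H^{i}(D_{\Sigma,S}) \simeq H^{i+1}_{Y}(U_{\Sigma}, \mathbb{G}_{m})$.

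By excision, $R\Gamma_{Y}(U_{\Sigma}, \mathbb{G}_{m})$ decomposes as a direct sum over $w \in S(L) - \Sigma(L)$ of $R\Gamma_{\{w\}}(\Spec(\mathcal{O}_{L,w}^{h}), \mathbb{G}_{m})$, using the standard fact that local cohomology at a closed point depends only on the Henselisation. For each such $w$ we exploit the localisation triangle for the open immersion $\Spec(L_{w}) \hookrightarrow \Spec(\mathcal{O}_{L,w}^{h})$. Since $\mathcal{O}_{L,w}^{h}$ is Henselian with finite residue field $L(w)$, its \'etale $\mathbb{G}_{m}$-cohomology reduces to Galois cohomology of $L(w)$ and is concentrated in degree $0$, giving $\mathcal{O}_{L,w}^{h,\times}$. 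For the local field $L_{w}$ we have $H^{0} = L_{w}^{\times}$, $H^{1} = 0$ by Hilbert 90, $H^{2} = \Br(L_{w}) \cong \Q/\Z$ by local class field theory, and $H^{i} = 0$ for $i \geq 3$. The resulting long exact sequence yields $H^{1}_{\{w\}} \cong L_{w}^{\times}/\mathcal{O}_{L,w}^{h,\times} \cong \Z$ (the valuation map), $H^{2}_{\{w\}} = 0$, $H^{3}_{\{w\}} \cong \Q/\Z$, and vanishing in all other degrees.

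Summing over the primes in $S(L) - \Sigma(L)$ and re-indexing by the degree shift $[1]$ produces exactly the claimed values $H^{0}(D_{\Sigma,S}) = \Z[S(L) - \Sigma(L)]$ and $H^{2}(D_{\Sigma,S}) = \Q/\Z[S(L) - \Sigma(L)]$, with all other cohomology vanishing. The main step that requires some care, rather than being an obstacle per se, is invoking correctly the package of standard results that \'etale $\mathbb{G}_{m}$-cohomology on a Henselian local ring with finite residue field coincides with that of the residue field and that local cohomology with support at a closed point is preserved under Henselisation; once these are granted, the remainder is a direct unwinding of the two localisation long exact sequences.
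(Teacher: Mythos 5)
Your proof is correct and follows essentially the same route as the paper: identify the cone $D_{\Sigma,S}$ with local cohomology supported on the closed complement $U_\Sigma - U_S$ (the paper cites \cite[Chapter III, Proposition 1.25]{MR559531}), decompose by excision as a sum over the primes in $S(L)-\Sigma(L)$ (the paper cites \cite[Chapter III, Corollary 1.28]{MR559531}), and compute the local cohomology at each Henselian local ring (the paper cites \cite[Chapter II, Proposition 1.5]{MR2261462}, whose content is exactly the Hilbert~90/Brauer group calculation you carry out by hand). One small caveat worth noting: you write $D_{\Sigma,S}\simeq R\Gamma_Y(U_\Sigma,\mathbb{G}_m)[1]$, whereas the paper has a shift $[-1]$; with the standard convention $H^i(C[n])=H^{n+i}(C)$, your shift is the one that matches the claimed cohomology (placing $\Z[S(L)-\Sigma(L)]$ in degree $0$), so this appears to be a sign slip in the paper rather than a gap in your argument.
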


\begin{proof}
As $D_{\Sigma,S}^{T}=D_{\Sigma,S}$ does not depend on $T$, we can and do assume that $T$ is empty.
Let $Z_{\Sigma,S} := U_{\Sigma} - U_{S}$; then $Z_{\Sigma,S}$ is a closed subscheme of $U_{\Sigma}$ and
by \cite[Chapter III, Proposition 1.25]{MR559531} we have an isomorphism
\[
D_{\Sigma,S} \simeq R\Gamma_{Z_{\Sigma,S}}(U_{\Sigma}, \mathbb{G}_{m, U_{\Sigma}})[-1],
\]
where the righthand side denotes cohomology with support on $Z_{\Sigma,S}$.
We now apply \cite[Chapter III, Corollary 1.28]{MR559531} and \cite[Chapter II, Proposition 1.5]{MR2261462} to
obtain the desired result.
\end{proof}

\begin{prop}\label{prop:cohomology_of_GmT}
Let $S$ and $T$ be as above. Then
\[
H^{i}_{\et}(U_{S}, \mathbb{G}_{m,U_{S}}^{T}) \simeq \left\{
\begin{array}{lll}
E_{L,S}^{T} & \mbox{ if } & i=0\\
\cl_{L,S}^{T} & \mbox{ if } & i=1\\
H^{i}_{\et}(U_{S}, \mathbb{G}_{m,U_{S}}) & \mbox{ if } & i \geq 2.\\
\end{array}
\right.
\]
\end{prop}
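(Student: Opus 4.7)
The plan is to extract the long exact cohomology sequence from the short exact sequence of étale sheaves \eqref{eqn:ses-etale-sheaves} on $U_S$, and then identify terms by comparing with the classical sequence \eqref{eqn:ray_class_sequence}.

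First, applying $R\Gamma_{\et}(U_S, -)$ to \eqref{eqn:ses-etale-sheaves} and using Lemma \ref{lem:qis-i-zt}, which gives $R\Gamma_{\et}(U_S, \iota_*\mathbb{G}_{m,Z_T}) \simeq R\Gamma_{\et}(Z_T, \mathbb{G}_{m,Z_T})$ concentrated in degree $0$ with $H^0$ equal to $(\mathcal{O}_{L,S}/\mathfrak{M}_L^T)^{\times}$, yields the long exact sequence
\[
0 \to H^0_{\et}(U_S, \mathbb{G}_{m,U_S}^T) \to E_{L,S} \to (\mathcal{O}_{L,S}/\mathfrak{M}_L^T)^{\times} \to H^1_{\et}(U_S, \mathbb{G}_{m,U_S}^T) \to \cl_{L,S} \to 0,
\]
together with isomorphisms $H^i_{\et}(U_S, \mathbb{G}_{m,U_S}^T) \simeq H^i_{\et}(U_S, \mathbb{G}_{m,U_S})$ for $i \geq 2$. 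Here I use Proposition \ref{prop:cohomology_of_Gm} to substitute $H^0_{\et}(U_S, \mathbb{G}_{m,U_S}) = E_{L,S}$ and $H^1_{\et}(U_S, \mathbb{G}_{m,U_S}) = \cl_{L,S}$. This immediately settles the $i \geq 2$ cases.

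For $i = 0$, by construction $\mathbb{G}_{m,U_S}^T$ is the subsheaf of units which are trivial modulo $\mathfrak{M}_L^T$, so the global sections are precisely those $S$-units of $L$ congruent to $1$ modulo $\mathfrak{M}_L^T$; that is, $H^0_{\et}(U_S, \mathbb{G}_{m,U_S}^T) = E_{L,S}^T$, and the connecting map $E_{L,S} \to (\mathcal{O}_{L,S}/\mathfrak{M}_L^T)^{\times}$ agrees with reduction modulo the modulus. For $i = 1$, I compare our five-term sequence to the five-term classical sequence \eqref{eqn:ray_class_sequence}
\[
0 \to E_{L,S}^T \to E_{L,S} \to (\mathcal{O}_{L,S}/\mathfrak{M}_L^T)^{\times} \xrightarrow{\nu} \cl_{L,S}^T \to \cl_{L,S} \to 0.
\]
The outer three terms, together with the maps between them, coincide in the two sequences (the reduction map on units is the same, and the surjection onto $\cl_{L,S}$ is the natural quotient dropping the modulus). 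A standard diagram chase then forces the middle term $H^1_{\et}(U_S, \mathbb{G}_{m,U_S}^T)$ to be canonically isomorphic to $\cl_{L,S}^T$, with the étale connecting map $(\mathcal{O}_{L,S}/\mathfrak{M}_L^T)^{\times} \to H^1_{\et}(U_S, \mathbb{G}_{m,U_S}^T)$ corresponding to the map $\nu$ sending a residue to the class of any principal lift.

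The only point requiring care is the naturality needed to identify the étale boundary map with $\nu$; this is a matter of unwinding the definition of $H^1_{\et}$ in terms of $\mathbb{G}_m^T$-torsors and recognising such a torsor as an invertible ideal trivialised modulo $\mathfrak{M}_L^T$ together with a chosen trivialisation, which is exactly the data parametrised by $\cl_{L,S}^T$. Once that identification is made, the five-lemma closes the argument.
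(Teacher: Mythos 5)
Your argument runs into a genuine gap precisely at the $i=1$ case, and it is the very issue the paper flags. After extracting the long exact sequence, you have two five-term exact sequences of $\Z[G]$-modules
\[
0 \to E_{L,S}^{T} \to E_{L,S} \to (\mathcal{O}_{L,S}/\mathfrak{M}_L^T)^{\times} \to H^{1}_{\et}(U_{S},\mathbb{G}_{m,U_{S}}^{T}) \to \cl_{L,S} \to 0
\]
and the classical sequence \eqref{eqn:ray_class_sequence}, with the outer three terms and the maps between them agreeing. But that alone does \emph{not} determine the fourth term, not even up to abstract isomorphism: both $H^{1}_{\et}(U_{S},\mathbb{G}_{m,U_{S}}^{T})$ and $\cl_{L,S}^{T}$ sit in an extension $0 \to \coker \to D \to \cl_{L,S} \to 0$ with the same cokernel module, and two such extensions need not be isomorphic (for instance $\Z/4$ and $\Z/2\times\Z/2$ both fit a four-term exact sequence with outer terms $\Z/2$, $\Z/4$, $\Z/2$ in suitable positions). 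The five lemma requires a commutative ladder, i.e. an a priori map $H^{1}_{\et}(U_{S},\mathbb{G}_{m,U_{S}}^{T}) \to \cl_{L,S}^{T}$ making all squares commute, and you never construct one. The paper explicitly records that this comparison only gives equality of cardinalities, not an isomorphism.

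Your fallback --- that one can identify $H^1_{\et}$ with isomorphism classes of $\mathbb{G}_m^T$-torsors and recognise such a torsor as a fractional ideal with a chosen trivialisation modulo $\mathfrak{M}_L^T$ --- is plausible as a heuristic and would indeed supply the missing map, but as written it is an assertion, not an argument: it is exactly the substantive step that has to be proved, and you don't carry it out. If you intend to go that route, the diagram-chase preamble becomes superfluous and you must instead establish the torsor classification directly. The paper sidesteps this entirely by an indirect but rigorous device: enlarge $S$ to a set $S'$ disjoint from $T$ with $\cl_{L,S'}^{T}=0$, which by the same cardinality argument forces $H^1_{\et}(U_{S'},\mathbb{G}_{m,U_{S'}}^{T})=0$; then the long exact sequence from the map $\psi^T_{\Sigma,S'}$ in \eqref{eqn:define-psi}, together with the cohomology of the cone $D_{\Sigma,S'}$ computed in Proposition \ref{prop:cohomology-of-cone}, produces the four-term sequence \eqref{eqn:seq-induced-by-sigma-S-cone}, and comparison with \eqref{eqn:ray_class_sequence_ZS} --- where now the outer \emph{three} of the \emph{four} terms together with \emph{all} the maps are transparently the same --- yields the isomorphism as a genuine cokernel identification, not a five-lemma appeal. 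You should either rebuild your argument along these lines, or fill in the torsor classification rigorously.
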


\begin{proof}
By Proposition \ref{prop:cohomology_of_Gm} and Lemma \ref{lem:qis-i-zt}, the 
long exact sequence of cohomology groups induced by \eqref{eqn:ses-etale-sheaves} yields an exact sequence
\[
0 \longrightarrow H^{0}_{\et}(U_{S}, \mathbb{G}_{m,U_{S}}^{T}) \longrightarrow E_{L,S} \longrightarrow (\mathcal{O}_{L,S} / \mathfrak{M}_{L}^{T})\mal
\longrightarrow H^{1}_{\et}(U_{S}, \mathbb{G}_{m,U_{S}}^{T}) \longrightarrow \cl_{L,S} \longrightarrow 0
\]
and isomorphisms $ H^{i}_{\et}(U_{S}, \mathbb{G}_{m,U_{S}}^{T}) \simeq H^{i}_{\et}(U_{S}, \mathbb{G}_{m,U_{S}})$ for all $i \geq 2$.
It follows from this and \eqref{eqn:ray_class_sequence} that 
$H^{0}_{\et}(U_{S}, \mathbb{G}_{m,U_{S}}^{T})  \simeq E_{L,S}^{T}$, and that
$H^{1}_{\et}(U_{S}, \mathbb{G}_{m,U_{S}}^{T})$ and $\cl_{L,S}^{T}$ have the same cardinality;
it remains to show that they are in fact isomorphic. 

We now change notation as follows: let $\Sigma=S$ for the choice of $S$ as in the statement of the proposition; and
enlarge $S$ in such a way that $S$ is finite and disjoint from $T$ and that $\cl_{L,S}^{T}$ vanishes.
The same reasoning as above shows that $H^{1}_{\et}(U_{S}, \mathbb{G}_{m,U_{S}}^{T})$ also vanishes.
Therefore the long exact cohomology sequence induced 
by \eqref{eqn:define-psi} yields an exact sequence
\begin{equation}\label{eqn:seq-induced-by-sigma-S-cone}
0 \longrightarrow E_{L, \Sigma}^T \longrightarrow E_{L,S}^T \longrightarrow
\Z [S(L) - \Sigma(L)] \longrightarrow H_{\et}^{1}(U_{\Sigma},\mathbb{G}_{m,U_{\Sigma}}^{T}) \longrightarrow 0,
\end{equation}
where $H^{0}(D_{\Sigma,S}^{T}(L)) \simeq \Z [S(L) - \Sigma(L)]$ by Proposition \ref{prop:cohomology-of-cone}.
Comparing \eqref{eqn:seq-induced-by-sigma-S-cone} to \eqref{eqn:ray_class_sequence_ZS} yields 
$H_{\et}^{1}(U_{\Sigma},\mathbb{G}_{m,U_{\Sigma}}^{T}) \simeq \cl_{L,\Sigma}^{T}$, as desired.
\end{proof}

\begin{remark}
The proof of Proposition \ref{prop:cohomology_of_GmT} shows that the long exact sequence in cohomology induced by \eqref{eqn:define-psi} 
yields an exact sequence whose first terms coincide with \eqref{eqn:ray_class_sequence_ZS}. 
Similarly, taking global sections in \eqref{eqn:ses-etale-sheaves} gives a long exact sequence
in cohomology whose first terms coincide with \eqref{eqn:ray_class_sequence}.
\end{remark}

\subsection{Flat cohomology}
Let $X$ be an affine scheme and consider the  multiplicative group scheme $\mathbb{G}_{m/X}$ over  $X$.
We let $\mu_{n / X}$ be the kernel of multiplication by $n \in \N$ and for a prime $p$ we put
$\mu_{p^{\infty} /X} := \varinjlim_{j} \mu_{p^{j} / X}$ which is an ind-$X$-group scheme.

By the flat site on $X$ we shall mean the site of all quasi-finite flat schemes of finite presentation over $X$
with the $fppf$-topology; so coverings are surjective families of flat morphisms that are locally of finite presentation.
Note that our definition agrees with the $fpqf$-site in \cite{MR689645}, but differs from the flat site in
\cite{MR559531}; however, this will play no decisive role in the following by \cite[Chapter III, Proposition 3.1]{MR559531}.
The group schemes $\mathbb{G}_{m/X}$ and $\mu_{n / X}$ represent abelian sheaves for the flat site on $X$
and we denote the corresponding cohomology groups by $H^{i}_{\fl}(X,\mathbb{G}_{m})$ and 
$H^{i}_{\fl}(X, \mu_{n})$, respectively.  As $\mathbb{G}_{m}$ is smooth over $X$, we have an isomorphism
\begin{equation} \label{eqn:flat-etale-Gm}
	R\Gamma_{\fl}(X, \mathbb{G}_{m}) \simeq R\Gamma_{\et}(X, \mathbb{G}_{m,X})
\end{equation}
by \cite[Chapter III, Theorem 3.9]{MR559531}. 

For a prime $p$ we write $\mu_{p}(L)$ for the group of $p$-power roots of unity in $L$.
We now specialise to the case $X = U_{L} := U_{S_{\infty}}$ and $p$ odd.
Then $\mu_{p^{\infty} / U_{L}}$ also
represents a sheaf for the flat site on $U_{L}$. As the Kummer sequences
\[
 	0 \longrightarrow \mu_{p^n} \longrightarrow \mathbb{G}_{m} \stackrel{p^n}{\longrightarrow} \mathbb{G}_{m} \longrightarrow 0
\]
are exact for the flat site on $U_L$ for all $n \in \N$, we have an exact triangle
\begin{equation} \label{eqn:flat_triangle}
	R\Gamma_{\fl}(U_{L},\mu_{p^{\infty}}) \longrightarrow R\Gamma_{\fl}(U_{L}, \mathbb{G}_{m})(p) \longrightarrow 
	\Q_{p} \otimes^{\mathbb{L}}_{\Z_{p}} R\Gamma_{\fl}(U_{L}, \mathbb{G}_{m})(p),
\end{equation}
where we use the notation $C^{\bullet}(p) := \Z_{p}\otimes^{\mathbb{L}}_{\Z} C^{\bullet} \in \mathcal{D}(\Z_{p}[G])$
for any complex $C^{\bullet}$ in $\mathcal{D}(\Z[G])$.
By Proposition \ref{prop:cohomology_of_Gm}
and \eqref{eqn:flat-etale-Gm} we thus obtain isomorphisms
\[
	H^{i}_{\fl}(U_L, \mu_{p^{\infty}}) \simeq \left\{ 
		\begin{array}{lll}
		\mu_{p}(L) & \mbox{ if } & i=0\\	
		\Q_{p} / \Z_{p} & \mbox{ if } & i=3\\
		0 & \mbox{ if } & i \not= 0,1,3
		\end{array}\right.
\]
and an exact sequence
\[
	0 \longrightarrow \mathcal{O}_{L}^{\times} \otimes_{\Z} \Q_{p} / \Z_{p} \longrightarrow H^{1}_{\fl}(U_L, \mu_{p^{\infty}})
	\longrightarrow \cl_{L} \otimes_{\Z} \Z_{p} \longrightarrow 0.
\]

\subsection{A reduction step} We now begin the proof of the main theorem of this article.

\begin{proof}[Proof of Theorem \ref{thm:EIMC-implies-BS}]
This proof will occupy the rest of \S \ref{sec:proof-finite-level} and all of \S \ref{sec:proof-infinite-level}.
We first prove a reduction step that will allow us to make certain simplifying assumptions.

Put $L' := L(\zeta_{p})$ and $C := \Gal(L'/L)$. 
Then $C$ is a cyclic group whose order divides $p-1$.
Let $N:A_{L'}^{T} \rightarrow A_{L}^{T}$ and $i:A_{L}^{T} \rightarrow A_{L'}^{T}$ 
denote the homomorphisms induced by the norm and inclusion maps on ideals, respectively.
Then $N \circ i : A_{L}^{T} \rightarrow A_{L}^{T}$ is multiplication by $|C|$ and thus is an isomorphism of $p$-groups.
In particular, $i$ is injective and $N$ is surjective. 
Let $\Delta(C)$ denote the kernel of the augmentation map $\Z[C] \twoheadrightarrow \Z$ which maps each $c \in C$ to $1$. 
The composite map $i \circ N: A^{T}_{L'} \rightarrow A^{T}_{L'}$
(also referred to as a norm map) is given by multiplication by $\sum_{c \in C} c$.
Since the orders of $C$ and $A^{T}_{L'}$ are coprime, $A^{T}_{L'}$ is cohomologically trivial as a $C$-module and thus
$\ker(i \circ N) =  \Delta(C) A_{L'}^{T}$. 
As $i$ is injective we thus have  $\ker(N) =  \Delta(C) A_{L'}^{T}$ and since $N$ is surjective we conclude that it induces an isomorphism
 $(A_{L'}^{T})_{C} \simeq A_{L}^{T}$.
Using standard properties of Pontryagin duals and (co)-invariants, we conclude that 
\[
((A_{L'}^{T})^{\vee})_{C} \simeq ((A_{L'}^{T})^{C})^{\vee} \simeq ((A_{L'}^{T})_{C})^{\vee} \simeq (A_{L}^{T})^{\vee}.
\]
The idempotent $e_{C} := |C|^{-1} \sum_{c \in C} c$ belongs to the group ring $\Z_{p}[\Gal(L'/K)]$ and so
\[
\Fitt^{\max}_{\Z_{p}[G]_{-}}((A_{L}^{T})^{\vee}) = e_{C} \Fitt^{\max}_{\Z_{p}[\Gal(L'/K)]_{-}}((A_{L'}^{T})^{\vee})
\]
by Lemma \ref{lem:Fitting-properties} (iii).
As Stickelberger elements also behave well under base change, i.e., $e_{C} \theta_{p,S}^{T}(L'/K) = \theta_{p,S}^{T}(L/K)$, we may assume without loss of generality that $\zeta_{p} \in L$.
Note that as we are considering the $p$-parts we only need that $E^{T}_{L,S}(p)$ is torsionfree,
as opposed to the stronger requirement that $E^{T}_{L,S}$ is torsionfree.
Since $S_{p} \subseteq S$ and $\Hyp(S,T)$ holds, this hypothesis is unaffected by replacing $L$ with $L(\zeta_{p})$.

For clarity, we now list the assumptions that we shall use for the rest of this proof 
(including the lemmas and propositions proved along the way).
We can make these assumptions either for the reasons just explained or because they are direct consequences 
of our hypotheses. Note that $S_{\ram}(L/K) \cup S_{p} = S_{\ram}(L_{\infty}/K)$.

\begin{assumptions*}
We henceforth assume that $S,T$ are finite sets of places of $K$ and that
(i) $\zeta_{p} \in L$, (ii) $S \cap T = \emptyset \neq T$, (iii)  $S_{p} \cup S_{\ram}(L/K) \cup S_{\infty} \subseteq S$,
and (iv) $E^{T}_{L,S}(p)$ is torsionfree.
\end{assumptions*}

\subsection{Complexes at the finite level}

Now taking $p$-minus parts of sequence \eqref{eqn:ray_class_sequence} for $S = S_{\infty}$ 
yields an exact sequence of $\Z_{p}[G]_{-}$-modules
\begin{equation} \label{eqn:minus-rayclass-sequence}
0 \longrightarrow \mu_{p}(L) \longrightarrow (\mathcal{O}_{L} / \mathfrak{M}_{L}^{T})^{\times} (p)^{-} \longrightarrow A_{L}^{T}
\longrightarrow A_{L} \longrightarrow 0.
\end{equation}
The middle arrow $(\mathcal{O}_{L} / \mathfrak{M}_{L}^{T})\mal (p)^{-} \rightarrow A_{L}^{T}$ defines a complex
$C^{T \bullet}(L/K)$ in $\mathcal{D}(\Z_{p}[G]_{-})$, where we place the first module in degree $0$.
For complexes $C^{\bullet}$ in $\mathcal{D}(\Z_{p}[G])$ 
we put $C^{\bullet -} := \Z_{p}[G]_{-} \otimes^{\mathbb{L}}_{\Z_{p}[G]} C^{\bullet} \in \mathcal{D}(\Z_{p}[G]_{-})$.
Note that taking $p$-minus parts is an exact functor as $p$ is odd.

\begin{prop}\label{prop:coh-interpretation-L/K}
There are isomorphisms
\[
C^{T \bullet}(L/K) \simeq R\Gamma_{\et}(U_{L}, \mathbb{G}_{m,U_{L}})(p)^{-}
\simeq R\Gamma_{\fl}(U_{L}, \mu_{p^{\infty}})^{-}
\]
in $\mathcal{D}(\Z_{p}[G]_{-})$. In particular, the isomorphism class of $C^{T \bullet}(L/K)$ does not depend on $T$.
\end{prop}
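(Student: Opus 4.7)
The plan is to prove the two claimed quasi-isomorphisms separately, using exact triangles coming from \eqref{eqn:ses-etale-sheaves} and \eqref{eqn:flat_triangle} together with systematic cohomology computations. Throughout, I use that for odd $p$ the idempotent $(1-j)/2$ lies in $\Z_{p}[G]$, so $(p)^{-}$ is an exact (indeed triangulated) endofunctor on the appropriate derived category.

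First I would compute the cohomology of the three complexes relevant to \eqref{eqn:ses-etale-sheaves} after applying $(p)^{-}$. Using Proposition \ref{prop:cohomology_of_Gm} with $S = S_{\infty}$, together with the fact that $L$ is totally imaginary (so every archimedean place is complex, which makes the archimedean local Brauer contribution vanish and forces $H^{2}_{\et}(U_{L}, \mathbb{G}_{m,U_{L}}) = 0$ and $H^{3}_{\et}(U_{L}, \mathbb{G}_{m,U_{L}}) \cong \Q/\Z$) and the fact that $j$ acts trivially on this $\Q/\Z$, the complex $R\Gamma_{\et}(U_{L}, \mathbb{G}_{m,U_{L}})(p)^{-}$ has cohomology $\mu_{p}(L)$ in degree $0$, $A_{L}$ in degree $1$, and zero in all other degrees. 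Next, Proposition \ref{prop:cohomology_of_GmT} combined with assumption (iv) — which, since $E_{L,S_{\infty}}^{T} \subseteq E_{L,S}^{T}$, forces $(E_{L,S_{\infty}}^{T})(p)^{-}$ to be a torsionfree $\Z_{p}$-submodule of the finite $p$-group $(\mathcal{O}_{L}^{\times})(p)^{-} = \mu_{p}(L)$ and hence zero — shows that $R\Gamma_{\et}(U_{L}, \mathbb{G}_{m,U_{L}}^{T})(p)^{-}$ is quasi-isomorphic to $A_{L}^{T}$ placed in degree $1$. Finally, by Lemma \ref{lem:qis-i-zt}, $R\Gamma_{\et}(Z_{T}, \mathbb{G}_{m,Z_{T}})(p)^{-}$ is quasi-isomorphic to $(\mathcal{O}_{L}/\mathfrak{M}_{L}^{T})^{\times}(p)^{-}$ placed in degree $0$.

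Next I apply $(p)^{-}$ to the exact triangle in $\mathcal{D}(\Z_{p}[G]_{-})$ induced by \eqref{eqn:ses-etale-sheaves}. Since its outer two terms are quasi-isomorphic to modules concentrated in degrees $1$ and $0$ respectively, an appropriate rotation exhibits $R\Gamma_{\et}(U_{L}, \mathbb{G}_{m,U_{L}})(p)^{-}$ as the mapping cone of an honest morphism of modules $(\mathcal{O}_{L}/\mathfrak{M}_{L}^{T})^{\times}(p)^{-} \to A_{L}^{T}$ sitting in degrees $0$ and $1$. By the remark following the proof of Proposition \ref{prop:cohomology_of_GmT}, the boundary map of the associated long exact cohomology sequence recovers the $\nu$-map of \eqref{eqn:ray_class_sequence}, so after taking $(p)^{-}$ this connecting morphism is precisely the differential appearing in the minus ray class sequence \eqref{eqn:minus-rayclass-sequence}, and the mapping cone is literally $C^{T\bullet}(L/K)$. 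For the second isomorphism I apply $(p)^{-}$ and \eqref{eqn:flat-etale-Gm} to the flat Kummer triangle \eqref{eqn:flat_triangle}: since the cohomology of $R\Gamma_{\et}(U_{L}, \mathbb{G}_{m,U_{L}})(p)^{-}$ is finite by the first step, the term $\Q_{p} \otimes^{\mathbb{L}}_{\Z_{p}} R\Gamma_{\et}(U_{L}, \mathbb{G}_{m,U_{L}})(p)^{-}$ is acyclic, so the remaining map $R\Gamma_{\fl}(U_{L}, \mu_{p^{\infty}})^{-} \to R\Gamma_{\et}(U_{L}, \mathbb{G}_{m,U_{L}})(p)^{-}$ is a quasi-isomorphism, and the independence of $T$ is then immediate from the flat cohomology side. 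The main obstacle is confirming that the connecting morphism in the mapping cone is exactly $\nu$; this reduces to a diagram chase using naturality of the long exact sequence and the identification recorded after Proposition \ref{prop:cohomology_of_GmT}.
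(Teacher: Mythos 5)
Your proposal follows essentially the same route as the paper: compute the minus $p$-parts of the three vertices of the triangle coming from \eqref{eqn:ses-etale-sheaves} using Propositions \ref{prop:cohomology_of_Gm} and \ref{prop:cohomology_of_GmT} (with the key observations that the archimedean Brauer groups vanish, $(\Q/\Z)^{-}=0$, $(\mathcal{O}_L^\times)(p)^-=\mu_p(L)$, and that the torsionfreeness assumption kills $H^0$ of the $T$-modified complex), identify the outer terms as modules concentrated in degrees $1$ and $0$ respectively, and present $R\Gamma_{\et}(U_L,\mathbb{G}_{m,U_L})(p)^-$ as the cone of the connecting morphism; then derive the second isomorphism from the Kummer triangle \eqref{eqn:flat_triangle} and \eqref{eqn:flat-etale-Gm} by noting the rational piece is acyclic. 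The one place you are in fact slightly more careful than the paper is in explicitly verifying, via the Remark following Proposition \ref{prop:cohomology_of_GmT}, that the connecting morphism of the triangle coincides with the map $\nu$ of \eqref{eqn:ray_class_sequence} after taking minus $p$-parts — the paper's displayed diagram glosses over this point, although it is certainly implicit. This is correct and a worthwhile sanity check, since without it one would only know that the two complexes have the same cohomology groups, not that they are isomorphic in $\mathcal{D}(\Z_p[G]_-)$.
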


\begin{proof}
Proposition \ref{prop:cohomology_of_Gm} describes the cohomology of $R\Gamma_{\et}(U_{L}, \mathbb{G}_{m,U_{L}})(p)^{-}$ as follows.
First, we have isomorphisms
\[
H^{0}_{\et}(U_{L}, \mathbb{G}_{m,U_{L}})(p)^{-} \simeq \mu_{p}(L), \quad
H^{1}_{\et}(U_{L}, \mathbb{G}_{m,U_{L}})(p)^{-} \simeq A_{L}.
\]
As $L$ is totally complex,
we have $H^{i}_{\et}(U_{L}, \mathbb{G}_{m,U_{L}}) = 0$ for every $i \geq 4$. Finally, we have an exact sequence
\[
0 \longrightarrow H^{2}_{\et}(U_{L}, \mathbb{G}_{m,U_{L}}) \longrightarrow \bigoplus_{w \in S_{\infty}(L)} \Br(L_{w}) \longrightarrow \Q / \Z
\longrightarrow H^{3}_{\et}(U_{L}, \mathbb{G}_{m,U_{L}}) \longrightarrow 0.
\]
However, the Brauer groups $\Br(L_{w}) = \Br(\C)$ vanish for all archimedean places $w$ of $L$ and $(\Q / \Z)^{-} = 0$.
We therefore have $H^{i}_{\et}(U_{L}, \mathbb{G}_{m,U_{L}})(p)^{-} = 0$ for every $i \geq 2$. 
Using Proposition \ref{prop:cohomology_of_GmT} and the assumption that $E_{L,S}^{T}(p)$ is torsionfree,
we likewise find that
\[
H^{i}_{\et}(U_{L}, \mathbb{G}_{m,U_{L}}^{T})(p)^{-} \simeq \left\{ \begin{array}{lll}
A_{L}^{T} & \mbox{ if } & i = 1\\
0 & \mbox{ if } & i \not=1.
\end{array} \right.
\]
Now sequence \eqref{eqn:ses-etale-sheaves} and Lemma \ref{lem:qis-i-zt} together give an exact triangle
\[ \xymatrix{
{R\Gamma_{\et}(U_{L}, \mathbb{G}_{m,U_{L}}^{T})(p)^{-}} \ar[r] \ar[d]^{\simeq} &
R\Gamma_{\et}(U_{L}, \mathbb{G}_{m,U_{L}})(p)^{-} \ar[r] & R\Gamma_{\et}(U_{L}, \iota_{\ast} \mathbb{G}_{m,Z_{T}})(p)^{-} \ar[d]^{\simeq}\\
{A_{L}^{T} [-1]} & & (\mathcal{O}_{L} / \mathfrak{M}_{L}^{T})^{\times}(p)^{-}
}\]
and thus we obtain the first required isomorphism in $\mathcal{D}(\Z_{p}[G]_{-})$.

Equation \eqref{eqn:flat-etale-Gm} with $X=U_{L}$ shows that
$R\Gamma_{\et}(U_{L}, \mathbb{G}_{m,U_{L}}) \simeq  R\Gamma_{\fl}(U_{L}, \mathbb{G}_{m})$ in $\mathcal{D}(\Z[G])$.
Moreover,  the exact triangle \eqref{eqn:flat_triangle} and the above considerations show that the natural map
\[
R\Gamma_{\fl}(U_{L}, \mu_{p^{\infty}})(p)^{-} \longrightarrow R\Gamma_{\fl}(U_{L}, \mathbb{G}_{m})(p)^{-}
\]
is in fact an isomorphism in $\mathcal{D}(\Z_{p}[G]_{-})$. 
Therefore we obtain the second required isomorphism in $\mathcal{D}(\Z_{p}[G]_{-})$.
\end{proof}

\section{Proof of Theorem \ref{thm:EIMC-implies-BS}: working at the infinite level}\label{sec:proof-infinite-level}

\subsection{Setup and notation}

We now work at the `infinite level' and use the techniques of Iwasawa theory.
Let $\mathcal{G} := \Gal(L_{\infty}/K)$, which we may write as $\mathcal{G} = H \rtimes \Gamma$ where $\Gamma \simeq \Z_{p}$ 
and $H := \Gal(L_{\infty} / K_{\infty})$ naturally identifies with a normal subgroup of $G$.
Let $\Gamma_{0}$ be an open subgroup of $\Gamma$ that is central in $\mathcal{G}$ and recall from 
 \eqref{eq:Lambda-R-decomp} that $\Lambda(\mathcal{G}):=\Z_{p} \llbracket \mathcal{G} \rrbracket $ is a free
$R := \Z_{p} \llbracket \Gamma_{0} \rrbracket $-order in $\mathcal{Q}(\mathcal{G})$.
Let $j \in \mathcal{G}$ denote complex conjugation (this an abuse of notation because its image in the quotient group
$G:=\Gal(L/K)$ is also denoted by $j$) and let $\mathcal{G}^{+} := \mathcal{G} / \langle j \rangle = \Gal(L_{\infty}^{+}/K)$.
Then $j \in H$ and so again $\Lambda(\mathcal{G}^{+})$ is a free $R$-order in $\mathcal{Q}(\mathcal{G}^{+})$. 
Moreover, $\Lambda(\mathcal{G})_{-} := \Lambda(\mathcal{G}) / (1+j)$ is also a free $R$-module of finite rank.
For any $\Lambda(\mathcal{G})$-module $M$ we write $M^{+}$ and $M^{-}$ for the submodules of $M$ upon 
which $j$ acts as $1$ and $-1$, respectively, and consider these as modules over $\Lambda(\mathcal{G}^{+})$ and
$\Lambda(\mathcal{G})_{-}$, respectively. 
We note that $M$ is $R$-torsion if and only if both $M^{+}$ and $M^{-}$ are $R$-torsion.
Furthermore, any $R$-module that is finitely generated as a $\Z_{p}$-module is necessarily $R$-torsion.

Let $\chi_{\mathrm{cyc}}:\mathcal{G} \rightarrow \Z_{p}^{\times}$ denote the $p$-adic cyclotomic character (recall the assumption that $\zeta_{p} \in L$).
Let $\mu_{p^{n}}=\mu_{p^{n}}(L_{\infty})$ denote the group of $p^{n}$th roots of unity in $L_{\infty}^{\times}$
and let $\mu_{p^{\infty}}$ be the nested union (or direct limit) of these groups.
Let $\Z_{p}(1):= \varprojlim_{n} \mu_{p^{n}}$ be endowed with the action of $\mathcal{G}$ given by 
$\chi_{\mathrm{cyc}}$.
For any $r \geq 0$ define $\Z_{p}(r) := \Z_{p}(1)^{\otimes r}$ and $\Z_{p}(-r) := \Hom_{\Z_{p}}(\Z_{p}(r),\Z_{p})$
endowed with the naturally associated actions. 
For any $\Lambda(\mathcal{G})$-module $M$, we define the $r$th Tate twist to be $M(r):= \Z_{p}(r) \otimes_{\Z_{p}} M$
with the natural $\mathcal{G}$-action; hence $M(r)$ is simply $M$ with the modified $\mathcal{G}$-action 
$g \cdot m = \chi_{\mathrm{cyc}}(g)^{r} g(m)$ for $g \in \mathcal{G}$ and $m \in M$.
In particular, we have $\Q_{p} / \Z_{p} (1) \simeq \mu_{p^{\infty}}$ and $\Lambda(\mathcal{G}^{+})(-1) \simeq \Lambda(\mathcal{G})_{-}$.
Recall that for a $\Z$-module $M$ we previously defined $M(p):=\Z_{p} \otimes_{\Z} M$; 
we shall use both notations $M(p)$ and $M(r)$ in the sequel, believing the meaning to be clear from context. 
We note that the property of being $R$-torsion is preserved under taking Tate twists. 

For every place $v$ of $K$ we denote the decomposition subgroup of $\mathcal{G}$ at a chosen prime $w_{\infty}$
above $v$ by $\mathcal{G}_{w_{\infty}}$ (everything will only depend on $v$ and not on $w_{\infty}$ in the following).
We note that the index $[\mathcal{G}:\mathcal{G}_{w_{\infty}}]$ is finite when $v$ is a finite place of $K$.

\subsection{Complexes at the infinite level}

Let $L_{n}$ be the $n$th layer in the cyclotomic $\Z_{p}$-extension of $L$.
Then $\varinjlim_{n} C^{T \bullet}(L_{n}/K)$ defines a complex $C^{T \bullet}(L_{\infty}/K)$ in $\mathcal{D}(\Lambda(\mathcal{G})_{-})$.
We define $A_{L_{\infty}} := \varinjlim_{n} A_{L_{n}}$.

\begin{lemma}\label{lem:cohomology_of_C^T}
We have isomorphisms
\[
H^{i}(C^{T \bullet}(L_{\infty}/K)) \simeq \left\{ \begin{array}{lll}
\mu_{p^{\infty}} \simeq \Q_{p} / \Z_{p} (1) & \mbox{ if } & i=0 \\
A_{L_{\infty}} & \mbox{ if } & i=1 \\
0 & \mbox{ if } & i\not= 0,1.
\end{array} \right.
\]
\end{lemma}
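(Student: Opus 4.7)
The plan is to reduce immediately to the finite-level computations carried out in the proof of Proposition \ref{prop:coh-interpretation-L/K} and then pass to the direct limit. By construction, $C^{T\bullet}(L_{\infty}/K) = \varinjlim_n C^{T\bullet}(L_n/K)$, where $L_n$ denotes the $n$-th layer of the cyclotomic $\Z_p$-extension of $L$. The transition maps are those induced by the inclusion $\mathcal{O}_{L_n} \hookrightarrow \mathcal{O}_{L_{n+1}}$ on the degree-$0$ term and by the extension-of-ideals map on the degree-$1$ term; equivalently, under the identifications of Proposition \ref{prop:coh-interpretation-L/K}, they are the pullback maps on \'etale (or flat) cohomology along $U_{L_{n+1}} \to U_{L_n}$.

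First I would check that our standing assumptions survive passage from $L$ to each $L_n$, so that Proposition \ref{prop:coh-interpretation-L/K} applies at every finite level. The only point requiring a brief argument is that $E_{L_n,S}^{T}(p)$ remains torsionfree: any $p$-power root of unity in $L_n$ congruent to $1$ modulo $\mathfrak{M}_{L_n}^T$ would, by taking norms down to $L$, produce a $p$-power root of unity in $L$ congruent to $1$ modulo $\mathfrak{M}_{L}^T$ (raised to a $p$-power exponent), contradicting the hypothesis that $E_{L,S}^{T}(p)$ is torsionfree. Hence Proposition \ref{prop:coh-interpretation-L/K} yields
\[
H^0(C^{T\bullet}(L_n/K)) \simeq \mu_p(L_n), \qquad H^1(C^{T\bullet}(L_n/K)) \simeq A_{L_n},
\]
with vanishing in all other degrees.

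The key general fact I would then invoke is that filtered direct limits are exact and therefore commute with taking cohomology of complexes, so
\[
H^i(C^{T\bullet}(L_{\infty}/K)) \simeq \varinjlim_n H^i(C^{T\bullet}(L_n/K))
\]
for every $i$. This reduces the problem to identifying the three limits. For $i \neq 0,1$ the limit is zero; for $i=1$ the limit equals $A_{L_{\infty}}$ by its very definition; and for $i=0$ the assumption $\zeta_p \in L$ forces $L_{\infty} = L(\mu_{p^{\infty}})$, whence $\varinjlim_n \mu_p(L_n) = \mu_p(L_{\infty}) = \mu_{p^{\infty}}$, which is canonically $\Q_p/\Z_p(1)$ as a $\mathcal{G}$-module via the cyclotomic character.

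The main (rather modest) obstacle is the bookkeeping check that the transition maps used to define $C^{T\bullet}(L_{\infty}/K)$ really do agree with those induced functorially on the \'etale/flat complexes at finite levels, so that the identification of limit cohomology is legitimate. This is built into the naturality of the quasi-isomorphisms in Proposition \ref{prop:coh-interpretation-L/K}, which were constructed from the morphism of triangles in \eqref{eqn:ses-etale-sheaves} applied functorially along the tower $L_n \hookrightarrow L_{n+1}$.
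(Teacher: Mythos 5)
Your argument is correct, and the overall structure (reduce to finite levels, then pass to the direct limit) is the same as the paper's. However, you take an unnecessarily roundabout route: you invoke Proposition \ref{prop:coh-interpretation-L/K} to identify $H^{i}(C^{T\bullet}(L_{n}/K))$ at each finite level, and then appeal to the commutation of filtered colimits with cohomology. The paper's proof bypasses the \'{e}tale/flat interpretation entirely: since $C^{T\bullet}(L_{n}/K)$ is \emph{defined} as the middle arrow of the four-term exact sequence \eqref{eqn:minus-rayclass-sequence}, its $H^{0}$ and $H^{1}$ are simply the kernel $\mu_{p}(L_{n})$ and cokernel $A_{L_{n}}$ appearing in that sequence, and taking the direct limit of the whole four-term sequence over $n$ (exactness is preserved by filtered colimits) produces \eqref{eqn:limit-sequence}, from which the cohomology of $C^{T\bullet}(L_{\infty}/K)$ is read off at once. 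Your detour works but re-proves in each layer something that follows immediately from the definition; it also imports the dependence on the full strength of Proposition \ref{prop:coh-interpretation-L/K} (including the flat-cohomology comparison) which plays no role in this lemma.

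Two small points on the details you raise. First, your worry about compatibility of transition maps evaporates once you use the definition directly: the transition maps defining $C^{T\bullet}(L_{\infty}/K) = \varinjlim_{n} C^{T\bullet}(L_{n}/K)$ are precisely the ones in the four-term sequences, so no bookkeeping with \'{e}tale pullbacks is needed. Second, your norm argument for persistence of the torsion-freeness of $E_{L_{n},S}^{T}(p)$ is a little awkwardly phrased (the norm of a $p$-power root of unity in $L_{n}$ that does not lie in $L$ need not be a $p$-power of that root of unity); a cleaner argument is to pick any $v \in T$ and $w \mid v$ in $L_{n}$: since $v \nmid p$ (because $S_{p} \subseteq S$ and $S \cap T = \emptyset$), the reduction map $\mu_{p^{\infty}}(L_{n}) \to (\mathcal{O}_{L_{n}}/\mathfrak{P}_{w})^{\times}$ is injective, so no nontrivial $p$-power root of unity can be $\equiv 1 \pmod{\mathfrak{M}_{L_{n}}^{T}}$, and hence $E_{L_{n},S}^{T}(p)$ is torsionfree. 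This persistence is implicitly used in the paper's proof as well, so noting it is a reasonable thing to do.
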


\begin{proof}
Recall that $C^{T \bullet}(L_{n}/K)$ is defined by the middle arrow of the sequence \eqref{eqn:minus-rayclass-sequence} for the layer $L_{n}$.
Taking the direct limit over all $n$ gives an exact sequence of $\Lambda(\mathcal{G})_{-}$-modules
\begin{equation}\label{eqn:limit-sequence}
0 \longrightarrow \mu_{p^{\infty}} \longrightarrow \left( \bigoplus_{v \in T} \ind_{\mathcal{G}_{w_{\infty}}}^{\mathcal{G}} \mu_{p^{\infty}} \right)^{-}
 \longrightarrow A_{L_{\infty}}^{T} \longrightarrow A_{L_{\infty}} \longrightarrow 0, 
\end{equation}
where $A_{L_{\infty}}^{T} := \varinjlim_{n} A_{L_{n}}^{T}$.
Thus $C^{T \bullet}(L_{\infty}/K)$ is the complex in degrees $0$ and $1$ given by the middle arrow of \eqref{eqn:limit-sequence}, 
giving the desired result. 
\end{proof}

For every complex $C^{\bullet}$ in $\mathcal{D}(\Lambda(\mathcal{G}))$
we put $C^{\bullet -} := \Lambda(\mathcal{G})_{-} \otimes^{\mathbb{L}}_{\Lambda(\mathcal{G})} C^{\bullet} \in \mathcal{D}(\Lambda(\mathcal{G})_{-})$.
For a finite set $S$ of places of $K$ we let $U_{\infty, S} := \Spec(\mathcal{O}_{L_{\infty}, S \cup S_{\infty}})$
and put $U_{L_{\infty}} := U_{\infty, S_{\infty}}$.
The following proposition can be viewed as a `derived version' of results that are well-known at the level of cohomology.
It seems possible that this result is known to experts, but the authors were unable to locate a proof in the literature.

\begin{prop} \label{prop:coh-interpretation-Linfty/K}
There are isomorphisms
\[
C^{T \bullet}(L_{\infty}/K)  \simeq R\Gamma_{\fl}(U_{L_{\infty}}, \mu_{p^{\infty}})^{-}
\simeq R\Gamma_{\et}(U_{\infty, S_{p}}, \mu_{p^{\infty}})^{-} 
\]
in $\mathcal{D}(\Lambda(\mathcal{G})_{-})$. In particular, the isomorphism class of
$C^{T \bullet}(L_{\infty}/K)$ does not depend on $T$.
\end{prop}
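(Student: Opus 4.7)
The plan is to establish the two isomorphisms in turn. For the first, I would pass to the direct limit in the finite-layer isomorphism $C^{T\bullet}(L_n/K) \simeq R\Gamma_{\fl}(U_{L_n},\mu_{p^{\infty}})^{-}$ supplied by Proposition \ref{prop:coh-interpretation-L/K}; this applies at every layer $L_n$, since our standing assumptions (in particular $\zeta_p \in L \subseteq L_n$, $S\cap T = \emptyset \neq T$, and the $p$-torsionfreeness of $E_{L_n,S}^T$) propagate up the cyclotomic tower. The scheme $U_{L_{\infty}}$ is the inverse limit of the affine schemes $U_{L_n}$ along finite flat transition morphisms, and $\mu_{p^{\infty}}$ on each layer is the pullback of the corresponding sheaf from $U_K$, so the standard compatibility between flat cohomology and such inverse limits of schemes yields
\[
R\Gamma_{\fl}(U_{L_{\infty}},\mu_{p^{\infty}}) \simeq \varinjlim_{n} R\Gamma_{\fl}(U_{L_n},\mu_{p^{\infty}}).
\]
Since $C^{T\bullet}(L_{\infty}/K) = \varinjlim_n C^{T\bullet}(L_n/K)$ by definition and taking minus parts is exact and commutes with direct limits, the first isomorphism follows.

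For the second isomorphism, consider the open immersion $j: U_{\infty,S_p} \hookrightarrow U_{L_{\infty}}$, whose complement $Z$ consists of the (pro-finite set of) primes above $p$. Since $p$ is invertible on $U_{\infty,S_p}$, the sheaf $\mu_{p^{\infty}}$ is ind-étale there, so flat and étale cohomology agree: $R\Gamma_{\fl}(U_{\infty,S_p},\mu_{p^{\infty}}) \simeq R\Gamma_{\et}(U_{\infty,S_p},\mu_{p^{\infty}})$. The localisation triangle
\[
R\Gamma_{Z}(U_{L_{\infty}},\mu_{p^{\infty}}) \longrightarrow R\Gamma_{\fl}(U_{L_{\infty}},\mu_{p^{\infty}}) \longrightarrow R\Gamma_{\et}(U_{\infty,S_p},\mu_{p^{\infty}})
\]
then reduces the second isomorphism to the acyclicity of $R\Gamma_{Z}(U_{L_{\infty}},\mu_{p^{\infty}})^{-}$.

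The main obstacle is establishing this vanishing. By excision, the local term decomposes as a direct sum, indexed by places $v$ of $K$ above $p$, of modules induced from the decomposition subgroup $\mathcal{G}_{w_{\infty}}$ of a chosen prime $w_{\infty} \mid v$ of $L_{\infty}$ up to $\mathcal{G}$. The summand at $v$ is computed from the Kummer sequence on the henselisation of $\mathcal{O}_{L_{\infty}}$ at $w_{\infty}$, giving a complex built out of local $p$-power units and local flat cohomology of $\mu_{p^{\infty}}$. The task is then a purely local one at each $w_{\infty}$: one must verify, using that $(L_{\infty})_{w_{\infty}}$ contains the local cyclotomic $\Z_{p}$-extension and is therefore a deeply ramified $p$-adic field in the sense of Coates--Greenberg, that the $j$-minus part of this local complex is acyclic in the Iwasawa limit. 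An alternative route, which avoids the direct local computation, is to work at the level of cohomology: Lemma \ref{lem:cohomology_of_C^T} together with the first isomorphism already pins down the cohomology of $R\Gamma_{\fl}(U_{L_{\infty}},\mu_{p^{\infty}})^{-}$, while the Kummer sequence on $U_{\infty,S_p}$ together with classical Iwasawa theory for the $S_p$-units and $S_p$-class group of $L_{\infty}$ computes the cohomology of $R\Gamma_{\et}(U_{\infty,S_p},\mu_{p^{\infty}})^{-}$; matching them degree by degree promotes the cohomological comparison to an isomorphism in $\mathcal{D}(\Lambda(\mathcal{G})_{-})$. Once either route is carried through, $T$-independence of the isomorphism class is automatic, as neither of the last two terms involves $T$.
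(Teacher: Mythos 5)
Your treatment of the first isomorphism matches the paper's proof: one passes to the direct limit in the finite-level identification $C^{T\bullet}(L_n/K) \simeq R\Gamma_{\fl}(U_{L_n},\mu_{p^\infty})^{-}$ of Proposition \ref{prop:coh-interpretation-L/K}, invoking compatibility of flat cohomology with inverse limits of schemes (the paper cites \cite[Chapter III, Lemma 1.16]{MR559531} and its flat-topology analogue). Your remark that the standing assumptions propagate up the cyclotomic tower is correct and is needed to apply the finite-level result at each layer. For the second isomorphism, your primary route via the localisation triangle is in the right spirit, but you only sketch the decisive step, namely the vanishing in minus parts of the local term supported over $p$; you gesture at deep ramification in the sense of Coates--Greenberg without carrying out the computation. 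The paper instead cites \cite[Part II, Lemma 3]{MR689645}, and is careful to point out that although that result is stated purely in terms of cohomology groups, its proof shows that the cone of the natural map $R\Gamma_{\fl}(U_{L_\infty},\mu_{p^\infty}) \to R\Gamma_{\et}(U_{\infty,S_p},\mu_{p^\infty})$ is acyclic.

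The ``alternative route'' you propose as a fallback contains a genuine gap. Having abstractly isomorphic cohomology in each degree does not determine the isomorphism class of a complex in the derived category. Both sides here are concentrated in degrees $0$ and $1$, so as objects of $\mathcal{D}(\Lambda(\mathcal{G})_{-})$ they are classified by the pair $(H^0,H^1)$ together with an extension class in $\Ext^2_{\Lambda(\mathcal{G})_{-}}(H^1,H^0)$, and this last datum is not obviously trivial nor obviously the same on both sides. To upgrade a cohomological match to a derived isomorphism you must produce a map of complexes (e.g.\ the natural comparison map coming from the open immersion $U_{\infty,S_p} \hookrightarrow U_{L_\infty}$) and check that it induces isomorphisms on $H^0$ and $H^1$; merely recording that both sides have isomorphic cohomology groups does not suffice. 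This is precisely the distinction the paper flags when it explains that the cited lemma's proof yields acyclicity of the cone, not just agreement of cohomology.
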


\begin{proof}
The assumption that $\zeta_{p} \in L$ is crucial in this proof.
Even though it is not strictly necessary, we first check that the two complexes $C^{T \bullet}(L_{\infty}/K)$ and
$R\Gamma_{\et}(U_{\infty, S_{p}}, \mu_{p^{\infty}})^{-} $ compute the same cohomology.
Let $M_{S_{p}}$ be the maximal profinite extension of $L_{\infty}$ that is unramified outside $S_{p}$ and let $M_{S_{p}}^{\ab}(p)$ be the maximal abelian
pro-$p$-extension of $L_{\infty}$ inside $M_{S_{p}}$.
We put $H_{S_{p}} := \Gal(M_{S_{p}} / L_{\infty})$ and $X_{S_{p}} := \Gal(M_{S_{p}}^{\ab}(p) / L_{\infty})$.
There is a canonical isomorphism with Galois cohomology
\begin{equation} \label{eqn:etale-to-Galois}
R\Gamma_{\et}(U_{L_{\infty}, S_{p}}, \mu_{p^{\infty}}) \simeq R\Gamma(H_{S_{p}}, \mu_{p^{\infty}}).
\end{equation}
The strict cohomological $p$-dimension of $H_{S_{p}}$ equals $2$ by \cite[Corollary 10.3.26]{MR2392026} and thus
$H^{i}(H_{S_{p}}, \mu_{p^{\infty}}) = 0$ for all $i \not= 0,1,2$. As the weak Leopoldt conjecture holds for the cyclotomic
$\Z_{p}$-extension, we also have $H^{2}(H_{S_{p}}, \mu_{p^{\infty}}) = H^{2}(H_{S_{p}}, \Q_{p} / \Z_{p})(1) = 0$
by \cite[Theorem 11.3.2]{MR2392026}. We clearly have $H^{0}(H_{S_{p}}, \mu_{p^{\infty}}) = \mu_{p^{\infty}}$. Finally
\[
H^{1}(H_{S_{p}}, \mu_{p^{\infty}})^{-} =  \Hom(H_{S_{p}}, \mu_{p^{\infty}})^{-} =  \Hom(X_{S_{p}}, \mu_{p^{\infty}})^{-}
= \Hom(X_{S_{p}}^{+}, \mu_{p^{\infty}}) \simeq  A_{L_{\infty}},
\]
where the last isomorphism is Kummer duality \cite[Theorem 11.4.3]{MR2392026}. 
If we compare this with Lemma \ref{lem:cohomology_of_C^T},
we see that $R\Gamma_{\et}(U_{\infty, S_{p}}, \mu_{p^{\infty}})^{-}$ and $C^{T \bullet}(L_{\infty}/K)$
compute the same cohomology.

We now establish the derived version of this result and also consider flat cohomology. 
By Proposition \ref{prop:coh-interpretation-L/K} we have isomorphisms 
\[
	C^{T \bullet}(L_{n}/K) \simeq R\Gamma_{\fl}(U_{L_{n}}, \mu_{p^{\infty}})^{-}
\]
for each layer $L_{n}$ in the cyclotomic $\Z_{p}$-extension. Taking direct limits over all $n$ yields the first
required isomorphism by \cite[Chapter III, Lemma 1.16]{MR559531} which holds for the flat topology as well
(see \cite[Chapter III, Remark 1.17 (d)]{MR559531} and \cite[p.~172]{MR244271}). Finally, the natural map
\[
	R\Gamma_{\fl}(U_{L_{\infty}}, \mu_{p^{\infty}}) \longrightarrow
        R\Gamma_{\et}(U_{\infty, S_{p}}, \mu_{p^{\infty}})
\]
is an isomorphism by \cite[Part II, Lemma 3]{MR689645} (note that this result is formulated only in terms
of cohomology, but the proof actually shows that the cone of this map is acyclic).
 \end{proof}

For a finite set $S$ of places of $K$ containing $S_{p} \cup S_{\infty}$ recall the definition of the complex
\[
C_{S}^{\bullet}(L_{\infty}^{+}/K) := R\Gamma_{\et}(\Spec(\mathcal{O}_{L_{\infty}^{+},S}), \Q_{p} / \Z_{p})^{\vee}
\in \mathcal{D}(\Lambda(\Gal(L_{\infty}^{+}/K)))
\]
which occurs in the EIMC. 
For an integer $m$ we let $C_{S}^{\bullet}(L_{\infty}^{+}/K)(m) := \Z_{p}(m) \otimes^{\mathbb{L}}_{\Z} C_{S}^{\bullet}(L_{\infty}^{+}/K)$
be the $m$-fold Tate twist.

\begin{corollary}\label{cor:isom-cT-complex}
We have $C_{S_{p}}^{\bullet}(L_{\infty}^{+}/K)(-1) \simeq C^{T \bullet}(L_{\infty}/K)^{\vee}$ in $\mathcal{D}(\Lambda(\mathcal{G})_{-})$. 
\end{corollary}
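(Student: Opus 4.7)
The plan is to combine Proposition~\ref{prop:coh-interpretation-Linfty/K} with a straightforward étale Galois descent argument. By Proposition~\ref{prop:coh-interpretation-Linfty/K} we have
$$
C^{T \bullet}(L_\infty/K) \simeq R\Gamma_\et(U_{\infty, S_p}, \mu_{p^\infty})^-
$$
in $\mathcal{D}(\Lambda(\mathcal{G})_-)$. Writing $\mu_{p^\infty} = \Q_p/\Z_p(1)$ and observing that $j$ acts as $-1$ on $\Z_p(1)$, one checks the formal identity $(A^\bullet \otimes \Z_p(1))^- \simeq A^{\bullet,+} \otimes \Z_p(1)$ for any complex $A^\bullet$ of $\Lambda(\mathcal{G})$-modules. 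Applied to $A^\bullet = R\Gamma_\et(U_{\infty, S_p}, \Q_p/\Z_p)$ this gives $C^{T \bullet}(L_\infty/K) \simeq R\Gamma_\et(U_{\infty, S_p}, \Q_p/\Z_p)^+ \otimes \Z_p(1)$. Pontryagin dualizing and using $(A(1))^\vee = A^\vee(-1)$, the corollary reduces to proving the canonical isomorphism
$$
R\Gamma_\et(U_{\infty, S_p}, \Q_p/\Z_p)^+ \simeq R\Gamma_\et(\Spec(\mathcal{O}_{L_\infty^+, S_p}), \Q_p/\Z_p)
$$
in $\mathcal{D}(\Lambda(\mathcal{G}^+))$.

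The key geometric input is that the natural morphism $\pi: U_{\infty, S_p} \to \Spec(\mathcal{O}_{L_\infty^+, S_p})$ is a finite étale Galois cover with group $\langle j\rangle$ of order~$2$. Recall that after the reduction step at the start of the proof of Theorem~\ref{thm:EIMC-implies-BS} we may assume $\zeta_p \in L$; combined with $[L:L^+]=2$ and $\zeta_p \notin L^+$ (since $L^+$ is totally real and $p$ is odd), this forces $L = L^+(\zeta_p)$ and in particular $L^+ \supseteq \Q(\zeta_p)^+$. The relative different of $L^+(\zeta_p)/L^+$ divides the cyclotomic different $(\zeta_p - \zeta_p^{-1}) = \zeta_p^{-1}(\zeta_p-1)(\zeta_p+1)$; since $\zeta_p+1$ has norm $\Phi_p(-1) = 1$ over $\Q$ and hence is a unit, this different is supported entirely at primes above $p$. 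Thus $L/L^+$, and therefore $L_\infty/L_\infty^+$, is unramified outside $S_p$, so $\pi$ is indeed étale.

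With $\pi$ étale and $|\langle j\rangle| = 2$ invertible in $\Z_p$, the Hochschild--Serre spectral sequence for $\pi$ with constant coefficients $\Q_p/\Z_p$ degenerates at $E_2$, and $\langle j\rangle$-invariants coincide with the $+$-part via the idempotent $(1+j)/2 \in \Z_p[\langle j\rangle]$. This produces the required derived-category isomorphism canonically and equivariantly for the residual $\Lambda(\mathcal{G}^+)$-action, so that assembling the steps yields the desired identification in $\mathcal{D}(\Lambda(\mathcal{G})_-)$. The main obstacle is not geometric---étaleness of $\pi$ follows cleanly from the reduction---but rather the careful bookkeeping required to ensure that the chain of Pontryagin dualities, Tate twists, and plus/minus decompositions all respect the $\Lambda(\mathcal{G})_-$-module structure throughout, so that one obtains a genuine derived-category isomorphism rather than a mere coincidence of cohomology groups; this lets one avoid invoking any heavier Artin--Verdier or Tate--Poitou duality machinery.
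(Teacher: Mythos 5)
Your argument is correct. The paper states this corollary without a separate proof, treating it as immediate from Proposition~\ref{prop:coh-interpretation-Linfty/K} and the formal identities $\Lambda(\mathcal{G}^{+})(-1)\simeq\Lambda(\mathcal{G})_{-}$, $\mu_{p^{\infty}}\simeq\Q_{p}/\Z_{p}(1)$, $(A(1))^{\vee}\simeq A^{\vee}(-1)$, and $(M(1))^{-}\simeq M^{+}(1)$; your write-up supplies exactly the bookkeeping that has been suppressed and then isolates the one genuine geometric input, namely that the quadratic extension $L_{\infty}/L_{\infty}^{+}$ is unramified outside $S_{p}$. Your justification of this (using the reduction $\zeta_{p}\in L$ to write $L=L^{+}(\zeta_{p})$, then bounding the relative different by $\zeta_{p}-\zeta_{p}^{-1}$, with $\zeta_{p}+1$ a unit because $\Phi_{p}(-1)=1$) is correct. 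The same fact is already used implicitly in the proof of Proposition~\ref{prop:coh-interpretation-Linfty/K}: when one passes to Galois cohomology via \eqref{eqn:etale-to-Galois}, the maximal extension $M_{S_{p}}$ of $L_{\infty}$ unramified outside $S_{p}$ coincides with that of $L_{\infty}^{+}$, so $H_{S_{p}}\lhd H_{S_{p}}^{+}:=\Gal(M_{S_{p}}/L_{\infty}^{+})$ has index two and the degenerate Hochschild--Serre sequence for $\langle j\rangle$ (with $2\in\Z_{p}^{\times}$ since $p$ is odd) gives $R\Gamma(H_{S_{p}}^{+},\Q_{p}/\Z_{p})\simeq R\Gamma(H_{S_{p}},\Q_{p}/\Z_{p})^{+}$, which is the Galois-cohomological form of your \'etale descent. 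Either formulation works; your geometric version is slightly cleaner in that it stays at the level of the canonical complexes throughout. Two very minor points worth making explicit if this were to appear in print: (a) $\pi$ is an ind-finite (\'etale) cover, i.e.\ a limit of the finite \'etale covers $\Spec(\mathcal{O}_{L_{n},S_{p}})\to\Spec(\mathcal{O}_{L_{n}^{+},S_{p}})$, so one should invoke compatibility of \'etale cohomology with such limits, as the paper already does elsewhere; and (b) the invariants functor $e_{+}=(1+j)/2$ is exact on $\Z_{p}[\langle j\rangle]$-modules, so no derived/homotopy-fixed-points subtlety arises and the isomorphism is genuinely in $\mathcal{D}(\Lambda(\mathcal{G})_{-})$.
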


\begin{lemma}\label{lem:IT-proj-dim-at-most-one}
Let $T'$ be a non-empty finite set of places of $K$ disjoint from $S_{p} \cup S_{ram} \cup S_{\infty}$
and let 
$
I_{T'}:=\left( \bigoplus_{v\in T'} \ind_{\mathcal{G}_{w_{\infty}}}^{\mathcal{G}} \Z_{p}(-1)\right)^{-}.
$
Then 
\begin{enumerate}
\item $I_{T'}$ is an $\Lambda(\mathcal{G})_{-}$-module of projective dimension at most $1$,
\item for any $\Lambda(\mathcal{G})_{-}$-module $M$ we have $\Ext^{i}_{\Lambda(\mathcal{G})_{-}} (I_{T'}, M)=0$ for $i \geq 2$, and
\item $I_{T'}$ is $R$-torsion.
\end{enumerate}
\end{lemma}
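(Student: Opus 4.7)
My plan is to build a length-one projective resolution of each summand $\ind_{\mathcal{G}_{w_{\infty}}}^{\mathcal{G}} \Z_p(-1)$ over $\Lambda(\mathcal{G})$ and then descend to $\Lambda(\mathcal{G})_{-}$ by means of the idempotent $(1-j)/2$, which is available because $p$ is odd. This idempotent is central in $\Lambda(\mathcal{G})$, multiplication by it is exact, sends projective $\Lambda(\mathcal{G})$-modules to projective $\Lambda(\mathcal{G})_{-}$-modules, and identifies $M^{-}$ with a direct summand of any $\Lambda(\mathcal{G})$-module $M$. Hence it will suffice to prove that $\mathrm{pd}_{\Lambda(\mathcal{G})}\ind_{U_{v}}^{\mathcal{G}} \Z_{p}(-1) \leq 1$ for each $v \in T'$, where $U_{v} := \mathcal{G}_{w_{\infty}}$.

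The first step is structural. Since $v$ is finite with $v \nmid p$ and $v \notin S_{\mathrm{ram}}(L/K)$, the place $v$ is unramified in $L_{\infty}/K$, so $U_{v}$ is procyclic, topologically generated by a Frobenius $\sigma_{v}$. The image of $\sigma_{v}$ in $\Gamma$ corresponds under $\kappa$ to $\langle N(v)\rangle \in 1+p\Z_{p}$; since $N(v) \geq 2$ is a rational integer, $\langle N(v)\rangle \neq 1$, so this image generates an open subgroup of $\Gamma \cong \Z_{p}$. Consequently $U_{v}$ is open in $\mathcal{G}$, $\Lambda(\mathcal{G})$ is free of finite rank over $\Lambda(U_{v})$ on the right, and the structure of abelian procyclic profinite groups forces $U_{v} \cong F \times \Z_{p}$ for a cyclic subgroup $F \subset H$ of order coprime to $p$. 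Thus $\Lambda(U_{v}) \cong \Z_{p}[F]\llbracket T \rrbracket$ is a finite product of regular local rings of dimension two. The character $\psi_{0} := \chi_{\mathrm{cyc}}^{-1}|_{F}$ takes values in $\Z_{p}^{\times}$, so the associated isotypic idempotent $e_{\psi_{0}} \in \Z_{p}[F]$ gives a direct summand $e_{\psi_{0}}\Lambda(U_{v}) \cong \Z_{p}\llbracket T \rrbracket$ of $\Lambda(U_{v})$, and $\Z_{p}(-1) \cong e_{\psi_{0}}\Lambda(U_{v})/(T-\delta)\,e_{\psi_{0}}\Lambda(U_{v})$ for an element $\delta \in \Z_{p}$ encoding the action of a topological generator of the $\Z_{p}$-factor of $U_{v}$.

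The resulting length-one projective resolution of $\Z_{p}(-1)$ over $\Lambda(U_{v})$ induces, via the exact functor $\Lambda(\mathcal{G}) \otimes_{\Lambda(U_{v})}(-)$, a length-one resolution
\[
0 \longrightarrow \Lambda(\mathcal{G}) e_{\psi_{0}} \xrightarrow{\,\cdot\, (T-\delta)\,} \Lambda(\mathcal{G}) e_{\psi_{0}} \longrightarrow \ind_{U_{v}}^{\mathcal{G}} \Z_{p}(-1) \longrightarrow 0
\]
over $\Lambda(\mathcal{G})$, whose outer terms are projective as direct summands of $\Lambda(\mathcal{G})$. Summing over $v \in T'$ and applying $(1-j)/2$ then establishes (i), and (ii) follows immediately from the definition of projective dimension. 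For (iii), each $\ind_{U_{v}}^{\mathcal{G}} \Z_{p}(-1)$ is finitely generated as a $\Z_{p}$-module (because $\Z_{p}(-1)$ is and $[\mathcal{G}:U_{v}] < \infty$), and any finitely generated $\Z_{p}$-module carrying a compatible $R$-action is $R$-torsion by Cayley--Hamilton applied to the action of a topological generator of $\Gamma_{0}$.

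The main technical obstacle will be the careful justification of the structural decomposition $U_{v} \cong F \times \Z_{p}$ with $F$ of order coprime to $p$, which requires both the one-dimensional $p$-adic Lie structure of $\mathcal{G}$ and the infinite order of $\sigma_{v}$, together with the explicit identification of the principal generator $T - \delta$. The remaining steps (exactness of the minus-part functor, flatness of induction, the Cayley--Hamilton argument) are routine.
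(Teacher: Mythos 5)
Your argument is correct and, at its core, follows the same route as the paper: construct a length-one projective resolution of $\Z_p(-1)$ over $\Lambda(\mathcal{G}_{w_\infty})$, apply the exact induction functor $\Lambda(\mathcal{G})\otimes_{\Lambda(\mathcal{G}_{w_\infty})}(-)$, sum over $v \in T'$, and descend to $\Lambda(\mathcal{G})_{-}$ by the idempotent $(1-j)/2$ (available since $p$ is odd). Claim (ii) follows from (i) by general nonsense, and (iii) by noting $I_{T'}$ is finitely generated over $\Z_p$ since each $[\mathcal{G}:\mathcal{G}_{w_\infty}]$ is finite. All of this matches the paper.

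The difference lies in the shape of the resolution. The paper directly exhibits
\[
0 \longrightarrow \Lambda(\mathcal{G}_{w_\infty}) \longrightarrow \Lambda(\mathcal{G}_{w_\infty}) \longrightarrow \Z_p(-1) \longrightarrow 0,
\]
with the first map given by right multiplication by $1 - \chi_{\mathrm{cyc}}(\phi_{w_\infty})\phi_{w_\infty}$, and leaves the exactness unverified. You instead make explicit the structure $\mathcal{G}_{w_\infty} \cong F \times \Z_p$ (with $|F|$ prime to $p$, since $v\nmid p$ is unramified and finitely decomposed) and pass to the idempotent summand $e_{\psi_0}\Lambda(\mathcal{G}_{w_\infty}) \cong \Z_p\llbracket T\rrbracket$, where $\psi_0 = \chi_{\mathrm{cyc}}^{-1}|_F$ is $\Z_p$-valued, arriving at a resolution by the smaller projective $\Lambda(\mathcal{G})e_{\psi_0}$ instead of the free module $\Lambda(\mathcal{G})$. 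Your computation is precisely the verification that the paper's sequence is exact: your $T-\delta$ is, up to a unit, the image of $1 - \chi_{\mathrm{cyc}}(\phi_{w_\infty})\phi_{w_\infty}$ in $e_{\psi_0}\Lambda(\mathcal{G}_{w_\infty})$, while in the other isotypic components that element is a unit. One reason the paper prefers its less economical presentation is that the explicit generator reappears downstream: its reduced norm is the element $\xi_v$ used in the proof of Proposition \ref{prop:EIMC-gives-Fitting} to write down a generator of $\Fitt_{\Lambda(\mathcal{G})_{-}}(I_T)$ in closed form, whereas the generator $T-\delta$ in your version is less directly suited to that Fitting invariant calculation.
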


\begin{proof}
For each $v \in T'$ we have an exact sequence of $\Lambda(\mathcal{G}_{w_{\infty}})$-modules
\begin{equation}\label{eqn:resolution-zp(-1)}
0 \longrightarrow \Lambda(\mathcal{G}_{w_{\infty}}) \longrightarrow \Lambda(\mathcal{G}_{w_{\infty}}) \longrightarrow \Z_{p}(-1) \longrightarrow 0,
\end{equation}
where the injection is right multiplication by $1 - \chi_{\mathrm{cyc}}(\phi_{w_{\infty}}) \phi_{w_{\infty}}$
and $\phi_{w_{\infty}}$ denotes the Frobenius automorphism at $w_{\infty}$. 
Claim (i) now follows easily, and this implies claim (ii).
For each $v \in T'$ the index $[\mathcal{G}:\mathcal{G}_{w_{\infty}}]$ is finite; 
thus $I_{T'}$ is finitely generated as a $\Z_{p}$-module, giving claim (iii).
\end{proof}

\begin{prop}\label{prop:construction-of-YST(-1)}
There exists a $\Lambda(\mathcal{G})_{-}$-module $Y_{S}^{T} (-1)$ and a commutative diagram
\begin{equation}\label{eqn:complex-diagram} 
\xymatrix{
{0} \ar[r] & X_{S}^{+} (-1) \ar[r] \ar[d] & Y_{S}^{T} (-1) \ar[r] \ar[d] &
I_{T} \ar[r] \ar@{=}[d] & \Z_{p}(-1) \ar[r] \ar@{=}[d] & 0\\
{0} \ar[r] & X_{S_{p}}^{+} (-1) \ar[r] \ar[d] &  \Hom(A_{L_{\infty}}^{T}, \Q_{p} / \Z_{p}) \ar[r] \ar[d] &
I_{T} \ar[r] &  \Z_{p}(-1) \ar[r] & 0,\\
& 0 & 0 & & &
}\end{equation}
with exact rows and columns
where the middle two terms of upper and lower rows (concentrated in degrees $-1$ and $0$) represent $C_{S}^{\bullet}(L_{\infty}^{+}/K)(-1)$ and $C_{S_{p}}^{\bullet}(L_{\infty}^{+}/K)(-1)$,
respectively.
\end{prop}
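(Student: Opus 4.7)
The approach is to first establish the bottom row as an immediate consequence of Corollary \ref{cor:isom-cT-complex}, and then to construct $Y_{S}^{T}(-1)$ so that the two-term complex $[Y_{S}^{T}(-1) \to I_{T}]$ placed in degrees $-1$ and $0$ represents $C_{S}^{\bullet}(L_{\infty}^{+}/K)(-1)$, equipped with a morphism to $[\Hom(A_{L_{\infty}}^{T},\Q_{p}/\Z_{p}) \to I_{T}]$ that is the identity on the degree-$0$ term $I_{T}$. Both rows are then obtained as the four-term exact cohomology sequences of these two-term complexes, and the requested commutative diagram \eqref{eqn:complex-diagram} is forced by the chain-level morphism between them.

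For the bottom row, apply Pontryagin duality to the exact sequence \eqref{eqn:limit-sequence}, which presents $C^{T \bullet}(L_{\infty}/K)$ as a two-term complex in degrees $0$ and $1$. Duality interchanges these degrees and uses the identifications $\mu_{p^{\infty}}^{\vee} \simeq \Z_{p}(-1)$, $((\bigoplus_{v \in T}\ind_{\mathcal{G}_{w_{\infty}}}^{\mathcal{G}}\mu_{p^{\infty}})^{-})^{\vee} \simeq I_{T}$ (compatibility of Pontryagin duality with induction along finite-index subgroups and with the involution $j$), together with the Kummer duality $A_{L_{\infty}}^{\vee} \simeq X_{S_{p}}^{+}(-1)$ already invoked in the proof of Proposition \ref{prop:coh-interpretation-Linfty/K}. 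The dualised four-term sequence is then exactly the bottom row of \eqref{eqn:complex-diagram}, and by Corollary \ref{cor:isom-cT-complex} its middle two terms represent $C_{S_{p}}^{\bullet}(L_{\infty}^{+}/K)(-1)$.

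For the top row, the plan is to compare $C_{S}^{\bullet}(L_{\infty}^{+}/K)$ and $C_{S_{p}}^{\bullet}(L_{\infty}^{+}/K)$ by means of the \'etale localisation triangle for the open immersion $U_{\infty,S} \hookrightarrow U_{\infty,S_{p}}$ over $L_{\infty}^{+}$. After Pontryagin dualisation and a Tate twist by $-1$ this produces a distinguished triangle whose third term is supported on the finite primes lying in $S \setminus S_{p}$, and whose long exact cohomology sequence furnishes the surjection $X_{S}^{+}(-1) \twoheadrightarrow X_{S_{p}}^{+}(-1)$ in the leftmost column of \eqref{eqn:complex-diagram} and also controls its kernel. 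The key step is then to choose a two-term representative of $C_{S}^{\bullet}(L_{\infty}^{+}/K)(-1)$ whose degree-$0$ term is $I_{T}$ and whose induced map to the representative $[\Hom(A_{L_{\infty}}^{T},\Q_{p}/\Z_{p}) \to I_{T}]$ of $C_{S_{p}}^{\bullet}(L_{\infty}^{+}/K)(-1)$ is the identity in degree $0$. Defining $Y_{S}^{T}(-1)$ to be the degree-$(-1)$ term of this representative, the top row of \eqref{eqn:complex-diagram} emerges as its four-term cohomology sequence, and the asserted surjectivities in the vertical maps follow immediately.

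The main obstacle is producing this rigidified two-term representative: a priori the morphism $C_{S}^{\bullet}(L_{\infty}^{+}/K)(-1) \to C_{S_{p}}^{\bullet}(L_{\infty}^{+}/K)(-1)$ is only represented up to chain homotopy, and its degree-$0$ component must be promoted to the literal identity of $I_{T}$. This can be handled by a mapping-cylinder argument exploiting the explicit resolution of each summand of $I_{T}$ supplied by \eqref{eqn:resolution-zp(-1)} and the vanishing $\Ext^{\geq 2}_{\Lambda(\mathcal{G})_{-}}(I_{T},-) = 0$ from Lemma \ref{lem:IT-proj-dim-at-most-one}(ii), which together guarantee that the necessary lifts exist and that any two choices are homotopic, so that $Y_{S}^{T}(-1)$ is well defined up to canonical isomorphism.
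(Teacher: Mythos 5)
Your reduction of the bottom row to Pontryagin duality applied to \eqref{eqn:limit-sequence} together with Corollary \ref{cor:isom-cT-complex} matches the paper's argument, and using the \'etale localisation triangle for $U_{\infty,S}\hookrightarrow U_{\infty,S_p}$ to produce the comparison morphism and the surjection on $H^{-1}$ is a reasonable alternative to the paper's citation of \cite[Lemma 2.4]{MR2371375}. The gap is in your final step. You correctly identify the obstacle---producing a two-term representative $[Y_S^T(-1)\to I_T]$ of $C_S^\bullet(L_\infty^+/K)(-1)$ together with a chain map to $[\Hom(A_{L_\infty}^T,\Q_p/\Z_p)\to I_T]$ that is the identity in degree $0$---but the claim that a ``mapping-cylinder argument'' plus $\Ext^{\geq 2}_{\Lambda(\mathcal{G})_-}(I_T,-)=0$ handles it does not hold up. The mapping cylinder (or cone) of a chain map $P^\bullet\to[A\to I_T]$ with $P^\bullet$ a two-term complex of projectives is a three-term complex and is quasi-isomorphic to the \emph{target}, not the source, so it cannot serve directly as the desired representative; and no naive chain-level rectification (making $f^0$ surjective, forming a fibre product over $I_T$ or over $\Z_p(-1)$, etc.) produces a module $Y$ satisfying simultaneously $\ker(Y\to I_T)\simeq X_S^+(-1)$, the correct $\Ext^2$-class, and a compatible map $Y\to\Hom(A_{L_\infty}^T,\Q_p/\Z_p)$.

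What is actually required is the following: writing $W_T:=\ker(I_T\twoheadrightarrow\Z_p(-1))$, one must produce a class $\tilde\varepsilon\in\Ext^1_{\Lambda(\mathcal{G})_-}(W_T,X_S^+(-1))$ that (a) maps to the class of $C_S^\bullet(L_\infty^+/K)(-1)$ under the connecting map $\alpha_S\colon\Ext^1(W_T,X_S^+(-1))\to\Ext^2(\Z_p(-1),X_S^+(-1))$, and \emph{simultaneously} (b) maps under $\gamma$ (push-forward along $X_S^+(-1)\twoheadrightarrow X_{S_p}^+(-1)$) to the class $\varepsilon$ of the extension $0\to X_{S_p}^+(-1)\to\Hom(A_{L_\infty}^T,\Q_p/\Z_p)\to W_T\to 0$. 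Condition (a) gives the top row; condition (b) gives the vertical map. The vanishing of $\Ext^{\geq 2}(I_T,-)$ gives surjectivity of $\alpha_S$, so (a) alone is easy. But satisfying (a) and (b) at once forces a genuine diagram chase in the $3\times 3$ grid of $\Ext$ groups obtained from $0\to W_T\to I_T\to\Z_p(-1)\to 0$ and $0\to Z_S\to X_S^+(-1)\to X_{S_p}^+(-1)\to 0$; one also uses $\Ext^2(I_T,Z_S)=0$ to show that any discrepancy $\gamma(\tilde\varepsilon_0)-\varepsilon\in\ker(\alpha_p)$ can be absorbed by modifying $\tilde\varepsilon_0$ by an element of $\ker(\alpha_S)$. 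This is precisely what the paper's proof carries out, together with the input (here, from the localisation triangle; in the paper, from \cite[Lemma 2.4]{MR2371375}) that $\beta$ sends the class of $C_S^\bullet(-1)$ to that of $C_{S_p}^\bullet(-1)$. Your sketch assumes the existence of the rigidified representative rather than establishing it.
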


\begin{proof}
Using \eqref{eqn:limit-sequence} and Corollary \ref{cor:isom-cT-complex}
we see that for every finite set $T$ of places of $K$ such that $\Hyp(S_{p} \cup S_{\ram} \cup S_{\infty},T)$ is satisfied,
the extension class of the complex $C_{S_{p}}^{\bullet}(L_{\infty}^{+}/K)(-1)$ in
$\Ext^{2}_{\Lambda(\mathcal{G})_{-}} (\Z_{p}(-1), X_{S_{p}}^{+} (-1))$ may be represented by the exact sequence
\begin{equation}\label{eqn:ext-rep-I_T}
0 \longrightarrow X_{S_{p}}^{+} (-1) \longrightarrow \Hom(A_{L_{\infty}}^{T}, \Q_{p} / \Z_{p}) \longrightarrow
I_{T} \longrightarrow \Z_{p}(-1) \longrightarrow 0.
\end{equation}

Let $Z_{S}$ be the kernel of the natural surjection $X_{S}^{+}(-1) \twoheadrightarrow X_{S_{p}}^{+}(-1)$
and let $W_{T}$ be the kernel of the right-most surjection in \eqref{eqn:ext-rep-I_T}.
Then the short exact sequences
\[
0 \longrightarrow W_{T} \longrightarrow I_{T} \longrightarrow \Z_{p}(-1) \longrightarrow 0, \qquad 
0 \longrightarrow Z_{S} \longrightarrow X_{S}^{+}(-1) \longrightarrow X_{S_{p}}^{+}(-1) \longrightarrow 0,
\]
induce long exact sequences in cohomology, which by Lemma \ref{lem:IT-proj-dim-at-most-one} (ii) give the following diagram 
\[
\xymatrix{
\Ext^{1}(W_{T},Z_{S}) \ar[r] \ar[d] &
\Ext^{2}(\Z_{p}(-1),Z_{S}) \ar[r] \ar[d] & 0\\
\Ext^{1}(W_{T},X_{S}^{+}(-1)) \ar[r]^{\alpha_{S}} \ar[d]^{\gamma} & \Ext^{2}(\Z_{p}(-1),X_{S}^{+}(-1)) \ar[r] \ar[d]^{\beta} & 0\\
\Ext^{1}(W_{T},X_{S_{p}}^{+}(-1)) \ar[r]^{\alpha_{p}} \ar[d] & \Ext^{2}(\Z_{p}(-1),X_{S_{p}}^{+}(-1))  \ar[r] \ar[d] & 0\\
\Ext^{2}(W_{T},Z_{S}) \ar[r]^{\sim}  & \Ext^{3}(\Z_{p}(-1),Z_{S}), &
}
\]
where we have omitted the subscript $\Lambda(\mathcal{G})_{-}$ from all $\Ext$-groups, 
and all rows and columns are exact. 
Note that the top two squares are commutative (see \cite[(7.3), p.\ 140]{MR1438546}) and that the bottom square is anti-commutative
(see  \cite[Exercise 9.9, p.\ 156]{MR1438546}). 
By \cite[Lemma 2.4]{MR2371375} $\beta$ maps (the class of) $C_{S}^{\bullet}(L_{\infty}^{+}/K)(-1)$ to 
$C_{S_{p}}^{\bullet}(L_{\infty}^{+}/K)(-1)$.
Let 
\[
\varepsilon := [ 0 \longrightarrow X_{S_{p}}^{+} (-1) \longrightarrow \Hom(A_{L_{\infty}}^{T}, \Q_{p} / \Z_{p}) \longrightarrow
W_{T} \longrightarrow 0] \in \Ext_{\Lambda(\mathcal{G})_{-}}^{1}(W_{T},X_{S_{p}}^{+}(-1)).
\]
Then the fact that $C_{S_{p}}^{\bullet}(L_{\infty}^{+}/K)(-1)$ is represented by 
\eqref{eqn:ext-rep-I_T} shows that $\alpha_{p}$ maps $\varepsilon$ to $C_{S_{p}}^{\bullet}(L_{\infty}^{+}/K)(-1)$.
Now a diagram chase shows that there exists a preimage of $\varepsilon$ under $\gamma$ that is mapped to 
$C_{S}^{\bullet}(L_{\infty}^{+}/K)(-1)$ by $\alpha_{S}$.
\end{proof}

The proof of the following lemma explains why we write $Y_{S}^{T} (-1)$ rather than $Y_{S}^{T}$.
Note that every $\Lambda(\mathcal{G})_{-}$-module $M$ may be written as an $n$-fold Tate twist for every $n \in \Z$;
simply write $M = M(-n)(n)$.

\begin{lemma}\label{lem:YST(-1)-has-proj-dim-at-most-one}
The projective dimension of the $\Lambda(\mathcal{G})_{-}$-module $Y_{S}^{T} (-1)$ is at most $1$.
Moreover, $Y_{S}^{T} (-1)$ is $R$-torsion.
\end{lemma}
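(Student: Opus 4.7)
The lemma has two claims, which I plan to treat separately.

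For the $R$-torsion assertion, I would split the 4-term exact sequence in the upper row of \eqref{eqn:complex-diagram} into two short exact sequences via $W_T' := \ker(I_T \to \Z_p(-1))$. Iwasawa's theorem (recalled in \S\ref{subsec:mu=0}) shows that $X_S$ is $R$-torsion because $L_\infty^+$ is totally real, and Tate twisting preserves this property, so $X_S^+(-1)$ is $R$-torsion. The modules $I_T$ and $\Z_p(-1)$ are finitely generated over $\Z_p$ (the first by Lemma \ref{lem:IT-proj-dim-at-most-one}(iii)) and hence $R$-torsion. Consequently $Y_S^T(-1)$ is sandwiched between $R$-torsion modules in the two short exact sequences and is therefore itself $R$-torsion.

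For the projective dimension bound, the strategy is to exploit the fact that the two-term complex $[Y_S^T(-1) \to I_T]$ concentrated in degrees $-1$ and $0$ represents the perfect complex $C_S^\bullet(L_\infty^+/K)(-1) \in \mathcal{D}^{\perf}\tor(\Lambda(\mathcal{G})_-)$. Via the Ritter--Weiss construction in \cite{MR2114937} combined with the comparison \cite[Theorem 2.4]{MR3072281} (after taking minus parts and Tate twisting), I would produce a quasi-isomorphic representative $[P^{-1} \to P^0]$ with both $P^{-1}$ and $P^0$ finitely generated projective $\Lambda(\mathcal{G})_-$-modules. A choice of quasi-isomorphism $\phi \colon [P^{-1} \to P^0] \to [Y_S^T(-1) \to I_T]$ has acyclic mapping cone, yielding the short exact sequence
\[
0 \longrightarrow P^{-1} \longrightarrow P^0 \oplus Y_S^T(-1) \longrightarrow I_T \longrightarrow 0.
\]
Since $P^{-1}$ is projective and $I_T$ has projective dimension at most $1$ by Lemma \ref{lem:IT-proj-dim-at-most-one}(i), the standard estimate for projective dimension in a short exact sequence forces the middle term to have projective dimension at most $1$, and this bound descends to $Y_S^T(-1)$ as a direct summand.

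The hard part is justifying the existence of the two-term projective representative $[P^{-1} \to P^0]$: this amounts to verifying that the tor-amplitude of the perfect complex $C_S^\bullet(L_\infty^+/K)(-1)$ is contained in $[-1, 0]$, which is essentially the content of the Ritter--Weiss translation complex construction transferred to the \'etale-cohomological setting used here.
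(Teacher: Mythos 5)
Your argument for the $R$-torsion claim is correct and actually more direct than the paper's: split the four-term exact sequence $0 \to X_{S}^{+}(-1) \to Y_{S}^{T}(-1) \to I_{T} \to \Z_{p}(-1) \to 0$ at $W_{T}' := \ker(I_{T} \to \Z_{p}(-1))$, note that $X_{S}^{+}(-1)$ is $R$-torsion by Iwasawa's theorem (Tate twisting preserves this), that $I_{T}$ (hence $W_{T}'$) is $R$-torsion by Lemma \ref{lem:IT-proj-dim-at-most-one}(iii), and conclude. The paper instead deduces both claims simultaneously by first reducing to $T=\{v\}$ a single place and then invoking the Ritter--Weiss construction, so your route to $R$-torsion is a genuinely simpler alternative.

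The projective dimension argument, however, has a fatal flaw at exactly the point you flag as the hard part: the two-term projective representative $[P^{-1} \to P^{0}]$ of $C_{S}^{\bullet}(L_{\infty}^{+}/K)(-1)$ that you wish to produce does not exist in general. Indeed, suppose $P^{-1}$ and $P^{0}$ are finitely generated projective $\Lambda(\mathcal{G})_{-}$-modules concentrated in degrees $-1,0$ and quasi-isomorphic to $C_{S}^{\bullet}(L_{\infty}^{+}/K)(-1)$. Then $H^{-1}$ of the complex is $\ker(P^{-1}\to P^{0})$, a submodule of $P^{-1}$. Since $\Lambda(\mathcal{G})_{-}$ is a free $R$-module of finite rank and $R=\Z_{p}\llbracket\Gamma_{0}\rrbracket$ is a domain, every finitely generated projective $\Lambda(\mathcal{G})_{-}$-module is $R$-torsion-free, and so is $\ker(P^{-1}\to P^{0})$. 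But $H^{-1}(C_{S}^{\bullet}(L_{\infty}^{+}/K)(-1)) = X_{S}^{+}(-1)$ is $R$-torsion (again by Iwasawa's theorem) and is nonzero in general, a contradiction. The Ritter--Weiss construction cited does not give a two-term complex of projectives; rather, as used in the paper, it yields a four-term exact sequence $0 \to X_{S}^{+} \to \tilde{Y}_{S}^{\{v\}} \to \ind_{\mathcal{G}^{+}_{w^{+}_{\infty}}}^{\mathcal{G}^{+}}\Z_{p} \to \Z_{p} \to 0$ in which the middle module $\tilde{Y}_{S}^{\{v\}}$ has projective dimension at most $1$ and is $R$-torsion but is not projective. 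The paper's actual proof first reduces to the case $T=\{v\}$ (via the snake lemma and Lemma \ref{lem:IT-proj-dim-at-most-one}), then identifies $Y_{S}^{\{v\}}$ with $\tilde{Y}_{S}^{\{v\}}$ by comparing the two four-term sequences representing the same class in $\Ext^{2}_{\Lambda(\mathcal{G}^{+})}(\Z_{p},X_{S}^{+})$ (using \cite[Theorem 2.4]{MR3072281}), and only then deduces the projective dimension bound from the corresponding property of $\tilde{Y}_{S}^{\{v\}}$. You cannot bypass this explicit identification by appealing to abstract tor-amplitude; the very module $X_{S}^{+}(-1)$ obstructing the existence of a two-term projective resolution is the one you would need to understand concretely to make any such appeal go through.
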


\begin{proof}
Let $T \subseteq T''$ be a second finite set of places of $K$ such that $\Hyp(S,T'')$ is satisfied and let $T':=T''-T$.
The short exact sequence
\[
0 \longrightarrow W_{T} \longrightarrow W_{T''} \longrightarrow I_{T'} \longrightarrow 0
\]
induces a long exact sequence in cohomology which by Lemma \ref{lem:IT-proj-dim-at-most-one} (ii) becomes
\[
\Ext^{1}_{\Lambda(\mathcal{G})_{-}}(W_{T''},X_{S}^{+}(-1)) 
\longrightarrow \Ext^{1}_{\Lambda(\mathcal{G})_{-}}(W_{T},X_{S}^{+}(-1)) \longrightarrow 0.
\]
Thus using top row of \eqref{eqn:complex-diagram} for $T$ and $T''$
we have a commutative diagram 
\begin{equation*}\label{eqn:change-of-Ts} 
\xymatrix{
{0} \ar[r] & X_{S}^{+} (-1) \ar[r] \ar@{=}[d] & Y_{S}^{T}(-1) \ar[r] \ar[d] &
I_{T} \ar[r] \ar[d] & \Z_{p}(-1) \ar[r] \ar@{=}[d] & 0\\
{0} \ar[r] & X_{S}^{+} (-1) \ar[r] &  Y_{S}^{T''}(-1)  \ar[r] &
I_{T''} \ar[r] &  \Z_{p}(-1) \ar[r] & 0.
}
\end{equation*}
Applying the snake lemma now gives a short exact sequence
\[
0 \longrightarrow Y_{S}^{T} (-1) \longrightarrow Y_{S}^{T''} (-1) \longrightarrow I_{T'} \longrightarrow 0.
\]
Therefore Lemma \ref{lem:IT-proj-dim-at-most-one} shows that the claim does not depend on the particular choice of $T$ and so by Remark \ref{rmk:conditions-on-T} we can and do assume that $T$ consists of a single place $v$.

Recall that $\mathcal{G}^{+} := \mathcal{G} / \langle j \rangle= \Gal(L_{\infty}^{+}/K)$ and let $\mathcal{G}^{+}_{w_{\infty}^{+}}$
denote the decomposition subgroup at $w_{\infty}^{+}$,
where $w_{\infty}^{+}$ denotes the place of $L_{\infty}^{+}$ below $w_{\infty}$. 
Then we have an isomorphism of $\Lambda(\mathcal{G}^{+})$-modules
\begin{equation} \label{eqn:ind-Zp-iso}
(\ind_{\mathcal{G}_{w_{\infty}}}^{\mathcal{G}} \Z_{p}(-1))^{-}(1) \simeq \ind_{\mathcal{G}^{+}_{w^{+}_{\infty}}}^{\mathcal{G}^{+}} \Z_{p}.
\end{equation}

Let $\Delta(\mathcal{G}^{+})$ denote the kernel of the augmentation map $\Lambda(\mathcal{G}^{+}) \twoheadrightarrow \Z_{p}$.
Given a $\Lambda(\mathcal{G}^{+})$-monomorphism $\psi:\Lambda(\mathcal{G}^{+}) \rightarrow \Delta(\mathcal{G}^{+})$, 
Ritter and Weiss \cite[\S 4]{MR2114937} construct a four term exact sequence whose class in 
$\Ext^{2}_{\Lambda(\mathcal{G}^{+})}(\Z_{p},X_{S}^{+})$ is in fact independent of the choice of $\psi$.
To make this construction explicit, we now choose $\psi$ to be given by right multiplication with $(1 - \phi_{w_{\infty}^{+}})$, 
where $\phi_{w_{\infty}^{+}}$ denotes the Frobenius automorphism at $w_{\infty}^{+}$.
Let $\tilde{\psi}$ be $\psi$ followed by the inclusion $\Delta(\mathcal{G}^{+}) \subset \Lambda(\mathcal{G}^{+})$.
Then the cokernel of $\tilde{\psi}$ identifies with $\ind_{\mathcal{G}^{+}_{w^{+}_{\infty}}}^{\mathcal{G}^{+}} \Z_{p}$, 
and the Ritter and Weiss construction gives a four term exact sequence
\[
0 \longrightarrow X_{S}^{+} \longrightarrow \tilde{Y}^{\{v\}}_{S} \longrightarrow \ind_{\mathcal{G}^{+}_{w^{+}_{\infty}}}^{\mathcal{G}^{+}} \Z_{p}
\longrightarrow \Z_{p} \longrightarrow 0,
\]
where the $\Lambda(\mathcal{G}^{+})$-module $\tilde{Y}^{\{v\}}_{S}$ has projective dimension at most $1$
and is $R$-torsion.

It follows from \cite[Theorem 2.4]{MR3072281}, Corollary \ref{cor:isom-cT-complex} and \eqref{eqn:ind-Zp-iso} 
that we have a commutative diagram
\[ 
\xymatrix{
{0} \ar[r] & X_{S}^{+} \ar[r] \ar@{=}[d] & Y_{S}^{\{v\}}  \ar[r] \ar[d] &
\ind_{\mathcal{G}^{+}_{w^{+}_{\infty}}}^{\mathcal{G}^{+}} \Z_{p} \ar[r] \ar@{=}[d] & \Z_{p} \ar[r] \ar@{=}[d] & 0\\
{0} \ar[r] & X_{S}^{+} \ar[r] &  \tilde{Y}^{\{v\}}_{S} \ar[r] &
\ind_{\mathcal{G}^{+}_{w^{+}_{\infty}}}^{\mathcal{G}^{+}} \Z_{p} \ar[r] &  \Z_{p} \ar[r] & 0.
}
\]
Hence the vertical arrow must be an isomorphism, and thus the projective dimension of the $\Lambda(\mathcal{G}^{+})$-module
$Y_{S}^{\{v\}}$ is at most $1$ and $Y_{S}^{\{v\}}$ is $R$-torsion.
Therefore the same claims are true of the $\Lambda(\mathcal{G})_{-}$-module $Y_{S}^{\{v\}} (-1)$.
\end{proof}

\subsection{Consequences in terms of non-commutative Fitting invariants}\label{subsec:conseq-noncomm-Fitt}
We will henceforth assume that all ramified places belong to $S$. 
For $v \in T$ we put
\[
\xi_{v} := \nr(1 - \chi_{\mathrm{cyc}}(\phi_{w_{\infty}}) \phi_{w_{\infty}}).
\]
We put
\[
\Psi_{S,T} = \Psi_{S,T}(L_{\infty} / K) :=  t_{\mathrm{cyc}}^{1}(\Phi_{S}) \cdot \prod_{v \in T} \xi_{v}.
\]
Note that this slightly differs from the corresponding element $\Psi_{S,T}$ in \cite{MR3072281}.

\begin{prop}\label{prop:EIMC-gives-Fitting}
Suppose that the EIMC holds for $L_{\infty}^{+}/K$. 
Then $\Psi_{S,T}$ is a generator of $\Fitt_{\Lambda(\mathcal{G})_{-}}(Y_{S}^{T} (-1))$.
\end{prop}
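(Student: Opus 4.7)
The plan is to deduce the result from the EIMC by first Tate-twisting the canonical complex $C_S^\bullet(L_\infty^+/K)$ by $-1$ (thereby transferring from $\Lambda(\mathcal{G}^+)$-modules to $\Lambda(\mathcal{G})_-$-modules, using $\chi_{\mathrm{cyc}}(j) = -1$ and the resulting $\Lambda(\mathcal{G})$-isomorphism $\Lambda(\mathcal{G}^+)(-1) \simeq \Lambda(\mathcal{G})_-$) and then using Proposition \ref{prop:construction-of-YST(-1)} to relate the twisted canonical complex to $Y_S^T(-1)$ via the Euler factors encoded in $I_T$.

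By the assumed EIMC there exists $\zeta_S \in K_1(\mathcal{Q}(\mathcal{G}^+))$ with $\partial(\zeta_S) = -[C_S^\bullet(L_\infty^+/K)]$ and $\nr(\zeta_S) = \Phi_S$, so by \eqref{eqn:fitt-of-complex} the class $\Fitt_{\Lambda(\mathcal{G}^+)}(C_S^\bullet(L_\infty^+/K))$ is generated by $\Phi_S^{-1}$. The automorphism $t_{\mathrm{cyc}}^1$ of $\mathcal{Q}(\mathcal{G})$ interchanges the central idempotents $e^\pm = (1 \pm j)/2$, and therefore restricts to an isomorphism $\zeta(\mathcal{Q}(\mathcal{G}^+)) \xrightarrow{\sim} \zeta(\mathcal{Q}(\mathcal{G})_-)$ preserving the integral sublattices. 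A direct inspection shows that if $H$ is a quadratic presentation of a $\Lambda(\mathcal{G}^+)$-module $M$, then up to units $t_{\mathrm{cyc}}^1(H)$ is a quadratic presentation of the $\Lambda(\mathcal{G})_-$-module $M(-1)$; since reduced norms are natural under algebra automorphisms, one obtains $\Fitt_{\Lambda(\mathcal{G})_-}(C_S^\bullet(L_\infty^+/K)(-1)) = \langle t_{\mathrm{cyc}}^1(\Phi_S)^{-1} \rangle$.

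By Proposition \ref{prop:construction-of-YST(-1)}, the complex $C_S^\bullet(L_\infty^+/K)(-1)$ is quasi-isomorphic in $\mathcal{D}(\Lambda(\mathcal{G})_-)$ to $[Y_S^T(-1) \to I_T]$ concentrated in degrees $-1$ and $0$, and by Lemmas \ref{lem:IT-proj-dim-at-most-one} and \ref{lem:YST(-1)-has-proj-dim-at-most-one} together with Remark \ref{rmk:M-admits-quadratic-presentation} both $Y_S^T(-1)$ and $I_T$ have projective dimension at most $1$ and admit quadratic presentations. Inducing the resolution \eqref{eqn:resolution-zp(-1)} from $\mathcal{G}_{w_\infty}$ to $\mathcal{G}$ for each $v \in T$, taking minus parts and summing, yields a block-diagonal quadratic presentation of $I_T$ with diagonal blocks $1 - \chi_{\mathrm{cyc}}(\phi_{w_\infty}) \phi_{w_\infty}$, so $\Fitt_{\Lambda(\mathcal{G})_-}(I_T) = \langle \prod_{v \in T} \xi_v \rangle$. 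Applying Lemma \ref{lem:fitt-eq-complex-quad} then gives
\[
\Fitt_{\Lambda(\mathcal{G})_-}(C_S^\bullet(L_\infty^+/K)(-1)) = \Fitt_{\Lambda(\mathcal{G})_-}(Y_S^T(-1))^{-1} \cdot \Fitt_{\Lambda(\mathcal{G})_-}(I_T),
\]
which rearranges to $\Fitt_{\Lambda(\mathcal{G})_-}(Y_S^T(-1)) = \langle t_{\mathrm{cyc}}^1(\Phi_S) \cdot \prod_{v \in T} \xi_v \rangle = \langle \Psi_{S,T} \rangle$, as required.

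The main obstacle will be the Tate-twist identification in the second paragraph — carefully verifying that the natural passage from $\Lambda(\mathcal{G}^+)$- to $\Lambda(\mathcal{G})_-$-structures interacts with quadratic presentations and reduced norms via exactly $t_{\mathrm{cyc}}^1$, as opposed to $t_{\mathrm{cyc}}^{-1}$ or some other automorphism (a simple test case like $\Lambda(\mathcal{G}^+)/(\gamma - a)$ is enough to fix the sign, using $(\gamma - u^{-1}a) = u^{-1}(u\gamma - a)$). Once this compatibility is in place, the remainder is a routine bookkeeping exercise with non-commutative Fitting invariants using the machinery of \S\ref{sec:algebraic-prelims}.
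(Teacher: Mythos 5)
Your proof is correct and follows essentially the same approach as the paper's own: invoke the EIMC and definition \eqref{eqn:fitt-of-complex} to get $\Phi_S^{-1}$ as a generator of the Fitting invariant of the canonical complex, transfer via $t_{\mathrm{cyc}}^1$ to $\Lambda(\mathcal{G})_-$, use Proposition \ref{prop:construction-of-YST(-1)} together with Lemmas \ref{lem:IT-proj-dim-at-most-one}, \ref{lem:YST(-1)-has-proj-dim-at-most-one} and Remark \ref{rmk:M-admits-quadratic-presentation} to identify the twisted complex with $[Y_S^T(-1) \to I_T]$, and apply Lemma \ref{lem:fitt-eq-complex-quad} and the product formula for $\Fitt(I_T)$. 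The extra detail you supply about the $t_{\mathrm{cyc}}^1$ transfer (and your worry about the sign of the twist) is sensible but not a departure from the paper's argument, which handles it implicitly; your "block-diagonal" phrasing for $\Fitt(I_T)$ is just Lemma \ref{lem:Fitting-properties}(ii) in disguise.
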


\begin{proof}
Since the EIMC holds, by definition \eqref{eqn:fitt-of-complex} we have that 
$\Phi_{S}^{-1}$ generates the Fitting invariant of $C_{S}^{\bullet}(L_{\infty}^{+}/K) \in \mathcal{D}^{\perf}\tor(\Lambda(\mathcal{G}^{+}))$.
However, $t_{\mathrm{cyc}}^{1}$ induces an isomorphism $\Lambda(\mathcal{G}^{+})(-1) \simeq \Lambda(\mathcal{G})_{-}$ and so 
$t_{\mathrm{cyc}}^{1}(\Phi_{S})^{-1}$ generates the Fitting invariant of $C_{S}^{\bullet}(L_{\infty}^{+}/K)(-1) \in \mathcal{D}^{\perf}\tor(\Lambda(\mathcal{G})_{-})$.
Lemmas \ref{lem:IT-proj-dim-at-most-one} and \ref{lem:YST(-1)-has-proj-dim-at-most-one} show that 
$I_{T}$ and $Y_{S}^{T} (-1)$ are both $R$-torsion $\Lambda(\mathcal{G})_{-}$-modules of projective dimension at most $1$; 
hence they both have quadratic presentations by Remark \ref{rmk:M-admits-quadratic-presentation} 
(or alternatively, \cite[Lemma 6.2]{MR2609173}).
Therefore combining Proposition \ref{prop:construction-of-YST(-1)}
and Lemma \ref{lem:fitt-eq-complex-quad} gives
\[
\Fitt_{\Lambda(\mathcal{G})_{-}}(Y_{S}^{T} (-1))  =  \Fitt_{\Lambda(\mathcal{G})_{-}}\left(C_{S}^{\bullet}(L_{\infty}^{+}/K)(-1)\right)^{-1}
\cdot \Fitt_{\Lambda(\mathcal{G})_{-}} (I_{T}).
\]
The exact sequence \eqref{eqn:resolution-zp(-1)} shows that each $(\ind_{\mathcal{G}_{w_{\infty}}}^{\mathcal{G}} \Z_{p}(-1))^{-}$
has a quadratic presentation and that its Fitting invariant is generated by $\xi_{v}$.
Hence Lemma \ref{lem:Fitting-properties} (ii) gives
\[
\Fitt_{\Lambda(\mathcal{G})_{-}}(I_{T}) = 
\prod_{v\in T} \Fitt_{\Lambda(\mathcal{G})_{-}}\left((\ind_{\mathcal{G}_{w_{\infty}}}^{\mathcal{G}} \Z_{p}(-1))^{-}\right) = 
\prod_{v \in T} \left[ \langle  \xi_{v} \rangle_{\zeta(\Lambda(\mathcal{G})_{-})}\right]_{\nr(\zeta(\Lambda(\mathcal{G})_{-})},
\]
and we therefore obtain the desired result.
\end{proof}

We now suppose that the EIMC holds for $L_{\infty}^{+}/K$. The surjection
\[
Y_{S}^{T} (-1) \longrightarrow \Hom(A_{L_{\infty}}^{T}, \Q_{p} / \Z_{p}) \longrightarrow 0
\]
in diagram \eqref{eqn:complex-diagram}, together with Lemma \ref{lem:Fitting-properties} (i) and Proposition \ref{prop:EIMC-gives-Fitting} then imply that
\[
\Psi_{S,T} \in \Fitt_{\Lambda(\mathcal{G})_{-}}^{\max}(\Hom(A_{L_{\infty}}^{T}, \Q_{p} / \Z_{p})).
\]
As the transition maps in the direct limit $A_{L_{\infty}}^{T} = \varinjlim_{n} A_{L_{n}}^{T}$ are injective
by \cite[Lemma 2.9]{MR3383600}, the transition maps in the projective limit
$\Hom(A_{L_{\infty}}^{T}, \Q_{p} / \Z_{p}) = \varprojlim_{n} (A_{L_{n}}^{T})^{\vee}$ are surjective.
As $\Gamma_{L}:=\Gal(L_{\infty}/L)$ clearly acts trivially on $(A_{L}^{T})^{\vee}$, we have a surjection
\begin{equation} \label{eqn:epi-descent}
\Hom(A_{L_{\infty}}^{T}, \Q_{p} / \Z_{p})_{\Gamma_{L}} \longrightarrow (A_{L}^{T})^{\vee} \longrightarrow 0.
\end{equation}
Fix an odd character $\chi  \in \Irr_{\Q_{p}^{c}}(G)$ and view $\chi$ as an irreducible character of $\mathcal{G}$ with open kernel.
We have 
\[
\phi(j_{\chi}(t_{\mathrm{cyc}}^{1}(\Phi_{S}))) = \phi(j_{\chi \omega}^{1}(\Phi_{S})) = L_{p,S}(0, \chi \omega),
\]
where the first and second equalities follow from Lemma \ref{lem:jchi-tcyc-composition} and  \eqref{eq:PhiS-jr-p-adic}, respectively.
Moreover, $\phi(j_{\chi}(\prod_{v \in T} \xi_{v})) = \delta_{T}(0,\check \chi)$.
As Fitting invariants behave well under base change by Proposition \ref{prop:Fitting-descent}, we have
\[
(\theta_{p,S}^{T})^{\sharp} = \sum_{\chi \in \Irr_{\Q_{p}^{c}}(G)} \phi(j_{\chi}(\Psi_{S,T})) e(\chi) \in
\Fitt_{\Z_{p}[G]_{-}}^{\max}(\Hom(A_{L_{\infty}}^{T}, \Q_{p} / \Z_{p})_{\Gamma_{L}}) \subseteq
\Fitt_{\Z_{p}[G]_{-}}^{\max}((A_{L}^{T})^{\vee}),
\]
where we have again used Lemma \ref{lem:Fitting-properties} (i).
This completes the proof of Theorem \ref{thm:EIMC-implies-BS}.
\end{proof}

\section{Hybrid $p$-adic group rings and Frobenius groups}\label{sec:hybrid-and-frobenius}

\subsection{Hybrid $p$-adic group rings}
We recall material on hybrid $p$-adic group rings from \cite[\S 2]{MR3461042} and \cite[\S 2]{MR3749195}.
We shall sometimes abuse notation by using the symbol $\oplus$ to denote the direct product of rings or orders.

Let $p$ be a prime and let $G$ be a finite group.
For a normal subgroup $N \unlhd G$, let $e_{N} = |N|^{-1}\sum_{\sigma \in N} \sigma$
be the associated central trace idempotent in the group algebra $\Q_{p}[G]$.
Then there is a ring isomorphism $\Z_{p}[G]e_{N} \simeq \Z_{p}[G/N]$.
We now specialise \cite[Definition 2.5]{MR3461042} to the case of $p$-adic group rings
(we shall not need the more general case of $N$-hybrid orders).

\begin{definition}
Let $N \unlhd G$. We say that the $p$-adic group ring $\Z_{p}[G]$ is \emph{$N$-hybrid}
if (i) $e_{N} \in \Z_{p}[G]$ (i.e. $p \nmid |N|$) and (ii) $\Z_{p}[G](1-e_{N})$ is a maximal
$\Z_{p}$-order in $\Q_{p}[G](1-e_{N})$.
\end{definition}

\subsection{Frobenius groups}\label{subsec:frobenius-groups}
We recall the definition and some basic facts about Frobenius groups and then use them to
provide many examples of hybrid group rings.
For further results and examples, we refer the reader to \cite[\S 2.3]{MR3461042} and \cite[\S 2.2]{MR3749195}.

\begin{definition}
A \emph{Frobenius group} is a finite group $G$ with a proper non-trivial subgroup $H$
such that $H \cap gHg^{-1}=\{ 1 \}$ for all $g \in G-H$,
in which case $H$ is called a \emph{Frobenius complement}.
\end{definition}

\begin{theorem}\label{thm:frob-kernel}
A Frobenius group $G$ contains a unique normal subgroup $N$, known as the Frobenius kernel, such that
$G$ is a semidirect product $N \rtimes H$. Moreover:
\begin{enumerate}
\item $|N|$ and $[G:N]=|H|$ are relatively prime.
\item The Frobenius kernel $N$ is nilpotent.
\item If $K \unlhd G $ then either $K \unlhd N$ or $N \unlhd K$.
\item If $\chi \in \Irr_{\C}(G)$ such that  $N \not \leq \ker \chi$ then $\chi= \ind_{N}^{G}(\psi)$ for some $1 \neq \psi \in \Irr_{\C}(N)$.
\end{enumerate}
\end{theorem}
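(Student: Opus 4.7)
The plan is to follow the standard character-theoretic treatment of Frobenius groups, invoking classical results as needed. First I would introduce the candidate Frobenius kernel as
\[
N := \{1\} \cup \left(G - \bigcup_{g \in G} gHg^{-1}\right),
\]
and establish its normality via Frobenius's classical theorem: for each irreducible character $\psi$ of $H$ one produces a character $\psi^{\ast}$ of $G$ whose restriction to $H$ is $\psi$ and which is constant on $N$, and intersecting the kernels of $\psi^{\ast} - \psi(1) \cdot 1_{G}$ over all non-trivial $\psi$ shows that $N$ is precisely a normal subgroup of $G$. Self-normalisation of $H$ (immediate from the Frobenius condition), together with the pairwise trivial intersection of the distinct conjugates of $H$, yields $|N| = [G:H]$, so the trivial intersection $N \cap H = \{1\}$ gives the semidirect decomposition $G = N \rtimes H$, and uniqueness follows from the intrinsic characterisation above.

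For part (i), the semidirect product structure gives $[G:N] = |H|$, and coprimality follows because $H$ acts on $N - \{1\}$ by conjugation without non-trivial fixed points (any such fixed point would contradict the Frobenius property), so every orbit has size exactly $|H|$ and hence $|H|$ divides $|N| - 1$. Part (iii) is a short structural argument: setting $K_{0} := K \cap N \unlhd G$, the quotient $K/K_{0}$ embeds as a normal subgroup of $G/N \cong H$, and invoking the Frobenius condition inside the quotient forces either $K \subseteq N$ (so $K \unlhd N$) or $N \subseteq K$ (so $N \unlhd K$). Part (iv) is Clifford-theoretic: if $N \not\leq \ker \chi$ then $\res^{G}_{N}\chi$ has a non-trivial irreducible constituent $\psi$, and the Frobenius property (via the Brauer permutation lemma applied to the $H$-actions on conjugacy classes and on irreducible characters of $N$) implies that $H$ permutes the non-trivial elements of $\Irr_{\C}(N)$ freely, so the inertia subgroup of $\psi$ in $G$ is exactly $N$ and Clifford's theorem gives $\chi = \ind_{N}^{G}(\psi)$.

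The main obstacle is part (ii), the nilpotence of $N$: this is the celebrated theorem of Thompson from his doctoral thesis. One reduces, using part (i) together with the Frobenius property, to showing that a finite group admitting a fixed-point-free automorphism of prime order is nilpotent; this in turn requires a delicate analysis involving local $p$-subgroup structure and commutator calculus. Since the argument is lengthy and well-documented, I would not reproduce it but simply cite Thompson's theorem as presented in a standard reference such as Isaacs, \emph{Finite Group Theory}, Chapter 6, or Huppert, \emph{Endliche Gruppen I}.
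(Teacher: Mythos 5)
The paper's own ``proof'' of this theorem is purely a citation: part (i) and part (iv) are referred to \cite[\S 14A]{MR632548} (Curtis--Reiner), part (ii) to \cite[10.5.6]{MR1357169} and part (iii) to \cite[Exercise 7, \S 8.5]{MR1357169} (Robinson). Your proposal, by contrast, actually sketches the underlying arguments. Both approaches are legitimate here since these are classical facts, but they are genuinely different in texture: the paper defers entirely to textbooks, whereas you reconstruct the standard proofs -- Frobenius's character-theoretic existence theorem for the kernel $N$, the orbit-counting argument for coprimality in (i), Thompson's fixed-point-free automorphism theorem for nilpotence in (ii), and the Clifford/Brauer-permutation-lemma argument for (iv). Your treatment of (iv) is correct and complete in outline: $H$ acts without non-trivial fixed points on the non-identity conjugacy classes of $N$, hence (by Brauer's permutation lemma) also on the non-principal irreducible characters of $N$, so the inertia group of a non-trivial constituent $\psi$ of $\res^{G}_{N}\chi$ is exactly $N$, forcing $\chi = \ind^{G}_{N}(\psi)$. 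Identifying (ii) as Thompson's theorem and not attempting to reproduce it is the right call.

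One point in (iii) is phrased loosely: you say ``invoking the Frobenius condition inside the quotient,'' but $G/N \cong H$ is not itself a Frobenius group, so the Frobenius condition cannot literally be ``invoked inside the quotient.'' The clean version of that argument is rather: if $K \unlhd G$ and $K \cap N = 1$ while $K \neq 1$, then $K$ and $N$ commute (since $[K,N] \subseteq K \cap N = 1$); picking $1 \neq k \in K$ forces $k$ into some conjugate of $H - \{1\}$, yet $k$ centralises every $n \in N - \{1\}$, placing $k$ in $H \cap nHn^{-1} = \{1\}$, a contradiction. The general case of (iii) then reduces to this after passing to $G/(K \cap N)$, which is again a Frobenius group with kernel $N/(K \cap N)$ (a fact that itself needs a brief check). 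This is a small gap in exposition rather than a flaw in strategy, and it is exactly the kind of detail the paper avoids by citing Robinson.
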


\begin{proof}
For (i) and (iv) see \cite[\S 14A]{MR632548}.
For (ii) see \cite[10.5.6]{MR1357169} and for (iii) see  \cite[Exercise 7, \S 8.5]{MR1357169}.
\end{proof}

\begin{prop}[{\cite[Proposition 2.13]{MR3461042}}]\label{prop:frob-N-hybrid}
Let $G$ be a Frobenius group with Frobenius kernel $N$.
Then for every prime $p$ not dividing $|N|$, the group ring $\Z_{p}[G]$ is $N$-hybrid.
\end{prop}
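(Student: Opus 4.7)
The first hybridicity condition, $e_{N} \in \Z_{p}[G]$, is immediate from the hypothesis $p \nmid |N|$, since this makes $|N|$ invertible in $\Z_{p}$. For the second condition, that $\Z_{p}[G](1-e_{N})$ is a maximal $\Z_{p}$-order in $\Q_{p}[G](1-e_{N})$, my plan is a Morita-theoretic reduction to the well-known fact that $\Z_{p}[N]$ is already a maximal $\Z_{p}$-order (which also holds because $p \nmid |N|$).

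Since maximality of an order is preserved and reflected by faithfully flat unramified base change, I first work over $\mathcal{O}$, the ring of integers of a suitable unramified extension $F/\Q_{p}$. By Theorem \ref{thm:frob-kernel}(iv), every $\chi \in \Irr_{\Q_{p}^{c}}(G)$ with $N \not\subseteq \ker \chi$ has the form $\ind_{N}^{G} \psi$ for some non-trivial $\psi \in \Irr_{\Q_{p}^{c}}(N)$, and the character values of such $\chi$ lie in $\Q_{p}(\zeta_{|N|})$, which is unramified over $\Q_{p}$ because $p \nmid |N|$. Taking $F := \Q_{p}(\zeta_{|N|})$ and decomposing $1 - e_{N} = \sum_{\chi} f_{\chi}$ in $F[G]$ into the central primitive idempotents $f_{\chi}$ attached to such $\chi$, it suffices to show that each $\mathcal{O}[G] f_{\chi}$ is a maximal $\mathcal{O}$-order in $F[G] f_{\chi}$.

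Fix $\chi = \ind_{N}^{G} \psi$. The central primitive idempotent $e_{\psi} \in \mathcal{O}[N] \subseteq \mathcal{O}[G]$ exists, and a direct computation gives $h e_{\psi} h^{-1} = e_{\psi^{h}}$ for $h \in H$. The Frobenius property forces $\mathrm{Stab}_{H}(\psi) = \{1\}$---otherwise $\ind_{N}^{G} \psi$ would be reducible by Mackey's criterion, contradicting $\chi \in \Irr_{\Q_{p}^{c}}(G)$---so $\{e_{\psi^{h}}\}_{h \in H}$ consists of $|H|$ pairwise orthogonal idempotents summing to $f_{\chi}$, and $e_{\psi}$ is a full idempotent of $\mathcal{O}[G] f_{\chi}$ since $H$-conjugation produces every $e_{\psi^{h}}$. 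Using the bimodule decomposition $\mathcal{O}[G] = \bigoplus_{h \in H} \mathcal{O}[N] h$ together with the orthogonalities $e_{\psi} e_{\psi^{h}} = 0$ for $h \neq 1$, I would compute
\[
e_{\psi} \mathcal{O}[G] e_{\psi} \;=\; \mathcal{O}[N] e_{\psi} \;\simeq\; M_{\psi(1)}(\mathcal{O}),
\]
with the last isomorphism following because $\mathcal{O}[N]$ is maximal and $F$ splits $\psi$. The same decomposition shows that $\mathcal{O}[G] e_{\psi} = \bigoplus_{h \in H} \mathcal{O}[N] e_{\psi^{h}} h$ is free of rank $|H|$ as a right $\mathcal{O}[N] e_{\psi}$-module, with basis $\{h e_{\psi}\}_{h \in H}$.

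Standard Morita theory for the full idempotent $e_{\psi}$ then yields the ring isomorphism
\[
\mathcal{O}[G] f_{\chi} \;\simeq\; \End_{\mathcal{O}[N] e_{\psi}}\!\bigl(\mathcal{O}[G] e_{\psi}\bigr) \;\simeq\; M_{|H|}\!\bigl(M_{\psi(1)}(\mathcal{O})\bigr) \;=\; M_{\chi(1)}(\mathcal{O}),
\]
which is manifestly a maximal $\mathcal{O}$-order in $F[G] f_{\chi} \simeq M_{\chi(1)}(F)$. Summing over $\chi$ and descending along the faithfully flat unramified extension $\mathcal{O}/\Z_{p}$ then completes the proof of (ii). I expect the main technical subtlety to lie in the Morita bookkeeping: verifying that $\{h e_{\psi}\}_{h \in H}$ really is a free basis of $\mathcal{O}[G] e_{\psi}$ over the non-commutative ring $\mathcal{O}[N] e_{\psi} \simeq M_{\psi(1)}(\mathcal{O})$ depends crucially on the Frobenius freeness of the $H$-action on non-trivial characters of $N$, and the identification of the resulting endomorphism ring as $M_{\chi(1)}(\mathcal{O})$ must be carried out with the correct conventions for left versus right actions.
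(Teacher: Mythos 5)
The paper states this result only as a citation to \cite[Proposition 2.13]{MR3461042} and does not reproduce a proof, so there is no in-paper argument to compare against directly. What I can say is that your Morita-theoretic argument is correct and self-contained. Condition (i) is immediate. For (ii), your reduction to the unramified extension $F = \Q_{p}(\zeta_{|N|})$ is legitimate, and the easy direction of faithfully flat descent (maximality of $\mathcal{O} \otimes_{\Z_p} \Lambda$ implies maximality of $\Lambda$) is the one you actually need, so that causes no trouble. The key points all check out: for $\chi = \ind_{N}^{G}\psi$ one has $\chi(1)/|G| = \psi(1)/|N| \in \Z_p$ and $\chi$ vanishes off $N$, so $f_{\chi} \in \mathcal{O}[N] \subseteq \mathcal{O}[G]$; irreducibility of $\ind_{N}^{G}\psi$ forces $I_{G}(\psi) = N$ and hence free $H$-action on $\psi$; the computation $\sum_{h\in H} e_{\psi^{h}} = f_{\chi}$ is correct (both equal $\frac{\psi(1)}{|N|}\sum_{n\in N}\chi(n^{-1})n$); the corner ring computation $e_{\psi}\mathcal{O}[G]e_{\psi} = \mathcal{O}[N]e_{\psi}$ uses the orthogonality $e_{\psi}e_{\psi^{h}} = 0$ for $h \neq 1$ exactly as you indicate; and $\{he_{\psi}\}_{h\in H}$ is indeed a free right $\mathcal{O}[N]e_{\psi}$-basis of $\mathcal{O}[G]e_{\psi}$, since each summand $\mathcal{O}[N]e_{\psi^{h}}h$ is free of rank one via the conjugation isomorphism $\mathcal{O}[N]e_{\psi} \to \mathcal{O}[N]e_{\psi^{h}}$. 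The Morita isomorphism $R \simeq \End_{eRe}(Re)$ for a full idempotent $e$ then gives $\mathcal{O}[G]f_{\chi} \simeq M_{\chi(1)}(\mathcal{O})$ as claimed.

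For the record, I believe the cited source establishes this by a shorter, citation-heavy route: it reduces via Theorem \ref{thm:frob-kernel}(iv) to a general criterion (in terms of $|G|/\chi(1)$ being a $p$-adic unit and the character field being unramified) for a Wedderburn component $\Z_{p}[G]e_{\chi}$ to be maximal, which is ultimately a conductor-formula argument in the spirit of Jacobinski. Your proof has the advantage of being entirely explicit, producing the concrete isomorphism $\mathcal{O}[G]f_{\chi} \simeq M_{\chi(1)}(\mathcal{O})$ rather than invoking a black-box criterion; the cited route is shorter but relies on more machinery. Both hinge on the same Frobenius input, namely that every $\chi$ with $N \not\leq \ker\chi$ is induced from $N$.
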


For $n \in \N$, let $C_{n}$ denote the cyclic group of order $n$, let $A_{n}$ denote the alternating group on $n$ letters and let $S_{n}$ denote the symmetric group on $n$ letters.
Let $V_{4}$ denote the subgroup of $A_{4}$ generated by double transpositions. 
We now recall two examples from \cite[\S 2.3]{MR3461042} (also see \cite[\S 2.2]{MR3749195}).

\begin{example}\label{ex:metacyclic}
Let $p<q$ be distinct primes and assume that $p \mid (q-1)$.
Then there is an embedding $C_{p} \hookrightarrow \Aut(C_{q})$ and so there is
a fixed-point-free action of $C_{p}$ on $C_{q}$.
Hence the corresponding semidirect product $G = C_{q} \rtimes C_{p}$ is a Frobenius group
(see \cite[Theorem 2.12]{MR3461042} or \cite[\S 4.6]{MR2599132}, for example), and so $\Z_{p}[G]$ is $N$-hybrid with $N = C_{q}$.
\end{example}

\begin{example}\label{ex:affine}
Let $q$ be a prime power and let $\F_{q}$ be the finite field with $q$ elements.
The group $\Aff(q)$ of affine transformations on $\F_{q}$ is the group of transformations
of the form $x \mapsto ax +b$ with $a \in \F_{q}^{\times}$ and $b \in \F_{q}$.
Let $G=\Aff(q)$ and let $N=\{ x \mapsto x+b \mid b \in \F_{q} \}$.
Then $G$ is a Frobenius group with Frobenius kernel $N \simeq \F_{q}$ and is isomorphic to
the semidirect product $\F_{q} \rtimes \F_{q}^{\times}$ with the natural action.
Hence for every prime $p$ not dividing $q$, we have that $\Z_{p}[G]$ is $N$-hybrid.
Note that in particular $\Aff(3) \simeq S_{3}$ and $\Aff(4) \simeq A_{4}$.
\end{example}

\begin{lemma}\label{lem:monomial-Frobenius}
Let $G = N \rtimes H$ be a Frobenius group.
Then $G$ is monomial if and only if its Frobenius complement $H$ is monomial.
In particular, if $H$ is supersoluble or metabelian then $G$ is monomial.
\end{lemma}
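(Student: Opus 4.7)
The plan is to use the Frobenius structure together with Theorem~\ref{thm:frob-kernel} to split $\Irr_{\C}(G)$ into the characters whose kernel contains $N$ and those whose kernel does not, and to treat each class separately.

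For the implication that $H$ monomial implies $G$ monomial, I would first note that $N$ is nilpotent by Theorem~\ref{thm:frob-kernel}(ii), hence supersoluble, hence monomial. Given $\chi \in \Irr_{\C}(G)$, if $N \not\leq \ker\chi$ then Theorem~\ref{thm:frob-kernel}(iv) yields $\chi = \ind_{N}^{G}(\psi)$ for some $\psi \in \Irr_{\C}(N)$; writing $\psi = \ind_{M}^{N}(\mu)$ with $\mu$ linear by monomiality of $N$ and invoking transitivity of induction gives $\chi = \ind_{M}^{G}(\mu)$. If instead $N \leq \ker\chi$, then $\chi$ is the inflation of some $\bar\chi \in \Irr_{\C}(G/N) \simeq \Irr_{\C}(H)$; choosing a monomial presentation $\bar\chi = \ind_{\bar J}^{H}(\bar\lambda)$, lifting $\bar J$ to its preimage $J \leq G$ under $G \twoheadrightarrow G/N$ and $\bar\lambda$ to its inflation $\lambda$ on $J$, and using the compatibility of induction with inflation, yields $\chi = \ind_{J}^{G}(\lambda)$.

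For the converse, I would take $\chi \in \Irr_{\C}(H)$, inflate to $\tilde\chi \in \Irr_{\C}(G)$ (so that $N \leq \ker\tilde\chi$), and write $\tilde\chi = \ind_{J}^{G}(\lambda)$ with $\lambda : J \to \C^{\times}$ linear. The key observation is that the standard induced-character formula together with a triangle-inequality argument identifies $\ker(\ind_{J}^{G}\lambda)$ with the normal core $\bigcap_{x \in G} x^{-1}(\ker\lambda) x$ of $\ker\lambda$ in $G$. Hence $N \leq \ker\lambda \leq J$, so $\lambda$ descends to a linear character $\bar\lambda$ of $J/N$, and compatibility of induction with the quotient $G \twoheadrightarrow G/N \simeq H$ then gives $\chi = \ind_{J/N}^{H}(\bar\lambda)$, exhibiting $\chi$ as monomial.

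The ``in particular'' clause follows immediately from the first direction together with the fact, recalled in the introduction, that every supersoluble or metabelian finite group is monomial. The main point to handle with care is the descent step in the converse direction, where the identification of $\ker(\ind_{J}^{G}\lambda)$ with the normal core of $\ker\lambda$ in $G$ is what forces both $N \leq J$ and $\lambda|_{N} = 1$; the remaining reductions are routine manipulations with induction, inflation, and quotients.
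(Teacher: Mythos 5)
Your proof is correct and follows essentially the same approach as the paper: the forward direction (splitting $\Irr_{\C}(G)$ according to whether $N$ lies in the kernel, using Theorem~\ref{thm:frob-kernel}(ii) and (iv), nilpotency of $N$, and transitivity of induction plus inflation--induction compatibility) is identical in structure. The only difference is in the converse, where the paper simply cites that any quotient of a monomial group is monomial, whereas you supply a direct proof via the identification of $\ker(\ind_{J}^{G}\lambda)$ with the normal core of $\ker\lambda$ in $G$; that identification and the descent step are both correct, so this is a self-contained substitute for the citation rather than a genuinely different route.
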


\begin{proof}
Suppose $H$ is monomial.
Let $\chi \in \Irr_{\C}(G)$.
If $N \leq \ker \chi$ then $\chi$ is inflated from some $\varphi \in \Irr_{\C}(G/N)$.
Otherwise $N \nleq \ker \chi$ and so $\chi$ is induced
from some $\psi \in \Irr_{\C}(N)$ by Theorem \ref{thm:frob-kernel} (iv).
The Frobenius complement $H \simeq G/N$ is monomial by assumption.
Moreover, the Frobenius kernel $N$ is nilpotent by Theorem \ref{thm:frob-kernel} (ii) and thus is monomial 
by \cite[Theorem 11.3]{MR632548}.
However, induction is transitive and inflation commutes with induction (as in \cite[Theorem 4.2 (3)]{MR1984740}, for example)
so that in both cases $\chi$ is induced from a linear character. Therefore $G$ is monomial.
The converse follows from the fact that any quotient of a monomial group is monomial
(this can easily be proved using that inflation commutes with induction; also see \cite[Chapter 2, \S 4]{MR655785}).
The last claim follows since $H$ is monomial in these cases by \cite[\S 4.4, Theorem 4.8 (1)]{MR1984740}).
\end{proof}

\subsection{New $p$-adic hybrid group rings from old}
We recall two results from \cite[\S 2]{MR3749195}.

\begin{prop}[{\cite[Proposition 2.14]{MR3749195}}] \label{prop:hybrid-basechange-down}
Let $G$ be a finite group with normal subgroups $N, H \unlhd  G$.
Let $N'$ be a normal subgroup of $H$ such that $N' \leq N$. Let $p$ be a prime.
If  $\Z_{p}[G]$ is $N$-hybrid then $\Z_{p}[H]$ is $N'$-hybrid.
\end{prop}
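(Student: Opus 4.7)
The plan is to proceed in three stages; the first and third are routine manipulations with orthogonal central idempotents, while the second contains the real content.

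\textbf{Stage 1 (easy reduction).} Condition (i) that $p \nmid |N'|$ is immediate, since $N' \leq N$ and the $N$-hybrid hypothesis on $\Z_{p}[G]$ gives $p \nmid |N|$. I will set $M := N \cap H$. As both $N$ and $H$ are normal in $G$, so is $M$; moreover $M \leq N$, and $N' \leq M$ (because $N' \leq N$ and $N' \leq H$, the latter as $N' \unlhd H$). I will first strengthen the hypothesis to: $\Z_{p}[G]$ is $M$-hybrid. Since $M \leq N$ one has $e_{M} e_{N} = e_{N}$ in $\zeta(\Z_{p}[G])$, so $1 - e_{M}$ and $e_{M} - e_{N}$ are orthogonal central idempotents summing to $1 - e_{N}$, which yields the ring decomposition
\[
\Z_{p}[G](1 - e_{N}) = \Z_{p}[G](1 - e_{M}) \oplus \Z_{p}[G](e_{M} - e_{N}).
\]
A direct summand of a maximal order is maximal in its Wedderburn component, so $\Z_{p}[G](1-e_{M})$ is maximal, i.e.\ $\Z_{p}[G]$ is $M$-hybrid.

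\textbf{Stage 2 (key step).} I will prove that $\Z_{p}[H]$ is $M$-hybrid, where now $M \leq H$, $M \unlhd G$ and $H \unlhd G$. Equivalently, for every $\Q_{p}$-irreducible character $\psi$ of $H$ with $M \not\leq \ker \psi$, I need the Wedderburn component $\Z_{p}[H]e_{\psi}$ to be a maximal $\Z_{p}$-order in $\Q_{p}[H]e_{\psi}$. By Clifford theory applied to $H \unlhd G$, such a $\psi$ appears in the restriction to $H$ of some $\Q_{p}$-irreducible character $\chi$ of $G$; and $\chi$ must satisfy $M \not\leq \ker \chi$, since otherwise $M \leq \ker(\res^{G}_{H} \chi) \subseteq \ker \psi$, contradicting the choice of $\psi$. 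Hence by the $M$-hybrid property of $\Z_{p}[G]$ (established in Stage 1), the block $\Z_{p}[G]e_{\chi}$ is a maximal $\Z_{p}$-order in $\Q_{p}[G]e_{\chi}$. The Clifford description realises $\Q_{p}[G]e_{\chi}$, up to Morita equivalence, as a crossed product of $\Q_{p}[H]e_{\psi}$ by the quotient $T_{G}(\psi)/H$, and a careful integral version of this identification, together with the Morita invariance of maximality and the coprimality $p \nmid |M|$, should descend the maximality of $\Z_{p}[G]e_{\chi}$ to that of $\Z_{p}[H]e_{\psi}$. This Clifford-theoretic descent at the integral level is the main obstacle in the proof; the rest is essentially bookkeeping.

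\textbf{Stage 3 (easy conclusion).} Once $\Z_{p}[H]$ is known to be $M$-hybrid, I will conclude by the same idempotent argument as in Stage 1. Since $N' \leq M$ with both $N'$ and $M$ normal in $H$, one has $e_{N'} e_{M} = e_{M}$, and so $1 - e_{M} = (1 - e_{N'}) + (e_{N'} - e_{M})$ is an orthogonal decomposition of central idempotents of $\Z_{p}[H]$, giving
\[
\Z_{p}[H](1 - e_{M}) = \Z_{p}[H](1 - e_{N'}) \oplus \Z_{p}[H](e_{N'} - e_{M}).
\]
As the left-hand side is maximal by Stage 2, the summand $\Z_{p}[H](1 - e_{N'})$ is maximal, which is exactly condition (ii) for $\Z_{p}[H]$ to be $N'$-hybrid.
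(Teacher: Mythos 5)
Stages 1 and 3 are correct: the orthogonal decompositions of idempotents $1-e_{N} = (1-e_{M}) + (e_{M} - e_{N})$ and $1-e_{M} = (1-e_{N'}) + (e_{N'} - e_{M})$ are valid because $p \nmid |N|$, and a direct factor of a maximal order is maximal. Stage 2, by contrast, contains no proof: the sentence beginning ``a careful integral version of this identification \dots\ should descend the maximality'' is exactly where the content of the proposition lies, and you do not supply it. Two concrete problems. First, the coprimality $p \nmid |M|$ that you invoke is irrelevant to this step: the group appearing in the Clifford/Fong--Reynolds crossed product is $T_{G}(\psi)/H$, whose order may well be divisible by $p$, so no separability argument is available. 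Second, and more fundamentally, passing maximality \emph{down} from a crossed product $\Lambda * Q$ to its coefficient ring $\Lambda$ is false for general $\Z_{p}$-orders, so some feature particular to group rings must enter, and you do not identify it.

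The missing ingredient is a theorem from block theory (equivalently, from the theory of hereditary orders). One route: since $1-e_{M}$ is a central idempotent of both $\Z_{p}[H]$ and $\Z_{p}[G]$, every block of $\Z_{p}[H](1-e_{M})$ is covered, in the sense of Brauer, by a block of $\Z_{p}[G](1-e_{M})$; the latter all have defect zero because $\Z_{p}[G](1-e_{M})$ is a maximal order; and the standard result on covered blocks for $H \unlhd G$ (a defect group of the covered block has the form $D \cap H$ for a defect group $D$ of the covering block; see Kn\"{o}rr, or Nagao--Tsushima) then forces defect zero, hence maximality, for $\Z_{p}[H](1-e_{M})$. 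A second route: $\Z_{p}[G](1-e_{M})$ is a crossed product of $\Z_{p}[H](1-e_{M})$ by $G/H$, so every $\Z_{p}[H](1-e_{M})$-lattice is a direct summand of the restriction of an induced $\Z_{p}[G](1-e_{M})$-lattice, which is projective because $\Z_{p}[G](1-e_{M})$ is maximal; hence $\Z_{p}[H](1-e_{M})$ is hereditary, and a hereditary direct factor of a group ring is maximal (for blocks of $p$-adic group rings, hereditary is equivalent to defect zero). Either argument replaces the hand-waving in Stage 2 with a genuine theorem; without one, your Stage 2 is a restatement of the claim rather than a proof.
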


\begin{prop}[{\cite[Proposition 2.15]{MR3749195}}]  \label{prop:hybrid-basechange-up}
Let $G$ be a finite group with normal subgroups $N \unlhd H \unlhd G$ such that $N \unlhd G$.
Let $p$ be a prime and assume that $p \nmid [G:H]$.
Then $\Z_{p}[G]$ is $N$-hybrid if and only if $\Z_{p}[H]$ is $N$-hybrid.
\end{prop}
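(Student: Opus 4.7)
The proposition asserts an equivalence; the two directions have quite different flavors. The forward implication, that $\Z_p[G]$ being $N$-hybrid forces $\Z_p[H]$ to be $N$-hybrid, is immediate from Proposition~\ref{prop:hybrid-basechange-down} applied with $N' := N$: since $N \leq H$ and $N \unlhd G$, we have $N \unlhd H$, so all hypotheses of that proposition are satisfied. Observe that this direction does not use the assumption $p \nmid [G:H]$; the coprimality is only needed for the converse.

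For the converse, assume $\Z_p[H]$ is $N$-hybrid and $p \nmid [G:H]$. Condition (i) of the hybrid property transfers for free: $e_N \in \Z_p[H] \subseteq \Z_p[G]$. The crux is condition (ii). Write $\Lambda := \Z_p[H](1-e_N)$ (maximal by hypothesis) and $\Gamma := \Z_p[G](1-e_N)$; we must show that $\Gamma$ is maximal. Because $N \unlhd G$, the idempotent $e_N$ is central in $\Z_p[G]$, so for any set $T$ of left coset representatives of $H$ in $G$, the standard decomposition $\Z_p[G] = \bigoplus_{t \in T} t\,\Z_p[H]$ of right $\Z_p[H]$-modules restricts to
\[
\Gamma \;=\; \bigoplus_{t \in T} t\,\Lambda
\]
as a right $\Lambda$-module. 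This exhibits $\Gamma$ as a crossed product of the maximal order $\Lambda$ with the group $G/H$, whose order is invertible in $\Z_p$.

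To establish maximality of $\Gamma$, let $\mathfrak{M} \supseteq \Gamma$ be any $\Z_p$-order in $\Q_p[G](1-e_N)$. First, the intersection $\mathfrak{M} \cap \Q_p[H](1-e_N)$ is a $\Z_p$-order in $\Q_p[H](1-e_N)$ containing $\Lambda$, hence equals $\Lambda$ by maximality of the latter. Next, for $x \in \mathfrak{M}$, write $x = \sum_{t \in T} t\,y_t$ uniquely with $y_t \in \Q_p[H](1-e_N)$; each $y_t$ should be isolatable by a Reynolds-type averaging over $G/H$, using that $[G:H]$ is invertible in $\Z_p$ and that the coset representatives and their inverses all lie in $\Gamma \subseteq \mathfrak{M}$. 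This would force $y_t \in \mathfrak{M} \cap \Q_p[H](1-e_N) = \Lambda$ for every $t$, whence $\mathfrak{M} \subseteq \Gamma$.

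The main obstacle is making this averaging argument rigorous---producing an operator that both preserves $\mathfrak{M}$ and cleanly picks out a single coset component---and this is precisely where the coprimality hypothesis $p \nmid [G:H]$ is essential, via invertibility of $|G/H|$. An alternative route would bypass the explicit averaging by invoking a classical Auslander--Goldman-type theorem asserting that the crossed product of a maximal order by a finite group of order invertible in the base ring is again maximal.
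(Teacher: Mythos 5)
Your forward direction is correct and is exactly the intended route: apply Proposition~\ref{prop:hybrid-basechange-down} with $N'=N$, and you correctly observe that $p\nmid [G:H]$ is not needed here. The decomposition $\Gamma=\bigoplus_{t\in T}t\Lambda$ for the converse is also correct. The gap is in the step you flag yourself: showing that $\Gamma$ is maximal.

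The Reynolds-type averaging does not do what you want. The only averaging that obviously lands in $\mathfrak{M}$ is $x\mapsto \tfrac{1}{[G:H]}\sum_{t\in T}t^{-1}xt$, and if $x=\sum_t t\,y_t$ a short computation (using $H\unlhd G$) shows that its $\Q_p[H](1-e_N)$--component is $\tfrac{1}{[G:H]}\sum_t t^{-1}y_1t$, an average of conjugates of $y_1$, not $y_1$ itself. That average does lie in $\mathfrak{M}\cap\Q_p[H](1-e_N)=\Lambda$, but this gives no control over $y_1$ or the other $y_t$. More basically, the projection onto a single coset component is a $\Lambda$-bimodule map but not a $\Gamma$-bimodule map, so the argument ``$y_t\in\mathfrak{M}\cap\Q_p[H](1-e_N)$'' cannot be forced by bimodule-preserving operations alone. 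So even if one restricts to $y_1$ there is no obvious averaging operator that extracts it while preserving $\mathfrak{M}$.

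Your alternative of invoking a ``crossed product of a maximal order by a group of invertible order is maximal'' theorem is the right instinct, but this is not an off-the-shelf citation, and proving it requires genuine work. In particular, one would need to establish $\rad(\Gamma)=\rad(\Lambda)\Gamma$ via a Maschke-type argument using $p\nmid[G:H]$, that $\Gamma$ is hereditary, and that $\Gamma e/\rad(\Gamma e)$ is simple for each central primitive idempotent $e$ of $\Q_p[G](1-e_N)$; this last point is delicate because the simple components of $\Q_p[G](1-e_N)$ are governed by Clifford theory over $\Q_p[H](1-e_N)$, and one cannot simply quote Auslander--Goldman (maximal $\Z_p$-orders in ramified simple algebras are \emph{not} separable, so separability of $\Gamma$ over $\Lambda$ does not bootstrap to separability over $\Z_p$). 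For comparison, the actual proof in the cited reference goes through a character-theoretic criterion: $\Z_p[G]e_\chi$ is maximal if and only if a $p$-adic valuation condition on $\chi(1)$ holds, and Clifford theory together with $p\nmid[G:H]$ (which bounds both the inertia index $[G:I_G(\psi)]$ and the ramification multiplicity away from $p$) transfers that condition between characters of $H$ and of $G$. Your crossed-product route could in principle be completed, but as written there is a real gap.
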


\begin{example}\label{ex:S4-A4-V4}
Let $p=3$, $G=S_{4}$, $H=A_{4}$ and $N = V_{4}$. 
Then the hypotheses of Proposition \ref{prop:hybrid-basechange-up} are satisfied.
Hence $\Z_{3}[S_{4}]$ is $V_{4}$-hybrid if and only if $\Z_{3}[A_{4}]$ is $V_{4}$-hybrid.
In fact, $\Z_{3}[A_{4}]$ is indeed $V_{4}$-hybrid since $A_{4}$ is a Frobenius group with Frobenius kernel $V_{4}$
(see Example \ref{ex:affine}) and so $\Z_{3}[S_{4}]$ is also $V_{4}$-hybrid.
However, $S_{4}$ is \emph{not} a Frobenius group (see \cite[Example 2.18]{MR3461042}).
Thus Proposition \ref{prop:hybrid-basechange-up} can be used to give examples which do not come directly from
Proposition \ref{prop:frob-N-hybrid}.
\end{example}

\begin{remark}\label{rmk:hybrid-Galois-basechange}
One of the main reasons for interest in Propositions \ref{prop:hybrid-basechange-down} and \ref{prop:hybrid-basechange-up}
comes from base change in Galois theory.
Assume that $L/K$ is a finite Galois extension with Galois group $G$.
Let $p$ be a prime and suppose that $\Z_{p}[G]$ is $N$-hybrid for some normal subgroup $N$ of $G$.
Put $F := L^{N}$.
Now let $K'/K$ be a finite abelian extension of $K$ and put $F' = FK'$ and $L' = LK'$.
Then $H := \Gal(L'/K')$ naturally identifies with a subgroup of $G$.
Similarly, $N' := \Gal(L'/F')$ is normal in $H$ and identifies with a subgroup of $N$.
The fixed field $L^{H}$ is a subfield of $K'$ and thus $L^{H}/K$ is a Galois extension, since $K'/K$ is abelian.
Hence $H$ is normal in $G$ and we conclude by Proposition \ref{prop:hybrid-basechange-down}
that $\Z_{p}[H]$ is $N'$-hybrid. Finally, let $G' := \Gal(L'/K)$; if $p \nmid [K':K]$ then
 $\Z_{p}[G']$ is also $N'$-hybrid by Proposition \ref{prop:hybrid-basechange-up}.
The situation is illustrated by the following field diagram.
\[
\xymatrix@1@!0@W=14pt@H=14pt{
& & L'  \ar@{-}[dll] \ar@{-}[dd]_{N'} \ar@/^1pc/@{-}[dddd]^{H} \ar@{-}@/_5pc/[dddddll] _{G'} \\
L \ar@{-}[dd]^{N}  \ar@/_1pc/@{-}[dddd]_{G} & &   \\
& & F' \ar@{-}[dll] \ar@{-}[dd]  \\
F \ar@{-}[dd] & & \\
& & K' \ar@{-}[dll] \\
K & &
}
\]
%
\end{remark}

\section{Unconditional results}\label{sec:unconditional}

\subsection{Extensions of degree coprime to $p$}
We first consider a straightforward case.

\begin{theorem}\label{thm:BS-p-nmid-G}
Let $L/K$ be a finite Galois CM-extension of number fields.
Let $p$ be an odd prime and 
let $S$ be a finite set of places of $K$ such that $S_{p} \cup S_{\ram}(L/K) \cup S_{\infty} \subseteq S$.
If $\Gal(L^{+}/K)$ is monomial and $p \nmid [L^{+}:K]$ then both $BS(L/K,S,p)$ and $B(L/K,S,p)$ are true. 
\end{theorem}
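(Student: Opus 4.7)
The plan is to deduce this result from Corollary \ref{cor:BS-monomial} by verifying the EIMC for the admissible one-dimensional $p$-adic Lie extension $\mathcal{L}/K$ where $\mathcal{L} := L(\zeta_p)^{+}_{\infty}$. Since $\Gal(L^{+}/K)$ is already assumed to be monomial, Corollary \ref{cor:BS-monomial} will yield the result provided this EIMC holds.

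To establish the EIMC, I would invoke Theorem \ref{thm:EIMC-p-does-not-divide-order-of-H}: it suffices to show that $p \nmid |H|$ where $H := \Gal(\mathcal{L}/K_{\infty})$. Since the cyclotomic $\Z_{p}$-extension of any number field $F$ is obtained as $F \cdot \Q_{\infty}$, we have
\[
\mathcal{L} = L(\zeta_{p})^{+} \cdot \Q_{\infty} = L(\zeta_{p})^{+} \cdot K_{\infty},
\]
so $H$ is naturally a quotient of $\Gal(L(\zeta_{p})^{+}/K)$. In particular $|H|$ divides $[L(\zeta_{p})^{+}:K]$.

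The remaining step, which is entirely straightforward, is a degree count. Writing $L(\zeta_{p})^{+} = L^{+} \cdot K(\zeta_{p})^{+}$ (the compositum of totally real fields, still totally real and containing complex conjugation's fixed field in $L(\zeta_{p})$), we see that
\[
[L(\zeta_{p})^{+} : K] \,\,\Big|\,\, [L^{+}:K] \cdot [K(\zeta_{p})^{+}:K].
\]
The first factor is coprime to $p$ by hypothesis; the second divides $[\Q(\zeta_{p})^{+}:\Q] = (p-1)/2$, which is also coprime to $p$. Hence $p \nmid |H|$, the EIMC holds for $\mathcal{L}/K$ by Theorem \ref{thm:EIMC-p-does-not-divide-order-of-H}, and Corollary \ref{cor:BS-monomial} now yields both $BS(L/K,S,p)$ and $B(L/K,S,p)$. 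There is no real obstacle here: the theorem is essentially an immediate combination of the previously established Corollary \ref{cor:BS-monomial} with the ``small $H$'' case of the EIMC recorded in Theorem \ref{thm:EIMC-p-does-not-divide-order-of-H}, once one observes that the hypothesis $p \nmid [L^{+}:K]$ forces $p$ not to divide the order of the non-central part of the relevant Iwasawa Galois group.
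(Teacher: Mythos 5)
Your proposal is correct and follows essentially the same route as the paper's proof: show that $p \nmid [L(\zeta_p)^+:K]$, deduce $p \nmid |H|$ for the relevant Iwasawa Galois group, apply Theorem \ref{thm:EIMC-p-does-not-divide-order-of-H} to get the EIMC for $L(\zeta_p)^+_\infty/K$, and conclude via Corollary \ref{cor:BS-monomial}. The only cosmetic difference is that you bound $[L(\zeta_p)^+:K]$ using the compositum $L^+ \cdot K(\zeta_p)^+$, whereas the paper uses the tower $K \subset L^+ \subset L(\zeta_p)^+$, observing that $[L(\zeta_p)^+:L^+]$ divides $p-1$; both are trivial degree arguments establishing the same coprimality.
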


\begin{proof}
Since $[L^{+}:K]$ and $[L(\zeta_p)^{+}:L^{+}]$ are both coprime to $p$, their product $[L(\zeta_p)^{+}:K]$ is also coprime to $p$.
Hence by Theorem \ref{thm:EIMC-p-does-not-divide-order-of-H} the EIMC holds for $L(\zeta_p)^{+}_{\infty} / K$, and so 
the desired result follows from Corollary \ref{cor:BS-monomial}.
\end{proof}

\subsection{Further unconditional cases of the EIMC}
We now apply the results of \cite{MR3749195} to give criteria for the EIMC to hold unconditionally in cases of interest to us.

\begin{theorem}\label{thm:pth-root-base-change}
Let $L/K$ be a finite Galois extension of totally real number fields with Galois group $G$.
Let $p$ be an odd prime and let $L_{\infty}$ be the cyclotomic $\Z_{p}$-extension of $L$.
Let $N$ be a normal subgroup of $G$ and let $\overline{P}$ be a Sylow $p$-subgroup of $\overline{G}:=\Gal(L^{N}/K) \simeq G/N$.
Suppose that $\Z_{p}[G]$ is $N$-hybrid and that $(L^{N})^{\overline{P}}/\Q$ is abelian.
Let $K'/K$ be a field extension such that $K'$ is totally real, $K'/\Q$ is abelian and $p \nmid [K':K] < \infty$.
Let $L'_{\infty}=L_{\infty}K'$.
Then the EIMC holds for both $L_{\infty}/K$ and $L_{\infty}'/K$.
\end{theorem}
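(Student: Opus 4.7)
The plan is to apply the hybrid strategy developed in \cite{hybrid-EIMC}. Setting $\mathcal{G} = \Gal(L_\infty/K)$, the hypothesis that $\Z_p[G]$ is $N$-hybrid lifts $e_N$ to a central idempotent of $\Lambda(\mathcal{G})$, giving a decomposition $\Lambda(\mathcal{G}) = \Lambda(\mathcal{G})e_N \oplus \Lambda(\mathcal{G})(1-e_N)$ under which both the canonical complex $C_S^{\bullet}(L_\infty/K)$ and the reduced-norm value $\Phi_S \in \zeta(\mathcal{Q}(\mathcal{G}))^{\times}$ split compatibly. Verifying the EIMC for $L_\infty/K$ therefore reduces to checking it on each of the two summands separately.

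The $e_N$-summand identifies canonically with $\Lambda(\Gal(L^N_\infty/K))$, so the restriction of the EIMC for $L_\infty/K$ to this summand is exactly the EIMC for $L^N_\infty/K$. Since $\Gal(K_\infty/K)$ is pro-$p$ and $p \nmid [(L^N)^{\overline P}:K]$, a Sylow pro-$p$ subgroup of $\Gal(L^N_\infty/K)$ has fixed field precisely $(L^N)^{\overline P}$. The assumption that this field is abelian over $\Q$ together with Ferrero-Washington forces $\mu = 0$ for $L^N_\infty/K$, and the corollary stated immediately after Theorem \ref{thm:EIMC-with-mu} then yields the EIMC for $L^N_\infty/K$. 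On the $(1-e_N)$-summand, the $N$-hybrid condition guarantees that $\Z_p[G](1-e_N)$ is a maximal $\Z_p$-order; the EIMC restricted to this piece is proved unconditionally in \cite{hybrid-EIMC} via a refinement of the Ritter-Weiss argument underlying Theorem \ref{thm:EIMC-p-does-not-divide-order-of-H}, with no $\mu=0$ hypothesis required.

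For the extension $L_\infty'/K$, it suffices to verify the hypotheses of the theorem for the finite extension $L'/K$ with Galois group $G' = \Gal(L'/K)$. By Remark \ref{rmk:hybrid-Galois-basechange}, the assumptions that $K'/\Q$ is abelian and $p \nmid [K':K]$ imply that $\Z_p[G']$ is $N'$-hybrid, where $N' := \Gal(L'/L^N K')$. The Sylow $p$-subgroup of $G'/N' \simeq \Gal(L^N K'/K)$ has fixed field equal to the compositum $(L^N)^{\overline P} \cdot K'$: both $(L^N)^{\overline P}$ and $K'$ have prime-to-$p$ degree over $K$, while $L^N K'$ is a $p$-power extension of $(L^N)^{\overline P} K'$. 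As the compositum of two abelian extensions of $\Q$, this fixed field is abelian over $\Q$, so the first case applies to $L'_\infty/K$.

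The main potential obstacle is the careful verification that the hybrid decomposition at the Iwasawa-algebra level is genuinely compatible with both $C_S^{\bullet}(L_\infty/K)$ and $\Phi_S$, and that the $(1-e_N)$-summand indeed falls within the scope of the unconditional, maximal-order version of the EIMC proved in \cite{hybrid-EIMC}. Both statements are essentially the content of that reference; granting them, what remains is the elementary field-theoretic bookkeeping above.
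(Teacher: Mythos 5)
Your proposal is correct and takes essentially the same route as the paper: the paper simply invokes \cite[Theorem 4.17(v)]{hybrid-EIMC} for $L_\infty/K$, then checks via Remark~\ref{rmk:hybrid-Galois-basechange} and the field-theoretic identity $((L')^{N'})^{\overline{P}'}=(F')^{\overline{P}'}=(L^N)^{\overline{P}}K'$ that the hypotheses carry over to $L'/K$, and cites the same theorem again for $L'_\infty/K$. Your explanatory unpacking of the hybrid decomposition (the $e_N$-summand handled by $\mu=0$ via Ferrero--Washington and the corollary after Theorem~\ref{thm:EIMC-with-mu}, the $(1-e_N)$-summand handled by maximality of the order) accurately describes what the cited theorem does.
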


\begin{remark}\label{rmk:can-take-N-trivial}
It is straightforward to see that for every prime $p$ and every finite group $G$, the $p$-adic group ring $\Z_{p}[G]$ is $\{ 1 \}$-hybrid.
Hence, in particular, Theorem \ref{thm:pth-root-base-change} can be applied in the case that $N$ is trivial.
\end{remark}

\begin{remark}\label{rmk:base-change-in-particular}
The hypothesis that $(L^{N})^{\overline{P}}/\Q$ is abelian forces $K/\Q$ to be abelian, and thus one can
take $K'$ to be the compositum of $K$ with another finite abelian extension $K''/\Q$ such that $p \nmid [K'':\Q]$.
In particular, Theorem \ref{thm:pth-root-base-change} can be applied with $K'=K(\zeta_{p})^{+}$
and $L_{\infty}'=L_{\infty}(\zeta_{p})^{+}=L(\zeta_{p})^{+}_{\infty}$.
\end{remark}

\begin{proof}[Proof of Theorem \ref{thm:pth-root-base-change}]
The EIMC holds for $L_{\infty}/K$ by \cite[Theorem 4.17 (v)]{MR3749195}.
Let $F=L^{N}$ and put $F'=FK'$ and $L'=LK'$.
Let $G'=\Gal(L'/K)$ and $N'=\Gal(L'/F')$.
Then $\Z_{p}[G']$ is $N'$-hybrid by Remark \ref{rmk:hybrid-Galois-basechange}.
Let $\overline{P}'$ be a Sylow $p$-subgroup of $\overline{G'}:=G'/N'$.
Then
\[
((L')^{N'})^{\overline{P}'} = (F')^{\overline{P}'} = F^{\overline{P}} K'= (L^{N})^{\overline{P}}K',
\]
which is an abelian extension of $\Q$
as it is the compositum of two such extensions.
Therefore the EIMC holds for $L_{\infty}'/K$ by a further application of \cite[Theorem 4.17 (v)]{MR3749195}.
\end{proof}

\subsection{Unconditional results on the non-abelian Brumer--Stark conjecture}

\begin{theorem}\label{thm:unconditional-BS}
Let $L/K$ be a finite Galois CM-extension of number fields.
Let $N$ be a normal subgroup of $G:=\Gal(L^{+}/K)$ and let $F=(L^{+})^{N}$.
Let $p$ be an odd prime and let $\overline{P}$ be a Sylow $p$-subgroup of $\overline{G}:=\Gal(F/K) \simeq G/N$.
Suppose that $\Z_{p}[G]$ is $N$-hybrid, $G$ is monomial, and $F^{\overline{P}}/\Q$ is abelian.
Let $S$ be a finite set of places of $K$ such that $S_{p} \cup S_{\ram}(L/K) \cup S_{\infty} \subseteq S$.
Then both $BS(L/K,S,p)$ and $B(L/K,S,p)$ are true.
\end{theorem}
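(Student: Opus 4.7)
The plan is to derive Theorem \ref{thm:unconditional-BS} as a direct consequence of Corollary \ref{cor:BS-monomial} combined with Theorem \ref{thm:pth-root-base-change}. Since $\Gal(L^{+}/K) = G$ is monomial by hypothesis, Corollary \ref{cor:BS-monomial} reduces the task entirely to verifying that the EIMC holds for $L(\zeta_{p})^{+}_{\infty}/K$; once this is done, both $BS(L/K,S,p)$ and $B(L/K,S,p)$ follow immediately.

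To produce the required instance of the EIMC, I would apply Theorem \ref{thm:pth-root-base-change} to the totally real Galois extension $L^{+}/K$, taking the normal subgroup $N \unlhd G$ as given in the statement and choosing the auxiliary field extension $K' := K(\zeta_{p})^{+}$. The standing hypotheses transfer directly: $\Z_{p}[G]$ is $N$-hybrid by assumption, $F = (L^{+})^{N}$, and $F^{\overline{P}}/\Q$ is abelian by assumption. It remains to check the conditions imposed on $K'$. The field $K'$ is clearly totally real, and $[K':K]$ divides $[\Q(\zeta_{p})^{+}:\Q] = (p-1)/2$, which is coprime to the odd prime $p$, so $p \nmid [K':K] < \infty$.

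The remaining point is that $K'/\Q$ is abelian, and here I would invoke Remark \ref{rmk:base-change-in-particular}: the hypothesis that $F^{\overline{P}}/\Q$ is abelian forces $K/\Q$ to be abelian, since $K$ is a subfield of $F^{\overline{P}}$. Therefore $K' = K(\zeta_{p})^{+}$ is the compositum of the abelian extensions $K/\Q$ and $\Q(\zeta_{p})^{+}/\Q$, hence is itself abelian over $\Q$. All hypotheses of Theorem \ref{thm:pth-root-base-change} are thus satisfied, and it yields the EIMC for $L^{+}_{\infty} K' / K$.

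The identification $L^{+}_{\infty} K(\zeta_{p})^{+} = L(\zeta_{p})^{+}_{\infty}$ is precisely the content of Remark \ref{rmk:base-change-in-particular}, so Theorem \ref{thm:pth-root-base-change} produces exactly the EIMC instance required by Corollary \ref{cor:BS-monomial}, completing the proof. There is no real obstacle here: the substantive work is absorbed into the main theorem (Theorem \ref{thm:EIMC-implies-BS}) and the hybrid-order unconditional EIMC results of \cite{hybrid-EIMC} packaged as Theorem \ref{thm:pth-root-base-change}; the present proof amounts to a careful bookkeeping exercise confirming that the choice $K' = K(\zeta_{p})^{+}$ satisfies all the required conditions.
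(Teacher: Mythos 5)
Your proof is correct and follows exactly the route taken in the paper: apply Theorem \ref{thm:pth-root-base-change} with $K' = K(\zeta_p)^+$ (invoking Remark \ref{rmk:base-change-in-particular}) to obtain the EIMC for $L(\zeta_p)^+_\infty/K$, and then conclude via Corollary \ref{cor:BS-monomial}. The additional verifications you spell out (that $K'/\Q$ is abelian, that $p \nmid [K':K]$, and the identification $L^+_\infty K(\zeta_p)^+ = L(\zeta_p)^+_\infty$) are precisely the bookkeeping that the paper delegates to Remark \ref{rmk:base-change-in-particular}.
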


\begin{remark}\label{rmk:N-trivial-in-BS}
In particular, Theorem \ref{thm:unconditional-BS} can be applied in the case that $N$ is trivial and $F=L^{+}$ (see Remark \ref{rmk:can-take-N-trivial}).
\end{remark}

\begin{proof}[Proof of Theorem \ref{thm:unconditional-BS}]
Applying Theorem \ref{thm:pth-root-base-change} with $K'=K(\zeta_{p})^{+}$ gives the EIMC for $L(\zeta_{p})^{+}_{\infty}/K$.
Hence the desired result follows from Corollary \ref{cor:BS-monomial}.
\end{proof}

\begin{corollary}\label{cor:BS-Frob}
Let $L/K$ be a finite Galois CM-extension of number fields and let $G=\Gal(L^{+}/K)$.
Suppose that $G = U \rtimes V$ is a Frobenius group with Frobenius kernel $U$ and abelian Frobenius complement $V$.
Further suppose that $(L^{+})^{U}/\Q$ is abelian (in particular, this is the case when $K=\Q$).
Let $p$ be an odd prime and let $S$ be a finite set of places of $K$ such that $S_{p} \cup S_{\ram}(L/K) \cup S_{\infty} \subseteq S$.
Suppose that either $p \nmid |U|$ or $U$ is a $p$-group (in particular, this is the case if $U$ is an $\ell$-group for any prime $\ell$.)
Then both $BS(L/K,S,p)$ and $B(L/K,S,p)$ are true.
\end{corollary}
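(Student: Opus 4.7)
The plan is to deduce Corollary \ref{cor:BS-Frob} by applying Theorem \ref{thm:unconditional-BS}, selecting a suitable normal subgroup $N$ of $G$ according to which of the two hypotheses on $U$ holds. The first observation is that $G$ is monomial: since $V$ is abelian, Lemma \ref{lem:monomial-Frobenius} applied to the Frobenius group $G = U \rtimes V$ shows that $G$ is monomial. It therefore remains to produce, in each case, a normal subgroup $N \unlhd G$ such that $\Z_p[G]$ is $N$-hybrid and $F^{\overline{P}}/\Q$ is abelian, where $F = (L^+)^N$ and $\overline{P}$ is a Sylow $p$-subgroup of $G/N$.

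Suppose first that $p \nmid |U|$. Then I would take $N = U$; Proposition \ref{prop:frob-N-hybrid} gives that $\Z_p[G]$ is $U$-hybrid. In this case $F = (L^+)^U$ and $\overline{G} = G/U \simeq V$, so $F^{\overline{P}}$ is a subfield of $F$. Since $F/\Q$ is abelian by hypothesis, so is $F^{\overline{P}}/\Q$.

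Suppose instead that $U$ is a $p$-group. Then I would take $N = 1$, which gives a trivially $\{1\}$-hybrid group ring (Remark \ref{rmk:can-take-N-trivial}). Now $F = L^+$ and $\overline{G} = G$. By Theorem \ref{thm:frob-kernel}(i) we have $\gcd(|U|,|V|) = 1$, whence $p \nmid |V|$; since $U$ is a normal $p$-subgroup of $G$ of order equal to the full $p$-part of $|G|$, it is the unique Sylow $p$-subgroup of $G$. Taking $\overline{P} = U$ yields $F^{\overline{P}} = (L^+)^U$, which is abelian over $\Q$ by hypothesis. The parenthetical case $K = \Q$ is automatic because then $\Gal((L^+)^U/\Q) = V$ is abelian; the parenthetical case of $U$ an $\ell$-group reduces to Case~1 if $\ell \neq p$ and to Case~2 if $\ell = p$.

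In either case the hypotheses of Theorem \ref{thm:unconditional-BS} are satisfied, so both $BS(L/K,S,p)$ and $B(L/K,S,p)$ hold. There is no real obstacle here: the content of the argument is the case split on $p$ versus $|U|$, together with the observation that in the $p$-group case the Frobenius kernel $U$ is itself the relevant Sylow; all the serious work has been done in Theorem \ref{thm:unconditional-BS} and its constituent ingredients.
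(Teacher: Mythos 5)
Your proof is correct and follows essentially the same route as the paper: monomiality from Lemma \ref{lem:monomial-Frobenius}, then the case split $N=U$ versus $N=\{1\}$ with $\overline{P}=U$, feeding into Theorem \ref{thm:unconditional-BS}. You add a welcome line of justification that the paper leaves implicit (that in the $p$-group case $U$ is the unique Sylow $p$-subgroup of $G$ because $\gcd(|U|,|V|)=1$), but the argument is the same.
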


\begin{proof}
First note that $G$ is monomial by Lemma \ref{lem:monomial-Frobenius} since $V$ is abelian.
Suppose that $p \nmid |U|$. Let $N=U$ and $F=(L^{+})^{N}$.
Then $\Z_{p}[G]$ is $N$-hybrid by Proposition \ref{prop:frob-N-hybrid}.
Hence the desired result follows from Theorem \ref{thm:unconditional-BS} in this case since $F/\Q$ is abelian, 
which forces $F^{\overline{P}}/\Q$ to be abelian.
Suppose that $U$ is a $p$-group. Taking $N=\{1\}$ and $F=L^{+}$ (see Remark \ref{rmk:N-trivial-in-BS}) we apply
Theorem \ref{thm:unconditional-BS} with $\overline{G}=G$ and $\overline{P}=U$ to obtain the desired result. 
\end{proof}

\begin{example}
In particular, $U$ is an $\ell$-group in Corollary \ref{cor:BS-Frob} in the following cases:
\begin{itemize}
\item $G \simeq \Aff(q)$, where $q$ is a prime power (see Example \ref{ex:affine}),
\item $G \simeq C_{\ell} \rtimes C_{q}$, where $q<\ell$ are distinct primes such that $q \mid (\ell-1)$ and $C_{q}$ acts on $C_{\ell}$ via an embedding $C_{q} \hookrightarrow \Aut(C_{\ell})$ (see Example \ref{ex:metacyclic}),
\item $G$ is isomorphic to any of the Frobenius groups constructed in \cite[Example 2.11]{MR3749195}.
\end{itemize} 
\end{example}

\begin{corollary} \label{cor:BS-holds}
Let $L/K$ be a finite Galois CM-extension of number fields.
Let $p$ be an odd prime and let $S$ be a finite set of places of $K$ such that $S_{p} \cup S_{\ram}(L/K) \cup S_{\infty} \subseteq S$.
Then both $BS(L/K,S,p)$ and $B(L/K,S,p)$ are true when  $L^{+}/K$ is any of the extensions considered in 
\cite[Examples 4.21, 4.22 or 4.23]{MR3749195}.
\end{corollary}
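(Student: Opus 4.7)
The plan is to mimic the structure of the proofs of Theorem \ref{thm:unconditional-BS} and Corollary \ref{cor:BS-Frob}, reducing the statement to an application of Corollary \ref{cor:BS-monomial}. Concretely, for each extension $L^{+}/K$ arising from \cite[Examples 4.21, 4.22 or 4.23]{hybrid-EIMC}, I need to verify two things: that $G := \Gal(L^{+}/K)$ is monomial, and that the EIMC holds for the cyclotomic $\Z_{p}$-extension $L(\zeta_{p})^{+}_{\infty}/K$. Once both are in hand, Corollary \ref{cor:BS-monomial} immediately yields $BS(L/K,S,p)$ and $B(L/K,S,p)$.

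First I would inspect the three examples from \cite{hybrid-EIMC}. In each case the group $G$ is built from a Frobenius-type construction (of the form $U \rtimes V$ with an explicit abelian or supersoluble complement $V$), together with a hybrid decomposition $\Z_{p}[G]$ that is $N$-hybrid for an appropriate normal subgroup $N \unlhd G$, with $(L^{+})^{N}$ sitting in a suitable abelian-over-$\Q$ subfield. Monomiality of $G$ then follows from Lemma \ref{lem:monomial-Frobenius} (applied to Frobenius quotients with abelian or metabelian/supersoluble complement) combined with the fact from \cite[\S 4.4, Theorem 4.8 (1)]{MR1984740} that every metabelian or supersoluble finite group is monomial; more generally, one inflates and induces characters through the normal series provided by the hybrid structure and invokes transitivity of induction together with \cite[Theorem 11.3]{MR632548} to cover the nilpotent pieces.

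Next I would establish the EIMC for $L(\zeta_{p})^{+}_{\infty}/K$ by base change. The examples of \cite{hybrid-EIMC} guarantee the hypotheses of Theorem \ref{thm:pth-root-base-change}: namely, $\Z_{p}[G]$ is $N$-hybrid, and $(L^{+})^{N}$ has the property that $((L^{+})^{N})^{\overline{P}}/\Q$ is abelian for a Sylow $p$-subgroup $\overline{P}$ of $G/N$. Taking $K' = K(\zeta_{p})^{+}$ as in Remark \ref{rmk:base-change-in-particular}, one has $K'/\Q$ abelian, $K'$ totally real, and $p \nmid [K':K]$ (since $[K(\zeta_{p})^{+}:K]$ divides $(p-1)/2$). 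Theorem \ref{thm:pth-root-base-change} then delivers the EIMC for $L_{\infty}'/K = L(\zeta_{p})^{+}_{\infty}/K$, exactly as needed to feed into Corollary \ref{cor:BS-monomial}.

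The main obstacle in writing this out cleanly is purely bookkeeping: one must trace through each of \cite[Examples 4.21, 4.22, 4.23]{hybrid-EIMC}, read off the triple $(G,N,\overline{P})$, and verify that the Frobenius/hybrid structure recorded there really does transport through the base change with $K(\zeta_{p})^{+}$ without disturbing either the hybrid property (via Propositions \ref{prop:hybrid-basechange-down} and \ref{prop:hybrid-basechange-up}, compare Remark \ref{rmk:hybrid-Galois-basechange}) or the abelianness of the fixed field $((L^{+})^{N})^{\overline{P}}$ over $\Q$. There is no new ingredient beyond what has already been assembled in \S \ref{sec:hybrid-and-frobenius}--\S \ref{sec:unconditional}; the proof of Corollary \ref{cor:BS-holds} is therefore essentially a table of verifications, with Corollary \ref{cor:BS-monomial} as the single overarching input.
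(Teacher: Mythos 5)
Your proposal follows essentially the same logical route as the paper: verify monomiality of $G = \Gal(L^{+}/K)$ in each example, establish the EIMC for $L(\zeta_{p})^{+}_{\infty}/K$ via base change, and feed both into Corollary~\ref{cor:BS-monomial}. The paper simply packages the last two steps as an application of Theorem~\ref{thm:unconditional-BS} (whose proof is exactly the combination of Theorem~\ref{thm:pth-root-base-change} with $K' = K(\zeta_{p})^{+}$ and Corollary~\ref{cor:BS-monomial}), so the unrolled chain you describe is the same argument.

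The one place where your write-up is off the mark is the monomiality step. You assert that in each of the three cited examples the group $G$ is ``built from a Frobenius-type construction (of the form $U\rtimes V$ with an explicit abelian or supersoluble complement $V$)'' and conclude monomiality from Lemma~\ref{lem:monomial-Frobenius}. This is not accurate across all three examples. In particular, the group in Example~4.23 of \cite{hybrid-EIMC} is $S_{4}$, and the paper explicitly records in Example~\ref{ex:S4-A4-V4} that $S_{4}$ is \emph{not} a Frobenius group, so Lemma~\ref{lem:monomial-Frobenius} does not apply to it; one must instead observe directly that $S_{4}$ is monomial (e.g.\ because it is soluble of derived length $\leq 3$, or by explicit inspection of its irreducible characters). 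Similarly, for Example~4.22 the paper appeals to \cite[Chapter~2, Theorem~3.10]{MR655785} rather than to Lemma~\ref{lem:monomial-Frobenius}, again indicating that the Frobenius-complement criterion does not cover that case either. The vague fallback you gesture at (``more generally, one inflates and induces characters through the normal series provided by the hybrid structure\ldots'') does not constitute a proof: an extension of a monomial group by a monomial group need not be monomial, so a case-by-case check really is necessary. The paper handles each example separately, and you should too.
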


\begin{proof}
The group in \cite[Examples 4.21]{MR3749195} is monomial by an application of Lemma \ref{lem:monomial-Frobenius}
and the group in \cite[Example 4.22]{MR3749195} is monomial by an application of \cite[Chapter 2, Theorem 3.10]{MR655785}. 
For \cite[Examples 4.23]{MR3749195} it is straightforward to check that $S_{4}$ is a monomial group.
That the remaining hypotheses of Theorem \ref{thm:unconditional-BS} are satisfied in each case are verified in the cited examples themselves.
 \end{proof}

In certain situations, we can also remove the condition that $S_{p} \subseteq S$. 
To illustrate this, we conclude with the following result.

\begin{theorem}
Let $L/\Q$ be a finite Galois CM-extension of the rationals.
Suppose that $\Gal(L / \Q) \simeq \langle j \rangle \times G$, where $G =\Gal(L^{+}/\Q) = N \rtimes V$
is a Frobenius group with Frobenius kernel $N$ and abelian Frobenius complement $V$.
Suppose further that $N$ is an $\ell$-group for some prime $\ell$.
Then both $BS(L/\Q,S,p)$ and $B(L/\Q,S,p)$ are true for every odd prime $p$ and every finite set $S$ of places
of $\Q$ such that $ S_{\ram}(L/\Q) \cup S_{\infty} \subseteq S$.
\end{theorem}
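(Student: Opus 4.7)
The plan is to combine Corollary \ref{cor:BS-Frob}, which already handles the case $S_{p} \subseteq S$, with a descent argument in $S$ that covers the remaining case where $p \notin S$.

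First I would verify the hypotheses of Corollary \ref{cor:BS-Frob}. Since $V$ is abelian (hence monomial), Lemma \ref{lem:monomial-Frobenius} yields that $G = N \rtimes V$ is monomial. The field $(L^{+})^{N}$ is Galois over $\Q$ with group isomorphic to the abelian group $V$, so the hypothesis ``$(L^{+})^{U}/\Q$ is abelian'' holds with $U = N$ (and this is the situation where $K = \Q$ allows this simplification automatically). Since $N$ is an $\ell$-group, either $p \neq \ell$ (so $p \nmid |N|$) or $p = \ell$ (so $N$ is a $p$-group), and in either subcase Corollary \ref{cor:BS-Frob} delivers both $BS(L/\Q, S', p)$ and $B(L/\Q, S', p)$ for every $S' \supseteq S_{p} \cup S_{\ram}(L/\Q) \cup S_{\infty}$. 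If $p \in S$ already, we are done, so we may assume $p \notin S$; because $S_{\ram}(L/\Q) \subseteq S$ this forces $p$ to be unramified in $L/\Q$, and the Frobenius $\phi_{p} \in \Gal(L/\Q)$ is well-defined.

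Setting $S' := S \cup \{p\}$, the preceding step together with Theorem \ref{thm:EIMC-implies-BS} and Lemma \ref{lem:p-adic-vs-complex-Stickelberger} (applied using the monomiality of $G$) gives
\[
(\theta_{S'}^{T})^{\sharp} \in \Fitt^{\max}_{\Z_{p}[\Gal(L/\Q)]_{-}}((A_{L}^{T})^{\vee})
\]
for every $T$ satisfying $\Hyp(S', T)$. The standard Euler-factor identity for the complex Stickelberger element at the unramified prime $p$ reads $\theta_{S'}^{T} = \theta_{S}^{T} \cdot \nr(1 - \phi_{p})$, so after applying $^{\sharp}$ one obtains
\[
(\theta_{S}^{T})^{\sharp} \cdot \nr(1 - \phi_{p}^{-1}) \in \Fitt^{\max}_{\Z_{p}[\Gal(L/\Q)]_{-}}((A_{L}^{T})^{\vee}).
\]
The descent consists in removing this extra factor $\nr(1 - \phi_{p}^{-1})$, and this is the main obstacle.

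To overcome it, I would work at the level of the canonical complexes and compare $C_{S}^{\bullet}(L(\zeta_{p})^{+}_{\infty}/\Q)$ with $C_{S'}^{\bullet}(L(\zeta_{p})^{+}_{\infty}/\Q)$ through an Iwasawa-theoretic version of the localisation exact triangle of diagram \eqref{eqn:change-sigma-to-S}; the third term, after the Tate twist and passage to the minus part used throughout \S \ref{sec:proof-infinite-level}, should be the induced module $\bigl(\ind^{\mathcal{G}}_{\mathcal{G}_{w_{\infty}}} \Z_{p}(-1)\bigr)^{-}$ attached to a place $w_{\infty}$ above $p$, which is resolved by \eqref{eqn:resolution-zp(-1)} and whose Fitting invariant at infinite level is generated by $\xi_{p} = \nr(1 - \chi_{\mathrm{cyc}}(\phi_{w_{\infty}})\phi_{w_{\infty}})$. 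Multiplicativity of Fitting invariants in exact triangles (formula \eqref{eqn:fitt-of-sum-of-complexes-in-rel-K-zero}) then converts this into a relation between the Iwasawa-theoretic Fitting invariants of the two complexes, and Iwasawa codescent as in Proposition \ref{prop:Fitting-descent} specialises $\xi_{p}$ to exactly $\nr(1 - \phi_{p})$ at the finite level (since the cyclotomic character evaluates trivially on the relevant specialisation). This produces the desired cancellation with the Euler factor, yielding $(\theta_{S}^{T})^{\sharp} \in \Fitt^{\max}_{\Z_{p}[\Gal(L/\Q)]_{-}}((A_{L}^{T})^{\vee})$; Proposition \ref{prop:SBS_implies_BS} then concludes both $BS(L/\Q, S, p)$ and $B(L/\Q, S, p)$. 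The delicate technical point will be to rerun the cohomological constructions of \S \ref{sec:proof-finite-level} and \S \ref{sec:proof-infinite-level} with the prime $p$ treated as lying outside $S$ rather than inside it, and to verify that the resulting local contribution at $p$ matches the Euler factor precisely.
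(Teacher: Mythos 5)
Your first step coincides with the paper: both invoke Corollary \ref{cor:BS-Frob} to dispose of the case $S_{p} \subseteq S$, so the issue is only the case $p \notin S$, where $p$ is necessarily unramified in $L/\Q$.

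For the remaining case your route diverges sharply from the paper's and, as written, contains a genuine gap in the proposed descent. The idea of comparing $C_{S}^{\bullet}(L(\zeta_{p})^{+}_{\infty}/\Q)$ with $C_{S'}^{\bullet}(L(\zeta_{p})^{+}_{\infty}/\Q)$ via a localisation triangle runs into the problem that $p$ is \emph{ramified} in $L(\zeta_{p})^{+}_{\infty}/\Q$ (the cyclotomic $\Z_{p}$-extension is always ramified at $p$), so $p \in S_{\ram}(L(\zeta_{p})^{+}_{\infty}/\Q)$ and the canonical complex $C_{S}^{\bullet}$ as used in the EIMC is only defined for $S$ containing $p$. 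In particular there is no Frobenius $\phi_{w_{\infty}}$ at $p$ at the infinite level, so the resolution \eqref{eqn:resolution-zp(-1)} does not apply and the cone is not an induced module of the shape $\bigl(\ind_{\mathcal{G}_{w_{\infty}}}^{\mathcal{G}}\Z_{p}(-1)\bigr)^{-}$. Even setting this aside, the codescent computation does not produce the factor you want: the paper's identity $\phi(j_{\chi}(\prod_{v \in T}\xi_{v})) = \delta_{T}(0,\check{\chi})$ shows that $\xi_{v}$ specialises to $\det(1 - N(v)\phi_{v} \mid V_{\chi})$ (coming from the $T$-modification exponent $N(v)^{1-s}$), so the analogous object at $p$ would specialise to $\nr(1 - p\,\phi_{p})$, which is a unit in $\zeta(\Z_{p}[G]_{-})$ because $1 - p\phi_{p} \equiv 1 \bmod p$. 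The factor you actually need to remove from $\theta_{S'}^{T}$ is $\nr(1-\phi_{p})$ (coming from the $S$-truncation exponent $N(p)^{-s}$ evaluated at $s=0$), which is in general \emph{not} a unit. Your parenthetical that ``the cyclotomic character evaluates trivially on the relevant specialisation'' is exactly where this goes wrong. Finally, one should be cautious that the target of your descent, $(\theta_{S}^{T})^{\sharp} \in \Fitt_{\Z_{p}[G]_{-}}^{\max}((A_{L}^{T})^{\vee})$ with $p \notin S$, is a strictly stronger statement than the theorem itself, and there is no reason the Fitting invariant (which does not depend on $S$ at all) should absorb the missing Euler factor.

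The paper does something completely different for $p \notin S$: it establishes the minus-$p$-part of the ETNC for $(h^{0}(\Spec(L)), \Z[\Gal(L/\Q)])$ and then invokes the implication ETNC $\Rightarrow$ (non-abelian) Brumer--Stark from \cite[Theorem 5.3]{MR2976321}. The unramifiedness of $p$ in $L/\Q$ gives $L \not\subseteq L^{+}(\zeta_{p})$, which yields the Strong Stark conjecture at $p$ for odd characters via \cite{MR2805422}. When $p = \ell$ the vanishing of the relevant $\mu$-invariant (Ferrero--Washington) gives the minus-$p$-part of the ETNC by \cite{stickelberger}; when $p \neq \ell$ one observes that $\Z_{p}[\Gal(L/\Q)]$ is $N$-hybrid and deduces the minus-$p$-part of the ETNC from the abelian case $L^{N}/\Q$ (Burns--Greither) by restricting \cite[Theorem 4.3]{MR3461042} to minus parts. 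This route avoids entirely the need to remove an Euler factor from a Fitting invariant.
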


\begin{proof}
By Corollary \ref{cor:BS-Frob}, we see that the desired result holds when we assume in addition that $S$ contains $S_{p}=\{ p \}$.
Hence we may assume that $p$ is unramified in $L/\Q$ since  $S_{\ram}(L/\Q) \subseteq S$.
We claim that in this case the $p$-minus part of the equivariant Tamagawa number conjecture (ETNC) holds
for the pair $(h^{0}(\Spec(L)), \Z[\Gal(L/\Q)])$ which implies both $BS(L/K,S,p)$ and $B(L/K,S,p)$ by \cite[Theorem 5.3]{MR2976321}.
As $p$ is unramified in $L/\Q$, we have $L \not\subseteq L^{+}(\zeta_{p})$
and thus the Strong Stark conjecture at $p$ holds for each odd character of $\Gal(L / \Q)$  by \cite[Corollary 2]{MR2805422}.
Moreover, when $p = \ell$ then the relevant $\mu$-invariant vanishes by the theorem of Ferrero and Washington
\cite{MR528968} and \cite[Theorem 11.3.8]{MR2392026}. 
Hence the minus-$p$-part of the ETNC holds by \cite[Theorem 1.3]{MR3552493}. Now suppose that $p \neq \ell$. 
Then the group ring $\Z_{p}[G]$ is $N$-hybrid
by Proposition \ref{prop:frob-N-hybrid} and so $\Z_{p}[\Gal(L/\Q)]$ is also $N$-hybrid by Proposition \ref{prop:hybrid-basechange-up} since $p$ is odd.
As $L^{N} / \Q$ is abelian, the (minus-$p$-part of the) ETNC holds for the pair $(h^{0}(\Spec(L^{N})), \Z[\Gal(L^{N} / \Q)])$
by work of Burns and Greither \cite{MR1992015}. Thus the minus-$p$-part of the ETNC for the pair $(h^{0}(\Spec(L)), \Z[\Gal(L/\Q)])$ 
holds as well by restricting  \cite[Theorem 4.3]{MR3461042} to minus parts.
\end{proof}

\bibliography{non-abelian-Brumer-Stark-Bib}{}
\bibliographystyle{amsalpha}

\end{document}